\newcommand{\retainlabel}[1]{\label{#1}\sbox0{\ref{#1}}}
\newtheorem{thm}{Theorem}[section]
\newtheorem{lem}[thm]{Lemma}
\newtheorem{prop}[thm]{Proposition}
\newtheorem*{mainthm}{Main Theorem}
\newtheorem*{cor1}{Corollary}
\theoremstyle{definition}
\newtheorem*{defi}{Definition}
\newtheorem*{rmk}{Remark}
\newtheorem*{note}{Notations}
\newcommand{\ly}{\underset{*}{<}}
\newcommand{\gy}{\underset{*}{>}}
\newcommand{\Aeq}[1]{{\,(\doteqdot{#1}\ldots)}}
\newcommand{\tuta}[1]{{\widetilde{#1}}}
\newcommand{\EC}{\widehat{\mathbb{C}}}
\newcommand{\C}{\mathbb{C}}
\newcommand{\D}{\mathbb{D}}
\newcommand{\BH}{\mathbb{H}}
\newcommand{\N}{\mathbb{N}}
\newcommand{\R}{\mathbb{R}}
\newcommand{\V}{\mathbb{V}}
\newcommand{\Z}{\mathbb{Z}}
\newcommand{\OD}{\overline{\mathbb{D}}}
\newcommand{\OV}{\overline{\mathbb{V}}}
\newcommand{\MA}{\mathcal{A}}
\newcommand{\MF}{\mathcal{F}}
\newcommand{\MH}{\mathcal{H}}
\newcommand{\MMR}{\mathcal{R}}
\newcommand{\MU}{\mathcal{U}}
\newcommand{\ii}{\textup{i}}
\newcommand{\re}{\textup{Re}\,}
\newcommand{\im}{\textup{Im}\,}
\newcommand{\Exp}{\textup{Exp}}
\newcommand{\Mod}{\textup{mod}}
\newcommand{\SD}{\textsf{D}}
\newcommand{\Phia}{\Phi_{attr}}
\renewcommand\theequation{\thesection.\arabic{equation}} 
\makeatletter\@addtoreset{equation}{section}\makeatother %
\begin{document}

\author{FEI YANG}
\address{Department of Mathematics, Nanjing University, Nanjing 210093, P. R. China}
\email{yangfei@nju.edu.cn}

\title[Parabolic renormalization for local degree three]{Parabolic and near-parabolic renormalizations for local degree three}

\begin{abstract}
The invariant class under parabolic and near-parabolic renormalizations constructed by Inou and Shishikura has been proved to be extremely useful in recent years. It leads to several important progresses on the dynamics of certain holomorphic maps with critical points of local degree two. In this paper, we construct a new class consisting of holomorphic maps with critical points of local degree \textit{three} which is invariant under parabolic and near-parabolic renormalizations.
As potential applications, some results of cubic unicritical polynomials can be obtained similarly as the quadratic case. For example, the existence of cubic unicritical Julia sets with positive area, the characterizations of the topology and geometry of cubic irrationally indifferent attractors etc.
\end{abstract}

\subjclass[2020]{Primary 37F25; Secondary 37F10}

\keywords{Parabolic renormalization; near-parabolic renormalization; Inou-Shishikura invariant class}

\date{\today}

\thanks{This work was supported by the NSFC (Nos.\,12222107 and 12071210), the NSF of Jiangsu Province (No.\,BK20191246) and the CSC program (2014/2015).}

\maketitle


\section{Introduction}\label{introduction}

Let $f(z)$ be a non-linear holomorphic function defined in a neighborhood of a fixed point $z_0\in\C$. The number $f'(z_0)$ is called the \textit{multiplier} of $z_0$. If $f'(z_0)$ is a root of the unity, then $z_0$ is called a \emph{parabolic fixed point} of $f$. In particular, $z_0$ is called $1$-\emph{parabolic} if $f'(z_0)=1$ and it is called \emph{non-degenerate} if further $f''(z_0)\neq 0$.
The dynamical behavior in the parabolic basin is simple but it may become extremely complicated once a perturbation on the map $f$ is made. The main tools to analyze such complicated phenomena are Fatou coordinates and horn maps, which were developed by Douady-Hubbard \cite{DH84, DH85b}, Lavaurs \cite{Lav89}, Inou-Shishikura \cite{IS08} and others.

The definitions of parabolic and near-parabolic renormalizations were introduced firstly in \cite{Shi98}, where the Hausdorff dimensions of some quadratic Julia sets and the Mandelbrot set were studied. Near-parabolic renormalization is also called \emph{cylinder renormalization}, which was introduced by Yampolsky in the study of analytic circle homeomorphisms with one critical point \cite{Yam03} (see also \cite{LY14}). In \cite{Shi98}, an invariant class under the parabolic renormalization operator was given. However, in order to iterate the near-parabolic renormalization operator infinitely many times, the class defined in \cite{Shi98} cannot serve for the purpose anymore. In 2008, Inou and Shishikura introduced an infinite-dimensional class of complex analytic parabolic maps, and proved that such class is invariant under the parabolic renormalization operator and a perturbation of this class is invariant under the near-parabolic renormalization operator \cite{IS08}.

Each element in the Inou-Shishikura class is a holomorphic map having exactly one critical point and one critical value. Moreover, the critical point is simple (i.e., with local degree two). This invariant class has led to many important progresses on the dynamics of quadratic polynomials. We will mention them in a moment. In order to study the dynamics of unicritical holomorphic maps with higher degrees, a natural problem is to extend Inou-Shishikura's class to higher degrees. Our main goal in this paper is to do this for the cubic unicritical case.

\subsection{The  invariant class and the main result}

The most difficult part to extend Inou-Shishikura's results to higher degrees is to define new invariant classes (especially the choice of the domains of definitions of the maps in the class). Therefore, before stating our main result, we first introduce a class.

\begin{defi}[{The class $\MF_1$}]
Let $P$ be a degree $5$ rational map:
\begin{equation}\label{equ:P}
P(z)=\frac{z\,(1+\frac{z}{\mu})^4}{(1+z)^2}, \text{\quad where }\mu=11-4\sqrt{6}>1.
\end{equation}
Then $P$ has a parabolic fixed point at $0$ with multiplier $1$, and $4$ critical points at $-\mu$, $-1$, $\infty$ and $cp_P=1-\tfrac{2}{3}\sqrt{6}\in(-1,0)$, where $cp_P$ has local degree \emph{three} (see \S \ref{subsec:P}). It is easy to see that $P:\C\setminus\{-1\}\to\C$ has two finite critical values $0=P(-\mu)$ and $cv_P=P(cp_P)$.
Let $V$ be a domain in $\C\setminus\{-1\}$ containing $0$. Define
\begin{equation}
\mathcal{F}_1:=
\left\{f=P\circ\varphi^{-1}:\varphi(V)\to\C
\left|
\begin{array}{l}
\varphi:V\to\C \text{ is univalent},\\
\varphi(0)=0\text{ and }\varphi'(0)=1
\end{array}
\right.
\right\}.
\end{equation}
If $f\in\mathcal{F}_1$, then $0$ is a $1$-parabolic fixed point of $f$. If $cp_P\in V$, then $cp_f:=\varphi(cp_P)$ is a critical point of $f$ with local degree \textit{three} and $cv:=cv_P$ is a critical value of $f$.
\end{defi}

\begin{mainthm}[{Invariance of $\mathcal{F}_1$}]\label{thm-main}
There exists a Jordan disk $V\subset\C$ containing $0$ and $cp_P$ such that $\mathcal{F}_1$ is invariant under the \emph{parabolic renormalization} $\MMR_0$:
\begin{equation}
\mathcal{R}_0(\mathcal{F}_1)\subset\mathcal{F}_1.
\end{equation}
That is, for any $f\in\mathcal{F}_1$, $\mathcal{R}_0 f$ is well-defined so that $\mathcal{R}_0 f=P\circ\psi^{-1}\in\mathcal{F}_1$. Moreover, there exists a simply connected domain $V'\subset\C$ (independent of $f$ and $\psi$) containing $\overline{V}$ such that $\psi$ extends to a univalent function from $V'$ to $\C$.
\end{mainthm}

The explicit definitions of $V$ and $V'$ will be given in \S\ref{sec-definitions} (see Figure \ref{Fig-V}). The property $\overline{V}\subset V'$ is important. This implies that the new domain of definition of $\psi$ is strictly larger than that of the original $\varphi$.
For the definition of parabolic renormalization $\MMR_0$, see \S\ref{subsec-ParaRenorm}.

If we require further that every normalized univalent map $\varphi:V\to\C$ in the definition of $\MF_1$ has a quasiconformal extension to $\C$ (and the new class is denoted by $\widetilde{\MF}_1$), then one can designate a one-to-one correspondence between $\widetilde{\MF}_1$ and the Teichm\"{u}ller space of $\C\setminus\overline{V}$ as in \cite[\S 6]{IS08}.
Let $d(\cdot,\cdot)$ be the distance on $\widetilde{\MF}_1$ induced from the Teichm\"{u}ller distance. By following \cite[\S 6]{IS08}, we have the following immediate consequence of the Main Theorem.

\begin{cor1}
The parabolic renormalization operator $\MMR_0$ is a uniform contraction:
\begin{equation}
d(\MMR_0(f),\MMR_0(g))\leq\lambda\, d(f,g) \text{\quad for~} f,g\in\widetilde{\MF}_1,
\end{equation}
where $\lambda=e^{-2\pi\,\Mod(V'\setminus\overline{V})}<1$.
\end{cor1}

Indeed, Inou and Shishikura proved the uniform contraction by using Royden-Gardiner's non-expanding theorem on Teichm\"{u}ller distance, and the proof is unrelated to the specific formulas of the maps in the invariant class. The unique requirement is $\overline{V}\subset V'$, which is stated in the Main Theorem.

By using a completely similar argument of the continuity of the horn maps to \cite[\S\,7]{IS08}, the \textit{near-parabolic renormalization} $\MMR$ can be defined on the class
\begin{equation}
e^{2\pi\ii\,\MA(\alpha_*)}\times\MF_1=\{e^{2\pi\ii\alpha}h(z) \,|\, \alpha\in\MA(\alpha_*) \text{~and~} h\in\MF_1\},
\end{equation}
where $\MA(\alpha_*)=\{\alpha\in\C\,|\, 0<|\alpha|<\alpha_*, |\arg\alpha|<\pi/4 \text{~or~} |\arg(-\alpha)|<\pi/4\}$
and $\alpha_*>0$ is a small number.
In particular, $\MMR$ can be expressed as a skew product $\MMR:(\alpha,h)\mapsto(-1/\alpha,\MMR_\alpha h)$, where $\MMR_\alpha$ is the renormalization in the \textit{fiber direction}. The class $\MF_1$ is invariant under the fiber renormalization, and the near-parabolic renormalization can be acted infinitely many times on any map $e^{2\pi\ii\alpha}h\in e^{2\pi\ii\,\MA(\alpha_*)}\times\MF_1$ if $\alpha$ is of \textit{high type}, i.e., the coefficients of the continued fraction expansion of $\alpha$ are all larger than $1/\alpha_*$.
For more details on the near-parabolic renormalization $\MMR$ and the renormalization $\MMR_\alpha$ in the fiber direction, we refer to \cite{IS08}.

The proof of the Main Theorem is strongly inspired by the method of Inou and Shishikura \cite{IS08}. We will follow the framework of their paper and do some numerical calculations.
We need to check a number of inequalities by computer. These inequalities are all elementary functions evaluated at \textit{explicit} values. We first check them by \textsf{Maple} and \textsf{Mathematica} numerically, and then use \textsf{INTLAB}\footnote{Inou and Shishikura also used INTLAB to check the estimates rigorously. This tool can be found in \url{https://www.tuhh.de/ti3/rump/intlab/}.} on \textsf{MATLAB} to verify the estimates rigorously with interval arithmetics.
See \cite{Yan24M} for specific calculations.

Compared to \cite{IS08}, the main difficulty in the calculations here is the appearance of a pole in the model map $P$ (note that the corresponding model in \cite{IS08} is a cubic polynomial). More differences between the proofs of the Main Theorem and \cite[Main Theorem 1]{IS08} can be found as the reader gets into the details.

In our previous version \cite{Yan16}, an invariant class of holomorphic maps with local degree three has already been constructed and the proof also relies on numerical estimates. However, because of the complexity of the model map there, some numerical calculations need to compute the maximum and minimum of continuous functions on closed intervals, which cause the results there not rigorous. The class we choose in this paper is simpler and the numerical calculations can be checked rigorously. A key idea behind the construction of this new model map $P$ is to put all the preimages of the parabolic fixed point on the real axis for which all calculations can be carried out precisely.

\subsection{Applications}

The construction of the Inou-Shishikura class aims to study the dynamics of holomorphic maps whose critical points have local degree two. As the first remarkable application, Buff and Ch\'{e}ritat proved the existence of quadratic Julia sets with positive area and one important ingredient in the proof is the fine control of the post-critical sets \cite{BC12}.

Cheraghi developed several elaborate analytic techniques and carried out a quantitative analysis of Inou-Shishikura's renormalization scheme in \cite{Che13} and \cite{Che19}. In the past few years, these techniques led to some of the recent major progresses on the dynamics of quadratic polynomials. For examples:
\begin{itemize}
\item The Marmi-Moussa-Yoccoz conjecture for rotation numbers of high type has been proved in \cite{CC15};
\item The Feigenbaum Julia sets with positive area (which is very different from the examples in \cite{BC12}) have been found in \cite{AL22};
\item The local connectivity of the Mandelbrot set at some infinitely satellite renormalizable points was proved in \cite{CS15};
\item The statistical properties of the dynamics of certain quadratic polynomials was characterized in \cite{AC18};
\item The topological structure and the Hausdorff dimension of the post-critical sets of the maps in Inou-Shishikura's class with an irrationally indifferent fixed point have been studied in \cite{Che22b}, \cite{Che23}, \cite{SY24} and \cite{CDY20} respectively;
\item The topological structure of the post-critical sets of some Feigenbaum maps was characterized in \cite{CP22} and \cite{CP23}.
\end{itemize}

Some other consequences of Inou-Shishikura's near-parabolic renormalization theory include: Self-similarity of the boundaries of high type Siegel disks in the quadratic family \cite{CY16}, the existence of non-renormalizable Cremer cubic polynomials having Julia sets with positive area \cite{QQ20}, and the existence of cubic rational maps having smooth degenerate Herman rings \cite{Yan24} etc.

As one potential application of the class $\MF_1$ defined in this paper, it is likely that one can prove the existence of Julia sets with positive area for cubic unicritical polynomials by following Buff and Ch\'{e}ritat. We also believe that the techniques developed by Cheraghi in \cite{Che13} and \cite{Che19} based on Inou-Shishikura's class can be transplanted to the class $\MF_1$ without difficulty. Therefore, the results mentioned above may be applied to cubic unicritical polynomials.

Recently, Ch\'{e}ritat extended the parabolic and near-parabolic renormalization theory to all finite degrees with several critical points and only one critical value (see \cite{Che22}). In the case of degree two, every map in Inou-Shishikura's class has exactly one critical point. By using the language in \cite{Che22}, the ``structure" of every map in the Inou-Shishikura class is the restriction of the ``structure" of some map in Ch\'{e}ritat's class. Hence, the class (with local degree two) found in \cite{Che22} is smaller than Inou-Shishikura's. For the same reason, the class $\MF_1$ defined in this paper is larger than the class in \cite{Che22} (the local degree three case). This implies that our results can be applied to more holomorphic maps.

On the other hand, both the class defined in this paper and the class defined by Ch\'{e}ritat can be used to study the cubic unicritical polynomials $z\mapsto z^3+c$. Although these cubic polynomials are not contained in $\MF_1$, but their first parabolic renormalizations are (see \S\ref{sec-subcover}).

We would like to mention that recently, Kapiamba proved that for all roots of unity, there exist corresponding classes which are invariant under the parabolic or near-parabolic renormalizations \cite{Kap23}. The related renormalization theory can be used to study the quadratic polynomials with an irrationally indifferent fixed point whose rotation number is of \textit{valley-type}.

\begin{note}
We use $\N$, $\R$ and $\C$ to denote the set of all natural numbers, real numbers and complex numbers, respectively. The Riemann sphere, the upper (lower) half plane and the unit disk are denoted by $\EC=\C\cup\{\infty\}$, $\BH_\pm=\{z\in\C:\pm\,\im z>0\}$ and $\D=\{z\in\C:|z|<1\}$ respectively. A round disk is denoted by $\D(a,r)=\{z\in\C:|z-a|<r\}$ and $\OD(a,r)$ is its closure. We use $\C^*=\C\setminus\{0\}$ and $\R_\pm=\{x\in\R:\pm\,x>0\}$ to denote the punctured complex plane and the positive (negative) real line respectively. We use $d_X(\cdot,\cdot)$ to denote the Poincar\'{e} distance in the hyperbolic Riemann surface $X$ and $\D_X(a,r)=\{z\in X:d_X(z,a)<r\}$ to denote the hyperbolic disk in $X$. For a complex number $z\neq 0$, $\arg\,z$ and $\log\,z$ denote a suitably chosen branch of the argument and logarithm respectively.
\end{note}

\section{Parabolic renormalization and preliminaries of $P$}\label{sec:ParaRenorm}

In this section, we first review the definitions of Fatou coordinates, horn maps and parabolic renormalization. For more details, see \cite{Shi00a}. After that we give some preliminary dynamical properties of the rational map $P$.

\subsection{Fatou coordinates and horn maps}\label{subsec-ParaRenorm}

Let $f(z)=z+a_2 z^2+\mathcal{O}(z^3)$ be a holomorphic function with $a_2\neq 0$, i.e., $f$ has a non-degenerate $1$-parabolic fixed point at the origin. After a coordinate change $w=-\frac{1}{a_2 z}$, the parabolic fixed point $0$ moves to $\infty$ and the dynamics of $f$ in this new coordinate is
\begin{equation}
F(w)=w+1+\tfrac{b_1}{w}+\mathcal{O}(\tfrac{1}{w^2})
\end{equation}
near $\infty$, where $b_1\in\C$ is a constant\footnote{\,If $f(z)=z+a_2 z^2+a_3 z^3 + \mathcal{O}(z^4)$ with $a_2\neq 0$, then a direct calculation shows that $b_1=1-{a_3}/{a_2^2}$. This constant is called the \textit{iterative residue} of $f$.} depending on $f$.

\begin{thm}[{and the definitions of Fatou coordinates and horn maps, \cite{Shi00a}}]\label{thm-Fatou-horn}
For a sufficiently large $L>0$, there are univalent maps $\Phi_{attr}=\Phi_{attr,F}:\{w\in\C:\re w-L>-|\im w|\}\to\C$ and $\Phi_{rep}=\Phi_{rep,F}:\{w\in\C:\re w+L<|\im w|\}\to\C$ such that
\begin{equation}
\Phi_s(F(w))=\Phi_s(w)+1 ~~(s=attr,rep)
\end{equation}
holds in the region where both sides are defined. These two maps $\Phi_{attr}$ and $\Phi_{rep}$ are unique up to an additive constant. They are called the \emph{attracting} and \emph{repelling Fatou coordinates} respectively.

In the region $V_\pm=\{w\in\C:\pm\,\im w>|\re w|+L\}$, both Fatou coordinates are defined. The \emph{horn map} $E_F$ in $\Phi_{rep}(V_\pm)$ is defined as
\begin{equation}
E_F:=\Phi_{attr}\circ\Phi_{rep}^{-1}.
\end{equation}
There exists a more larger $L'>0$ such that $E_F$ extends holomorphically to $\{z\in\C:|\im z|> L'\}$ and satisfies $E_F(z+1)=E_F(z)+1$. Moreover, there are constants $c_{upper}$ and $c_{lower}$ such that
\begin{equation}
\lim_{\im z\to+\infty}E_F(z)-z= c_{upper}  \text{\quad and\quad} \lim_{\im z\to-\infty}E_F(z)-z= c_{lower},
\end{equation}
where $c_{lower}-c_{upper}=2\pi\ii b_1$.
\end{thm}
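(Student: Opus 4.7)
The plan is to construct the attracting Fatou coordinate by correcting an approximate conjugacy with a convergent iterative series, and to obtain the repelling one symmetrically using inverse iterates. Near $\infty$, $F(w)=w+1+b_1/w+O(1/w^2)$, and the ansatz $\tilde\Phi(w):=w-b_1\log w$ (for a single-valued branch of $\log$ on the simply connected attracting petal $\{\re\,w-L>-|\im\,w|\}$) satisfies $H(w):=\tilde\Phi(F(w))-\tilde\Phi(w)-1=O(1/w^2)$. Since on the petal $F^n(w)=w+n+O(\log n)$, the series
\[
\eta(w):=-\sum_{n\geq 0}H(F^n(w))
\]
converges uniformly on compacta, and $\Phi_{attr}:=\tilde\Phi+\eta$ satisfies $\Phi_{attr}(F(w))=\Phi_{attr}(w)+1$; univalence follows from $\Phi_{attr}'(w)\to 1$ as $w\to\infty$ in the petal. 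The construction of $\Phi_{rep}$ on $\{\re\,w+L<|\im\,w|\}$ is identical up to taking the local inverse of $F$ near $\infty$ and summing along the backward orbit.

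For uniqueness, if $\Phi_1,\Phi_2$ both conjugate $F$ to $T:z\mapsto z+1$, then $g:=\Phi_1\circ\Phi_2^{-1}$ commutes with $T$; since both coordinates share the asymptotic $w-b_1\log w+O(1)$, the difference $g(z)-z$ is bounded and $T$-periodic, hence descends to a bounded holomorphic function on a neighborhood of $\infty$ in the cylinder $\C/\Z$ and is constant by Liouville. The horn map $E_F:=\Phi_{attr}\circ\Phi_{rep}^{-1}$ is therefore well-defined and univalent on $\Phi_{rep}(V_\pm)$, and the $\Z$-equivariance
\[
E_F(z+1)=\Phi_{attr}(F(\Phi_{rep}^{-1}(z)))=E_F(z)+1
\]
follows immediately from the functional equations.

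To extend $E_F$ to the full strip $\{|\im\,z|\geq L'\}$, I would use this periodicity: the integer translates of $\Phi_{rep}(V_+)$ cover a neighborhood of $+\ii\infty$, and the rule $E_F(z):=E_F(z-n)+n$ produces a consistent holomorphic extension. For the asymptotic constants, the key observation is that in $V_+$ both Fatou coordinates can be written using a common branch of $\log w$ (with $\arg w\approx\pi/2$), so $\Phi_{attr}-\Phi_{rep}$ is bounded, $F$-invariant, and descends to a holomorphic function on a punctured disk at $+\ii\infty$ on the cylinder $V_+/F$; by the removable singularity theorem it extends with a finite limit, giving $E_F(z)-z\to c_{upper}$. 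The same argument in $V_-$ yields $c_{lower}$, but because the repelling region encircles the puncture at $\infty$ while the attracting region does not, the branch of $\log w$ that $\Phi_{rep}$ must use in $V_-$ differs from the one in $V_+$ by $2\pi\ii$, whereas the branch of $\log w$ used by $\Phi_{attr}$ is continuous from $V_+$ to $V_-$; the logarithmic correction $-b_1\log w$ therefore contributes an extra $2\pi\ii b_1$ to the asymptotic difference, yielding $c_{lower}-c_{upper}=2\pi\ii b_1$.

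The main analytic obstacle is the bootstrap estimate $F^n(w)=w+n+O(\log n)$ uniform over compact subsets of the petal, needed to control the convergence of $\eta$ and to show that the orbits remain in the petal; this is classical but requires delicate orbit tracking using the approximate coordinate $\tilde\Phi$. The other delicate point is the branch-of-$\log$ accounting in the final identity $c_{lower}-c_{upper}=2\pi\ii b_1$, where the different topologies of the attracting and repelling domains relative to the puncture at $\infty$ must be used to pin down the correct sign.
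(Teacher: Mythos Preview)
The paper does not give its own proof of this theorem: Section~2 is explicitly a review section, and immediately before the statement the author writes ``For their proofs and more details, see \cite{Shi98,Shi00} and \cite{IS08}.'' So there is no in-paper argument to compare against.

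Your outline is the classical construction that appears in those references (and in standard expositions such as Milnor's book): build the approximate coordinate $\tilde\Phi(w)=w-b_1\log w$, check $H(w)=\tilde\Phi(F(w))-\tilde\Phi(w)-1=O(1/w^2)$, sum $H$ along forward (resp.\ backward) orbits to get $\Phi_{attr}$ (resp.\ $\Phi_{rep}$), and read off uniqueness, the $\Z$-equivariance of $E_F$, and the asymptotic constants from the logarithmic term. The branch-of-$\log$ bookkeeping you describe for $c_{lower}-c_{upper}=2\pi\ii b_1$ is exactly the mechanism used in \cite{Shi00} and \cite{IS08}: the repelling domain wraps past the negative real axis so that its single-valued branch of $\log w$, continued from $V_+$ to $V_-$ through the left, picks up $2\pi\ii$ relative to the branch on the attracting domain, and the coefficient $-b_1$ in $\tilde\Phi$ converts this into the stated identity. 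One small caution: be careful with the sign convention---whether the extra $2\pi\ii$ is attributed to $\Phi_{rep}$ or to $\Phi_{attr}$ depends on which branch you fix first, and getting $c_{lower}-c_{upper}$ (rather than its negative) requires tracking this consistently.
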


Since the Fatou coordinates are unique up to an additive constant, it is convenient to make a normalization for them. In this paper, the attracting Fatou coordinate is \emph{normalized} by $\Phi_{attr}(cv)=1$ for a marked critical value $cv$ (we will see later such a critical value is always defined). For the repelling Fatou coordinate, we normalize it such that $c_{upper}=0$, i.e.,
\begin{equation}
E_F(z)=z+o(1) \text{\quad as\quad} \im z\to+\infty.
\end{equation}

\begin{defi}[{Parabolic renormalization}]
Let $\Exp(z)=e^{2\pi\ii z}:\C/\Z\to\C^*$ be an isomorphism. The \textit{parabolic renormalization}\footnote{This is the parabolic renormalization for the upper end of the cylinder $\C/\Z$, and the parabolic renormalization  for the lower end can be defined similarly. See \cite[\S 3]{IS08}.} of $f$ is defined as
\begin{equation}
\MMR_0 f:=\Exp\circ E_f\circ (\Exp)^{-1},
\end{equation}
where $E_f=E_F$ is the horn map of $f$, defined in Theorem \ref{thm-Fatou-horn} and normalized as $E_f (z) = z+o(1)$ as $\im z\to+\infty$.
Then $\MMR_0 f$ extends holomorphically to $0$, $(\MMR_0 f)(0) = 0$ and $(\MMR_0 f)'(0) = 1$. So $\MMR_0 f$ has again a $1$-parabolic fixed point at $0$.
Note that $\MMR_0 f$ is a holomorphic germ at this moment. In the following we will extend the domain of definition of this germ, so that it contains a critical point and has the desired covering structure.
\end{defi}

\subsection{Preliminaries of $P$}\label{subsec:P}

In this subsection we give some basic properties of the degree $5$ rational map
\begin{equation}
P(z)=\frac{z(1+\frac{z}{\mu})^4}{(1+z)^2},
\end{equation}
where\footnote{In this paper, the approximate value of a real number $x$ is indicated as $x\doteqdot{0.1234}\ldots$, which means that $x\in[0.1234,0.1235]$.} $\mu=11-4\sqrt{6}=(2\sqrt{2}-\sqrt{3})^2\,\Aeq{1.2020}$.

\begin{prop}[see Figure \ref{Fig-P:chessboard}]\label{prop:P}
The map $P$ has the following properties:
\begin{enumerate}
\item $P$ has exactly $4$ critical points $-\mu$, $-1$, $\infty$, $cp_P$ (whose local degrees are $4$, $2$, $3$ and $3$ respectively) and $3$ critical values $0=P(-\mu)$, $\infty=P(-1)=P(\infty)$, $cv_P=P(cp_P)$, where
\begin{equation}
cp_P=-\sqrt{\tfrac{\mu}{3}}=1-\tfrac{2}{3}\sqrt{6}\Aeq{-0.6329} \text{ and } cv_P=-\tfrac{16}{3(8\sqrt{6}+3)}\Aeq{-0.2360};
\end{equation}
\item $P^{-1}(cv_P)=\{cp_P,\nu_1^P,\nu_2^P\}$, where
\renewcommand\theequation{\thesection.\arabic{equation}*}
\begin{align}
\nu_1^P&=9\sqrt{6}-\tfrac{47}{2}-\tfrac{1}{2}\sqrt{996\sqrt{6}-2439}\,\Aeq{-1.8704},  \text{ and}\\
\nu_2^P&=9\sqrt{6}-\tfrac{47}{2}+\tfrac{1}{2}\sqrt{996\sqrt{6}-2439}\,\Aeq{-1.0387};
\end{align}
\item $-2<\nu_1^P<-\mu<\nu_2^P<-1<cp_P<cv_P<0$;
\item $P$ has a super-attracting fixed point $\infty$ and a $1$-parabolic fixed point $0$;
\item $P$ has exactly two periodic Fatou components $B_\infty$ and $B_0$, which contain the critical points $\infty$ and $cp_P$ respectively;
\item $P^{-1}(B_\infty)$ consists of two connected components and one of them is a bounded Fatou component containing the pole $-1$;  and $P^{-1}(B_0)$ consists of three connected components: one is $B_0$ and the rest two are Fatou components containing $\nu_1^P$ and $\nu_2^P$ respectively.
\end{enumerate}
\end{prop}

\begin{figure}[!htpb]
  \setlength{\unitlength}{1mm}
  \centering
  \includegraphics[width=0.97\textwidth]{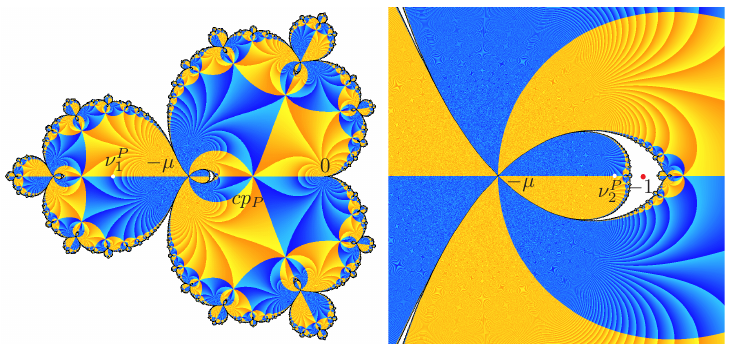}
  \caption{Left: The immediate parabolic basin of $P$ contains the critical point $cp_P$ of local degree $3$, where the chessboard structure and some special points are marked. Right: A zoom of the left near the critical point $-\mu$ and the pole $-1$.}
  \label{Fig-P:chessboard}
\end{figure}

\begin{proof}
(a) The statement follows from the following calculation:
\begin{equation}
P'(z)=\frac{3}{\mu}\left(\frac{1+\tfrac{z}{\mu}}{1+z}\right)^3\left(z+\sqrt{\tfrac{\mu}{3}}\,\right)^2.
\end{equation}

(b) Consider the equation $P(z)=P(cp_P)=cv_P$ of degree $5$ and note that such equation has the $3$ same roots $cp_P$ (counting the multiplicity). The rest two roots can be obtained by comparing the coefficients of the terms of orders $3$ and $4$.

\medskip
(c) and (d) are immediate consequences of (a) and (b).

\medskip
(e) Note that every immediate attracting or parabolic basin of a rational map with degree at least two must contain at least one critical point. Since $P(-\mu)=0$ and $P(-1)=\infty$ are fixed, it follows that the critical point $cp_P$ is contained in the immediate parabolic basin of $0$. Note that $B_\infty$ contains exactly one critical value at $\infty$. Thus $B_\infty$ is simply connected. Since the Julia set $J(P)$ is symmetric about the real axis, $-\mu<-1<0$ and $-\mu$, $0$ are contained in $J(P)$, we conclude that $-1\not\in B_\infty$. Therefore, each of $B_0$ and $B_\infty$ contains exactly one critical point (without counting multiplicity). Hence $B_0$ is also simply connected. This implies that all the Fatou components of $P$ are simply connected and $J(P)$ is connected.

\vskip0.1cm
(f) Let $B_1$ be the Fatou component containing $-1$. Then $B_1$ is the bounded connected component of $P^{-1}(B_\infty)$ which is different from $B_\infty$. Since the degree of the restriction of $P$ on $B_0$ is $3$, by (c) there exist other two Fatou components containing $\nu_1^P$ and $\nu_2^P$ respectively, where $P(\nu_1^P)=P(\nu_2^P)=cv_P\in B_0$.
\end{proof}

\section{Outline of the proof of the Main Theorem}\label{sec-definitions}

For the proof of the Main Theorem, the main goal is to find $\psi$ such that
\begin{equation}
\MMR_0 f=\Psi_0\circ E_f\circ\Psi_0^{-1}=P\circ\psi^{-1},
\end{equation}
where $\Psi_0(z)=cv_P\,\Exp (z)$ is the modified exponential map. We will adapt the main strategy in \cite[\S 5.A]{IS08} to find $\psi$ and work it in details in the following sections. In this section we give an outline of the proof of the Main Theorem.


\begin{defi}[{Some intermediate mappings and the mapping $Q$}]
Define
\begin{equation}\label{equ-Q-defi}
Q(\zeta)=\zeta\,\frac{(1+\frac{1}{\zeta})^6(1-\frac{1}{\zeta})^4}{\big(1+(2-\frac{4}{\mu})\frac{1}{\zeta}+\frac{1}{\zeta^2}\big)^4},
\quad \psi_1(\zeta)=-\frac{4\zeta}{(1+\zeta)^2},
\quad \psi_0(\zeta)=-\frac{4}{\zeta},
\end{equation}
where $\mu=11-4\sqrt{6}$. One can obtain that (see \S\ref{sec-P-to-Q}) $Q$ is related to $P$ by
\begin{equation}\label{equ:Q-relate-P}
Q=\psi_0^{-1}\circ P\circ\psi_1.
\end{equation}
The map $Q$ has a $1$-parabolic fixed point at $\infty$. Note that $\psi_1:\EC\setminus\overline{\D}\to\C\setminus(-\infty,-1]$ is conformal. We are mainly interested in the properties of $Q$ in $\EC\setminus\overline{\D}$. The motivation of introducing $Q$ is to use the dynamics of $Q$ to study $P$.
\end{defi}

\begin{defi}[{$V'=U_\eta^P$ and $U_\eta^Q$, see Figure \ref{Fig-V}}]
Let $\eta>0$ and $cv_P=-\tfrac{16}{3(8\sqrt{6}+3)}$ $\Aeq{-0.2360}$ (which is a critical value of $P$). Define
\begin{equation}\label{equ:U-eta-P}
V'=U_\eta^P:=P^{-1}\big(\D(0,|cv_P|e^{2\pi\eta})\big) \setminus \big((-\infty,-1]\cup B\big),
\end{equation}
where $B$ is the connected component of $P^{-1}\big(\OD(0,|cv_P|e^{-2\pi\eta})\big)$ containing $-\mu$.
Note that $P^{-1}\big(\D(0,|cv_P|e^{2\pi\eta})\big)$ is an annulus since $\EC\setminus \overline{\D}(0,|cv_P|e^{2\pi\eta})$ contains exactly one critical value at $\infty$ (see Proposition \ref{prop:P}(a)).
Let $U_\eta^Q:=\psi_1^{-1}(U_\eta^P)\subset\EC\setminus\overline{\D}$.
\end{defi}

\begin{defi}[{Disk $\mathscr{D}$ and domain $V$}]
Let $a_0=-0.06$ and $r_0=1.07$. We define
\begin{equation}\label{equ:disk}
\mathscr{D}:=\D(a_0,r_0).
\end{equation}
Note that $\OD\subset \mathscr{D}$. We define $V:=\psi_1\big(\EC\setminus\overline{\mathscr{D}}\big)$.
\end{defi}

\begin{prop}[{Relation between $V$ and $V'$}]\label{prop-V-V'}
Let $\eta=3$. Then $\EC\setminus \mathscr{D}\subset U_\eta^Q\subset\EC\setminus\OD$. Therefore, we have
\begin{equation}
\overline{V}\subset V'=U_\eta^P\subset\C\setminus(-\infty,-1].
\end{equation}
\end{prop}

The proof of this proposition will be given in \S\ref{sec-esti-Q-I}. The constant $\eta=3$ and the disks $\mathscr{D}$ will be used throughout the paper.

\begin{figure}[!htpb]
  \setlength{\unitlength}{1mm}
  \centering
  \includegraphics[width=0.95\textwidth]{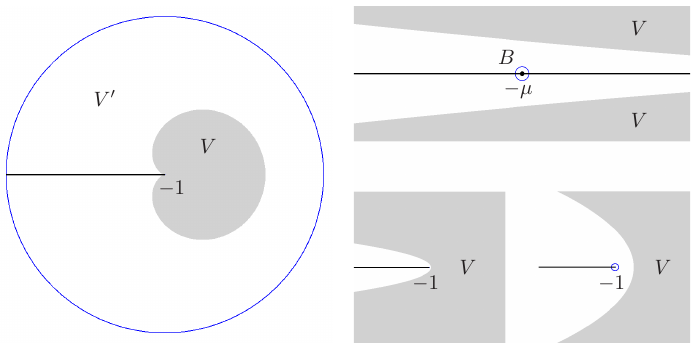}
  \caption{The domains $V$ (the gray part), $V'=U_\eta^P$ with $\eta=3$ and their zooms near $-\mu=4\sqrt{6}-11\Aeq{-1.2020}$ and $-1$. The outer boundary of $V'$ looks like a circle. The half widths of these pictures are $450$, $0.08$, $10^{-3}$ and $10^{-4}$ respectively.}
  \label{Fig-V}
\end{figure}

\begin{defi}[{Classes $\MF_2^P$ and $\MF_1^Q$}]
From now on, we use $\MF_1^P$ to denote $\MF_1$. We define two more classes of maps:
\begin{equation}
\begin{split}
\MF_2^P:=  &~ \big\{ f=P\circ\varphi^{-1}\,\big|\,\varphi:V'\to\C \text{~is normalized and univalent}\big\}, ~ \text{and}\\
\MF_1^Q:= &~ \big\{ F=Q\circ\varphi^{-1}\,\big|\,\varphi:\EC\setminus\overline{\mathscr{D}}\to\EC\setminus\{0\} \text{ is normalized and univalent}\big\}.
\end{split}
\end{equation}
Here the map in $\MF_2^P$ is normalized if $\varphi(0)=0$ and $\varphi'(0)=1$, and the map in $\MF_1^Q$ is normalized if $\varphi(\infty)=\infty$ and $\underset{{\zeta\to\infty}}{\lim}\frac{\varphi(\zeta)}{\zeta}=1$.
\end{defi}

\begin{prop}[{Relation between $\MF_1^P$, $\MF_2^P$ and $\MF_1^Q$}]\label{prop-relation}
We have $\MF_2^P\subset \MF_1^P \simeq \MF_1^Q$.
It is formulated more precisely as follows:
\begin{enumerate}
\item There exists a natural injection $\MF_2^P\hookrightarrow\MF_1^P$, defined by the restriction of $\varphi$ to $V$ for $f=P\circ\varphi^{-1}\in\MF_2^P$;
\item There exists a one-to-one correspondence between $\MF_1^P$ and $\MF_1^Q$, defined by
\begin{equation}
\begin{split}
\MF_1^P\ni f=P\circ\varphi^{-1}\mapsto F=&~\psi_0^{-1}\circ f\circ\psi_0=\psi_0^{-1}\circ P\circ\psi_1\circ\psi_1^{-1}\circ\varphi^{-1}\circ\psi_0\\
     =&~Q\circ\widehat{\varphi}^{-1}\in\MF_1^Q
\end{split}
\end{equation}
with the correspondence $\varphi\mapsto\widehat{\varphi}=\psi_0^{-1}\circ\varphi\circ\psi_1$;
\item $2.5\leq |f''(0)|\leq 4.7$ for all $f\in\MF_1^P$.
\end{enumerate}
\end{prop}

Part (c) implies that $f''(0)$ is uniformly bounded and uniformly away from zero for all $f\in\MF_1^P$. The proof of Proposition \ref{prop-relation} will be given in \S\ref{sec-P-to-Q}.

\medskip
In the following, suppose $F=Q\circ\varphi^{-1}\in\MF_1^Q$, where $\varphi:\EC\setminus\overline{\mathscr{D}}\to\EC\setminus\{0\}$ is a normalized univalent map.
In order to prove the Main Theorem, it is sufficient to prove that for any $F\in\MF_1^Q$, the parabolic renormalization $\MMR_0 F$ belongs to $\MF_2^P$.

As in \cite{IS08}, we now define a Riemann surface $X$ such that one can lift $F^{-1}=\varphi\circ Q^{-1}$ to a single-valued branch on $X$.

\begin{defi}[{Riemann surface $X$}]
Let $cv:=cv_Q=\frac{3(8\sqrt{6}+3)}{4}\Aeq{16.94}$ (which is a critical value of $Q$), $R=125$ and $\rho=0.03$. We define four ``sheets" by
\begin{equation}
\begin{split}
X_{1\pm}:=&~\{z\in\C: \pm\,\im z\geq 0,\,|z|>\rho \text{~and~} \tfrac{\pi}{6}<\pm\arg(z-cv)\leq\pi\}, \text{ and}\\
X_{2\pm}:=&~\{z\in\C: z\not\in\R_-,\,\pm\,\im z\geq 0,\,\rho<|z|<R \text{~and~} \tfrac{\pi}{6}<\pm\arg(z-cv)\leq\pi\}.
\end{split}
\end{equation}
These ``sheets" are considered as in different copies of $\C$ and we use $\pi_{i\pm}:X_{i\pm}\to\C$ ($i=1,2$) to denote the natural projection. The Riemann surface $X$ is constructed as follows: $X_{1+}$ and $X_{1-}$ are glued along negative real axis, $X_{1+}$ and $X_{2-}$ are glued along positive real axis and $X_{1-}$ and $X_{2+}$ are also glued along positive real axis. The projection $\pi_X:X\to\C$ is defined as $\pi_X=\pi_{i\pm}$ on $X_{i\pm}$ and the complex structure of $X$ is given by the projection. See Figure \ref{Fig-Riemann-X}.
\end{defi}

\begin{figure}[!htpb]
  \setlength{\unitlength}{1mm}
  \centering
  \includegraphics[width=0.98\textwidth]{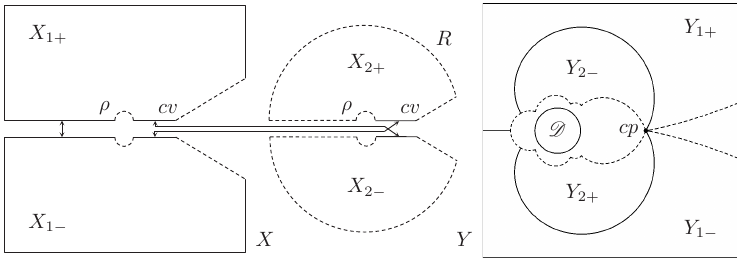}
  \caption{Riemann surface $X$ (left) and the domain $Y$ (right).}
  \label{Fig-Riemann-X}
\end{figure}

\begin{prop}[{Lifts of $Q$ and $\varphi$ to $X$}]\label{prop-lift}
There exists an open subset $Y\subset\C\setminus (\overline{\mathscr{D}}\cup\R_+)$ and a constant $b_0>7$ with the following properties:
\begin{enumerate}
\item There exists an isomorphism $\widetilde{Q}:Y\to X$ such that $\pi_X\circ\widetilde{Q}=Q$ on $Y$ and $\widetilde{Q}^{-1}(z)=\pi_X(z)-b_0+o(1)$ as $z\in X$ and $\pi_X(z)\to\infty$;
\item The normalized univalent map $\varphi$ restricted to $Y$ can be lifted to a univalent holomorphic map $\widetilde{\varphi}:Y\to X$ such that $\pi_X\circ\widetilde{\varphi}=\varphi$ on $Y$.
\end{enumerate}
\end{prop}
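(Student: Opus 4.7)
The plan is to build $Y$ as a union of open pieces $Y_{i\pm}\subset\C\subt(E\cup\R_+)$, each conformally isomorphic to the corresponding sheet $X_{i\pm}$ of $X$ under a carefully chosen single-valued branch of $Q^{-1}$, and then to verify that these pieces glue along their common boundaries exactly as prescribed in the construction of $X$. This mirrors the Inou--Shishikura strategy: the Riemann surface $X$ has been designed to be the ``unrolled'' target of the multi-valued inverse of $Q$, so that $\pi_X:X\to\C$ locally inverts $Q$ on neighborhoods cut to avoid the two critical values $0$ and $cv$.

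For part (a), I would start by using the $1$-parabolic normalization of $Q$ at $\infty$ from Lemma \ref{lema-Q} to obtain a single-valued branch of $Q^{-1}$ on a neighborhood of $\infty$ with the expansion $Q^{-1}(z)=z-b_0+o(1)$. The sheet $X_{1+}$ is then swept out by analytic continuation of this branch; the specific cuts in the definition of $X_{1+}$ (around $0$, along $\R_-$, and in the sector based at $cv$) are designed precisely to avoid the ramification locus of $Q$, so the continuation remains univalent and yields a region $Y_{1+}$ biholomorphic to $X_{1+}$ via $Q$. Applying the same procedure to the remaining sheets produces $Y_{1-},Y_{2+},Y_{2-}$. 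The three gluings in $X$ along pieces of $\R_\pm$ translate into continuity of the chosen inverse branches across matching curves in the boundaries of the $Y_{i\pm}$; setting $Y=\bigcup_{i,\pm}Y_{i\pm}$ and defining $\widetilde{Q}$ piecewise then gives an isomorphism onto $X$ with $\pi_X\circ\widetilde{Q}=Q$, and the asymptotic formula for $\widetilde{Q}^{-1}$ is inherited from the expansion on the unbounded sheet $Y_{1+}$.

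For part (b), the lift is produced by transporting $\varphi$ through $\pi_X$ sheet by sheet. Since $\varphi:\EC\subt E\to\C^*$ is univalent with $\varphi(\zeta)=\zeta+c_0+O(1/\zeta)$ at infinity, the difference $\varphi(w)-w$ is uniformly bounded on $Y$, and for $w\in Y_{1+}$ sufficiently near $\infty$ the image $\varphi(w)$ lies in the projection $\pi_X(X_{1+})$. This determines the correct branch of the lift on $Y_{1+}$: define $\widetilde{\varphi}(w)$ to be the unique preimage of $\varphi(w)$ in $X_{1+}$, and propagate by analytic continuation across the simply connected sheet $Y_{1+}$ to obtain a univalent map $\widetilde{\varphi}_{1+}:Y_{1+}\to X_{1+}$ satisfying $\pi_X\circ\widetilde{\varphi}_{1+}=\varphi$. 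Repeating on each of the other sheets and checking that the lifts match along the $\R_\pm$ gluings --- which follows from continuity of $\varphi$ together with the gluing rules defining $X$ --- yields the global lift $\widetilde{\varphi}:Y\to X$.

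The main obstacle I anticipate is verifying that the chosen branches of $Q^{-1}$ extend to all of the corresponding sheets $X_{i\pm}$ without colliding with the critical orbit of $Q$, which requires a careful understanding of the combinatorial placement of $0$ and $cv$ relative to the prescribed cuts (using the explicit data from Lemma \ref{lema-cp-cv-Q}). A secondary point is to confirm that $Y$ genuinely lies in $\C\subt(E\cup\R_+)$, i.e., that excluding the ellipse $E$ together with the positive real axis is enough to keep $Y$ disjoint from the problematic loci; this relies on Proposition \ref{prop-V-V'} to control the location of the $\psi_1^{-1}$-preimages of the bad regions. Once (a) is in place, (b) is comparatively routine, essentially a monodromy argument on simply connected sheets where the monodromy is trivial by construction.
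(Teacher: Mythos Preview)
Your outline for part~(a) is workable but differs from the paper's route: rather than analytically continuing a germ of $Q^{-1}$, the paper exploits the partition of $\C\setminus\overline{\D}$ into the regions $\MU_{j\pm}$ (from \S\ref{subsec-P-to-Q}) on each of which $Q$ is already univalent, and simply sets $Y_{j\pm}=(Q|_{\overline{\MU}_{j\pm}})^{-1}(\pi_X(X_{j\pm}))$ (using $\overline{\MU}_{3+}$ for $Y_{2+}$), checking consistency along the shared boundary arcs. The containment $Y\subset\C\setminus(E\cup\R_+)$ is Lemma~\ref{lema-Y-Y}, which gives the stronger $Y\subset\widetilde{Y}:=\C\setminus(E_{r_1}\cup\R_+\cup\overline{\V}(13,\tfrac{\pi}{5}))$ via Lemmas~\ref{lema-E-r1}(d) and~\ref{lema-Q-subset-cv}; Proposition~\ref{prop-V-V'} is not the relevant input here.

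Part~(b), however, has a genuine gap. First, your assertion that the lift sends $Y_{1+}$ into the single sheet $X_{1+}$ is false: $\varphi$ can move points across the real axis, so the lift must be permitted to pass into $X_{2-}$ or $X_{1-}$ along the gluing seams. Second, $\pi_X:X\to\C$ is \emph{not} a covering map onto its image --- the bounded sheets $X_{2\pm}$ have an outer edge at $|z|=R$, and every sheet has the edge $\arg(z-cv)=\tfrac{\pi}{5}$ --- so ``propagate by analytic continuation'' and ``trivial monodromy'' do not apply until you have confined $\varphi(Y)$ away from these edges. That confinement is the actual content of the proof: one needs the explicit bounds $|\varphi(\zeta)-\zeta|<(cv-13)\sin\tfrac{\pi}{5}$ for $|\zeta|\geq 13$ (so $\varphi(\widetilde{Y})\cap\overline{\V}(cv,\tfrac{\pi}{5})=\emptyset$), together with $|\varphi(\zeta)|>\rho$ and $\big|\arg\tfrac{\varphi(\zeta)}{\zeta}\big|<\pi$ from Lemma~\ref{lema-phi-rho}, and a separate check that $|\varphi(\zeta)|<R$ whenever the lift is forced into $X_{2\pm}$. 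Accordingly the paper lifts $\varphi$ on the larger, simpler domain $\widetilde{Y}$ rather than on $Y$, and \emph{defines} $\widetilde{\varphi}$ by an explicit case split on the signs of $\im\zeta$ and $\arg\tfrac{\varphi(\zeta)}{\zeta}$, verifying well-definedness and continuity from those estimates. Your ``$\varphi(w)-w$ is uniformly bounded'' is not sharp enough to replace this step.
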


The constant $b_0=\tfrac{2(13+32\sqrt{6})}{25}\Aeq{7.31}$ appears in the expansion of $Q$ (see Lemma \ref{lema-Q}). Proposition \ref{prop-lift} will be proved in \S\ref{sec-lift-Q}.

\begin{defi}
Define $g:=\widetilde{\varphi}\circ \widetilde{Q}^{-1}:X\to X$. We have the commutative diagram in Figure \ref{Fig-commu-diagram}.
\begin{figure}[!htpb]
  \setlength{\unitlength}{1mm}
  \centering
  \includegraphics[width=0.5\textwidth]{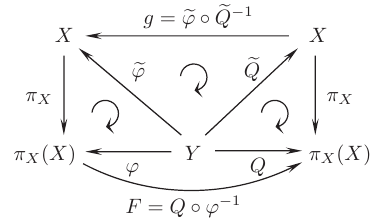}
  \caption{The commutative diagram about the maps $Q$, $\varphi$ and their lifts $\widetilde{Q}$, $\widetilde{\varphi}$.}
  \label{Fig-commu-diagram}
\end{figure}
\end{defi}

\begin{prop}[{Repelling Fatou coordinate on $X$}]\label{prop-rep-Fatou}
The map $g$ satisfies $F\circ\pi_X\circ g=\pi_X$. There exists an injective holomorphic mapping $\widetilde{\Phi}_{rep}:X\to\C$ such that $\widetilde{\Phi}_{rep}(g(z))=\widetilde{\Phi}_{rep}(z)-1$. Moreover, in $\{z\in\C:\re z<-2R\}$, $\widetilde{\Phi}_{rep}\circ\pi_X^{-1}$ is a repelling Fatou coordinate of $F=Q\circ\varphi^{-1}$.
\end{prop}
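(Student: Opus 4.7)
The plan is to first verify the identity $F \circ \pi_X \circ g = \pi_X$ by direct unwinding of the definitions, and then to build $\widetilde{\Phi}_{rep}$ by lifting the classical repelling Fatou coordinate on a petal and extending it to all of $X$ via iteration of $g$. The identity is immediate from Proposition \ref{prop-lift}: since $\pi_X \circ \widetilde{Q} = Q$ and $\pi_X \circ \widetilde{\varphi} = \varphi$, one has $\pi_X \circ g = \pi_X \circ \widetilde{\varphi} \circ \widetilde{Q}^{-1} = \varphi \circ \widetilde{Q}^{-1}$, hence
\[
F \circ \pi_X \circ g \;=\; (Q \circ \varphi^{-1}) \circ \varphi \circ \widetilde{Q}^{-1} \;=\; Q \circ \widetilde{Q}^{-1} \;=\; \pi_X,
\]
so that $g$ is the canonical lift of the single-valued inverse branch $F^{-1} = \varphi \circ Q^{-1}$ to the Riemann surface $X$, and in particular $\pi_X \circ g^n = F^{-n} \circ \pi_X$ whenever both sides make sense.

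For $\widetilde{\Phi}_{rep}$ I would follow the standard template. Because $F = Q \circ \varphi^{-1}$ inherits from $Q$ and from the normalization $\varphi(\zeta) = \zeta + O(1)$ at $\infty$ a non-degenerate $1$-parabolic fixed point at $\infty$, the theory recalled in \S\ref{sec-defi-of-renorm} produces a univalent repelling Fatou coordinate $\Phi_{rep}$ on a repelling petal $P_{rep} \subset \{z : \re z < -R\}$, satisfying $\Phi_{rep} \circ F^{-1} = \Phi_{rep} - 1$. In $\{z : \re z < -R\}$ only the sheets $X_{1\pm}$ contribute, and their gluing along $\R_-$ is compatible with a single-valued branch of $\pi_X^{-1}$, so I define $\widetilde{\Phi}_{rep} := \Phi_{rep} \circ \pi_X$ on $\widetilde{P}_{rep} := \pi_X^{-1}(P_{rep})$. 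This already yields the last assertion of the proposition. The extension to all of $X$ is dynamical: for any $z \in X$ I would show that some iterate $g^n(z)$ lies in $\widetilde{P}_{rep}$, exploiting $\pi_X \circ g^n = F^{-n} \circ \pi_X$ and the asymptotics $\widetilde{Q}^{-1}(w) = \pi_X(w) - b_0 + o(1)$ from Proposition \ref{prop-lift} to push $\re \pi_X(g^n(z))$ to $-\infty$. Once such an $n$ exists, set
\[
\widetilde{\Phi}_{rep}(z) \;:=\; \Phi_{rep}\bigl(\pi_X(g^n(z))\bigr) + n;
\]
the relation $\Phi_{rep} \circ F^{-1} = \Phi_{rep} - 1$ makes this independent of $n$, yields the functional equation $\widetilde{\Phi}_{rep} \circ g = \widetilde{\Phi}_{rep} - 1$, and ensures holomorphy.

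The hard part will be the injectivity of $\widetilde{\Phi}_{rep}$ on the whole of $X$. Locally it is inherited from the univalence of $\Phi_{rep}$ and of $\pi_X$ on $\widetilde{P}_{rep}$. Globally, if $\widetilde{\Phi}_{rep}(z_1) = \widetilde{\Phi}_{rep}(z_2)$, I would choose $n$ large enough that $g^n(z_1), g^n(z_2) \in \widetilde{P}_{rep}$; the equality then forces $\pi_X(g^n(z_1)) = \pi_X(g^n(z_2))$ by univalence of $\Phi_{rep}$, hence $g^n(z_1) = g^n(z_2)$ by the single-valuedness of $\pi_X^{-1}$ on the petal, and finally $z_1 = z_2$ because $g = \widetilde{\varphi} \circ \widetilde{Q}^{-1}$ is injective (both factors being isomorphisms from Proposition \ref{prop-lift}). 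The main obstacle I anticipate is showing that every point of $X$ really does enter $\widetilde{P}_{rep}$ under iteration of $g$: this needs a geometric control of how $g$ transfers orbits across the three gluings between $X_{1\pm}$ and $X_{2\pm}$ and how $\re \pi_X(\cdot)$ decreases along such orbits, i.e.\ the Riemann-surface analogue of the classical parabolic-flow argument on the repelling petal, which I expect to follow from the explicit shapes of $X$, $Y$ and the parabolic normal form of $Q$ at $\infty$.
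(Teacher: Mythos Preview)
Your proposal is correct and follows essentially the same approach as the paper. The paper's proof of the identity is identical to yours, and for the remainder it supplies precisely the quantitative input you flag as the main obstacle---the estimate $|\arg(b_0-c_0)|\leq\arcsin\bigl(c_{01,\max}/(b_0-c_{00})\bigr)<\pi/10$, which guarantees that the inverse branch $\overline{g}(z)=z-(b_0-c_0)+o(1)$ maps $W=\C\setminus\overline{\V}(-L,\tfrac{\pi}{10})$ into itself with $\re\,\overline{g}(z)<\re\,z-4$---and then defers the remaining extension and injectivity arguments (matching your outline) to \cite[\S5.J]{IS08}.
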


Proposition \ref{prop-rep-Fatou} will be proved in \S\ref{sec-rep}.

\begin{defi}
For $z_0\in\C$ and $\theta>0$, we denote the sector
\begin{equation}\label{equ:V-z0-theta}
\V(z_0,\theta):=\{z\in\C:\,z\neq z_0 \text{ and } |\arg(z-z_0)|<\theta\}
\end{equation}
and use $\overline{\V}(z_0,\theta)$ to denote its closure. Define
\begin{equation}
W_1:=\V(cv,\tfrac{3\pi}{4})\setminus\overline{\V}(F(cv),\tfrac{\pi}{4}).
\end{equation}
\end{defi}

We will prove in Lemma \ref{lema-F-cv}(b) that $\re F(cv)>22>cv\Aeq{16.94}$ and hence $W_1$ is connected. Let $R_1=108$ and recall that $\eta=3$.

\begin{prop}[{Attracting Fatou coordinate and the shape of $D_1$}]\label{prop-Phi}
We have
\begin{enumerate}
\item $F$ maps $\V(13,\frac{3\pi}{4})$ into itself and $\V(13,\frac{3\pi}{4})$ is contained in the immediate parabolic basin of $\infty$. There exists an attracting Fatou coordinate $\Phia:\V(13,\frac{3\pi}{4})\to\C$ such that $\Phia(F(z))=\Phia(z)+1$ and $\Phia(cv)=1$. Moreover, $\Phia(\V(13,\frac{3\pi}{4}))$ contains $\{w\in\C:\re w>1\}$;
\item There are domains $D_1$, $D_1^\pm$ in $W_1 \subset\V(13,\frac{3\pi}{4})$ such that
\begin{equation}\label{equ:PhiaD}
\begin{split}
\Phia(D_1)=&~\{w\in\C:1<\re w<2,\,-\eta<\im w<\eta\}, \text{~and}\\
\Phia(D_1^\pm)=&~\{w\in\C:1<\re w<2,\,\pm\,\im w>\eta\},
\end{split}
\end{equation}
where $D_1\subset\D(cv,R_1)$ and $D_1^\pm\subset\{z\in\C:\tfrac{\pi}{6}<\pm\,\arg(z-cv)<\tfrac{3\pi}{4}\}$.
\end{enumerate}
\end{prop}

This proposition will be proved in \S\ref{sec-Phi-attr}. The number $\eta$ in this proposition can be replaced by any number between $3$ and $8$ while using the same $R_1=108$ (see \eqref{eta-geq} and \eqref{eta-leq}).
Part (a) implies that $cv$ is contained in the immediate basin of $\infty$. 
After adding a constant, we normalize $\widetilde{\Phi}_{rep}$ such that $\widetilde{\Phi}_{rep}(z)-\Phi_{attr}(\pi_X(z))\to 0$ when $z\in X$, $\pi_X(z)\in D_1^+$ and $\im \pi_X(z)\to+\infty$.

\begin{rmk}
It turns out (by computer experiments) that there is no $\theta\in(0,\pi)$ such that $\Phia$ is injective in $\V(cv,\theta)$ and such that $\Phia(\V(cv,\theta))$ contains $\{w\in\C:\re w>1\}$. To overcome this difficulty, we consider $F(cv)$ and prove that $\Phia$ is injective in $\V(F(cv),\frac{13\pi}{20})$ and that $\Phia(\V(F(cv),\frac{13\pi}{20}))$ contains $\{w\in\C:\re w>2\}$. This is one of the main differences between our arguments and Inou-Shishikura's.
\end{rmk}

\begin{prop}[{Domains around the critical point}]\label{prop-F}
There exist disjoint Jordan domains $D_0$, $D_0'$, $D_0''$, $D_{-1}$, $D_{-1}'''$, $D_{-1}''''$ and a domain $D_0^+$ such that
\begin{enumerate}
\item  $\overline{D}_0$, $\overline{D}_0'$, $\overline{D}_0''$, $\overline{D}_{-1}$, $\overline{D}_{-1}'''$, $\overline{D}_{-1}''''$, $\overline{D}_0^+$ are contained in $Dom(F)$ $=Image(\varphi)$;
\item $F(D_0)=F(D_0')=F(D_0'')=D_1$, $F(D_{-1})=F(D_{-1}''')=F(D_{-1}'''')=D_0$ and $F(D_0^+)=D_1^+$;
\item $F$ is injective in each of these domains;
\item $cp_F=\varphi(cp_Q)\in \overline{D}_0\cap \overline{D}_0'\cap \overline{D}_0''\cap \overline{D}_{-1}\cap \overline{D}_{-1}'''\cap \overline{D}_{-1}''''$, $\overline{D}_0\cap \overline{D}_1\neq\emptyset$, $\overline{D}_0^+\cap \overline{D}_1^+\neq\emptyset$ and $\overline{D}_{-1}\cap \overline{D}_0^+\neq\emptyset$;
\item $\overline{D}_0\cup \overline{D}_0'\cup \overline{D}_0''\cup \overline{D}_{-1}\cup \overline{D}_{-1}'''\cup \overline{D}_{-1}''''\setminus\{cv\}
    \subset\pi_X(X_{2+})\cup\pi_X(X_{2-})=\D(0,R)\setminus(\overline{\D}(0,\rho)\cup\R_-\cup\OV(cv,\frac{\pi}{6}))$ and $\overline{D}_0^+\subset\pi_X(X_{1+})$.
\end{enumerate}
\end{prop}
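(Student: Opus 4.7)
The plan is to construct all seven domains as components of preimages under $F$, using the covering structure of $Q$ encoded in the Riemann surface $X$ and the isomorphism $\widetilde{Q}\colon Y\to X$ from Proposition \ref{prop-lift}. Since $F = Q\circ\varphi^{-1}$ and $\varphi$ is univalent, lifting to $X$ resolves the multivaluedness of $F^{-1}$: each component of $\pi_X^{-1}(D_1)\subset X$ pulls back via $\widetilde{Q}^{-1}$ to a subset of $Y$, whose image under $\varphi$ is a component of $F^{-1}(D_1)$ in $Dom(F)$. I would take $D_0, D_0', D_0''$ to be the three such components whose closures contain $cp_F = \varphi(cp_Q)$, $D_{-1}, D_{-1}''', D_{-1}''''$ the analogous three components of $F^{-1}(D_0)$, and $D_0^\sharp$ the preimage of $D_1^\sharp$ arising from the $X_{1+}$-sheet.

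First I would identify the relevant lifts of $D_1$. Since $Q$ has a critical point of local degree three at $cp_Q$ with $Q(cp_Q) = cv \in \partial D_1$, the set $Q^{-1}(D_1)$ accumulates on $cp_Q$ from exactly three disjoint local angular sectors. The inclusion demanded by (e) forces these to lift into the sheets $X_{2+}\cup X_{2-}$, so the three sectors correspond to three distinct components of $\pi_X^{-1}(D_1)\cap(X_{2+}\cup X_{2-})$; pulling back by $\widetilde{Q}^{-1}$ and pushing forward by $\varphi$ gives $D_0, D_0', D_0''$. Injectivity of $F$ on each follows from injectivity of $\widetilde{Q}^{-1}$ and $\varphi$, while disjointness of the three domains follows from disjointness of their lifts in $X$. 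The analogous construction with $D_1$ replaced by $D_0$ produces $D_{-1}, D_{-1}''', D_{-1}''''$; this is valid because $cp_F\in\overline{D}_0$ and $F(cp_F) = cv$ together force $cv\in\overline{D}_0$ through the critical branching, so the same degree-three argument applies at $cp_F$.

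For $D_0^\sharp$, since $D_1^\sharp\subset\{z:\pi/5<\arg(z-cv)<7\pi/10\}$ and its lift into $X_{1+}$ is available, I apply $\widetilde{Q}^{-1}$ and $\varphi$ to obtain $D_0^\sharp$ with $\overline{D}_0^\sharp\subset\pi_X(X_{1+})$. The labeling of $D_0$ vs.\ $D_0'$ vs.\ $D_0''$ and of the three $D_{-1}$-domains is chosen to secure the prescribed adjacencies: $\overline{D}_0\cap\overline{D}_1\neq\emptyset$ from the preimage abutting $D_1$ across $\Phia^{-1}(\{1\}\times(-\eta,\eta))$, $\overline{D}_0^\sharp\cap\overline{D}_1^\sharp\neq\emptyset$ analogously across the upper end, and $\overline{D}_{-1}\cap\overline{D}_0^\sharp\neq\emptyset$ by selecting the $D_{-1}$-label for the angular sector at $cp_F$ adjacent to $D_0^\sharp$.

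The main obstacle is the careful bookkeeping of the three preimages at $cp_F$ on the Riemann surface. One must verify that each of the three local sectors of $Q^{-1}(D_1)$ near $cp_Q$ lifts into $X_{2+}\cup X_{2-}$ (and not into the $X_{1\pm}$-sheets), that each lift approaches a distinct boundary point of $Y$ mapping to $cp_Q$ under $\pi_X\circ\widetilde{Q}$ so that $cp_F$ genuinely lies in the closure of each $\varphi$-image, and that the resulting $D_0, D_0', D_0''$ together with $D_{-1}, D_{-1}''', D_{-1}''''$ all lie within $\D(0,R)\setminus(\OD(0,\rho)\cup\R_-\cup\overline{\V}(cv,\pi/5))$. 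The last containment is a size estimate: one tracks the $X$-lifts to confirm they stay in the $X_{2\pm}$-sheets and avoid the excluded central disk, negative ray, and sector around $cv$.
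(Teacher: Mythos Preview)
Your construction has a genuine obstruction at the first step. You propose to obtain $D_0,D_0',D_0''$ by lifting $D_1$ into $X$ and then applying $\widetilde{Q}^{-1}$ and $\varphi$. But $D_1$ is not contained in $\pi_X(X)$: by the definition of the sheets, every point of $\pi_X(X)$ satisfies $|\arg(z-cv)|>\tfrac{\pi}{5}$, whereas $D_1\subset W_1\subset\V(cv,\tfrac{7\pi}{10})$ and (unlike $D_1^\sharp$ and $D_1^\flat$) there is no lower bound on $|\arg(z-cv)|$ for $z\in D_1$. Near $cv$ the domain $D_1$ meets the forbidden sector $\OV(cv,\tfrac{\pi}{5})$, so $\pi_X^{-1}(D_1)$ misses that part and your three ``components'' are not preimages of $D_1$. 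The paper avoids this by working in the $\zeta$-plane rather than on $X$: it sets $\widetilde{D}_0=(Q|_{\MU_1})^{-1}(D_1)$, $\widetilde{D}_0'=(Q|_{\MU_2})^{-1}(D_1)$, $\widetilde{D}_0''=(Q|_{\MU_3})^{-1}(D_1)$ using the sheet decomposition $\MU_1,\MU_2,\MU_3$ of the domain of $Q$, and then $D_0=\varphi(\widetilde{D}_0)$, etc. The Riemann surface $X$ enters only \emph{after} (e) is proved, in Proposition~\ref{prop-E_F-P}, precisely because (e) is what guarantees these domains can be regarded as subsets of $X_{2+}\cup X_{2-}$.

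More seriously, you treat (e) as a routine ``size estimate'', but in the paper it is the entire content of the proof and the single hardest step. The inclusion in $\D(0,R)$ and the avoidance of $\OD(0,\rho)$ follow from Lemma~\ref{lema-D0} and Lemma~\ref{lema-phi-rho}; the avoidance of $\OV(cv,\tfrac{\pi}{5})$ uses the Fatou-coordinate estimate $\re\,\Phia(z')>1$ on that sector. The genuinely delicate point is showing $\varphi(\zeta)\notin\R_-$ for $\zeta$ in the closures of $\widetilde{D}_0,\widetilde{D}_0',\ldots,\widetilde{D}_{-1}''''$. This requires Lemma~\ref{lema-W0} (flagged in the paper as ``the most delicate lemma among all the estimations''), which pins down $Q^{-1}(W_0)\cap\MU_{123}$ inside an explicit region $\widetilde W_{-1}$ built from $\widetilde\Omega_0,\widetilde\Omega_1,\widetilde\Omega_2,\widetilde\Omega_3$ and two disks, via many computer-checked inequalities; one then combines this with the Koebe-type argument bound of Lemma~\ref{lema-arg-arg} to conclude $\varphi(\zeta)\notin\R_-$. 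None of this bookkeeping is visible in your outline, and there is no mechanism by which ``tracking the $X$-lifts'' could substitute for it, since the $\R_-$-avoidance is a statement about $\varphi$, not about $Q$.
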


This proposition will be proved in \S\ref{sec-bounding-dom}. The main work there is to prove $(\overline{D}_0\cup \overline{D}_0'\cup \overline{D}_0''\cup \overline{D}_{-1}\cup \overline{D}_{-1}'''\cup \overline{D}_{-1}'''')\cap\R_-=\emptyset$. See Figure \ref{Fig-chessboard} for the shape of these domains in the case of $\varphi=id$.

\begin{figure}[!htpb]
  \setlength{\unitlength}{1mm}
  \centering
  \includegraphics[width=0.98\textwidth]{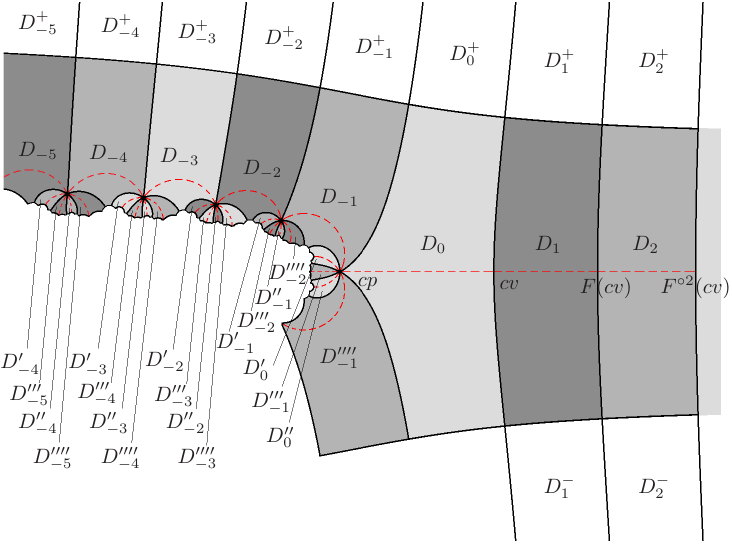}
  \caption{Various domains for $F=Q$ (i.e., $\varphi=id$). The images of $D_1$ and $D_1^+$ under $g$ (i.e., the inverse of $F$) are denoted by $D_{-n}=g^n(D_0)$, $D_{-n}'=g^n(D_0')$, $D_{-n}''=g^n(D_0'')$, $D_{-n}'''=g^{n-1}(D_{-1}''')$, $D_{-n}''''=g^{n-1}(D_{-1}'''')$, $D_{-n}^+=g^n(D_0^+)$ and their projection by $\pi_X$ are drawn. To emphasize details, we choose $\eta=1.5$ in this figure.}
  \label{Fig-chessboard}
\end{figure}

\begin{prop}[{Relating $E_F$ to $P$}]\label{prop-E_F-P}
The parabolic renormalization $\MMR_0 F$ belongs to the class $\MF_2^P$ (possibly after a linear conjugacy). In fact, suppose we regard $D_0$, $D_0'$, $D_0''$, $D_{-1}'''$, $D_{-1}''''$, $D_0^+$ as subsets of $X_{1+}\cup X_{2-}\subset X$ and let
\begin{equation}
U:=\text{the interior of }\bigcup_{n=0}^\infty g^n\big(\overline{D}_0\cup \overline{D}_0'\cup \overline{D}_0''\cup \overline{D}_{-1}'''\cup \overline{D}_{-1}''''\cup \overline{D}_0^+\big).
\end{equation}
Then there exists a surjective holomorphic mapping $\Psi_1: U\to U_\eta^P\setminus\{0\}=V'\setminus\{0\}$ such that
\begin{enumerate}
\item $P\circ\Psi_1=\Psi_0\circ\widetilde{\Phi}_{attr}$ on $U$, where $\Psi_0(z)=cv_P\,\Exp(z)=cv_P \,e^{2\pi\ii z}:\C\to\C^*$, and $\widetilde{\Phi}_{attr}:U\to\C$ is the natural extension of the attracting Fatou coordinate to $U$;
\item $\Psi_1(z)=\Psi_1(z')$ if and only if $z'=g^n(z)$ or $z=g^n(z')$ for some integer $n\geq 0$;
\item $\psi=\Psi_0\circ\widetilde{\Phi}_{rep}\circ\Psi_1^{-1}:V'\setminus\{0\}\to\C^*$ is well-defined and extends to a normalized univalent function on $V'$;
\item on $\psi(V'\setminus\{0\})$, the following holds:
\begin{equation}
P\circ\psi^{-1}=P\circ\Psi_1\circ\widetilde{\Phi}_{rep}^{-1}\circ\Psi_0^{-1}=\Psi_0\circ\widetilde{\Phi}_{attr}\circ\widetilde{\Phi}_{rep}^{-1}\circ\Psi_0^{-1}=\Psi_0\circ E_F\circ\Psi_0^{-1}.
\end{equation}
\end{enumerate}
\end{prop}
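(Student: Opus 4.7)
The strategy is to construct $\Psi_1$ first on the fundamental region
$R_0 := \overline{D}_0 \cup \overline{D}_0' \cup \overline{D}_0'' \cup \overline{D}_{-1}''' \cup \overline{D}_{-1}'''' \cup \overline{D}_0^\sharp \subset X$
and then extend throughout $U$ by $g$-translation. The key observation is that
$\Psi_0 \circ \widetilde{\Phi}_{attr}$ is $g$-invariant: combining
$F \circ \pi_X \circ g = \pi_X$ with the functional equation
$\Phi_{attr} \circ F = \Phi_{attr} + 1$
yields $\widetilde{\Phi}_{attr} \circ g = \widetilde{\Phi}_{attr} - 1$, and
$\Psi_0(\zeta) = cv_P\,\Exp^\sharp(\zeta)$ has period one. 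Hence, once $\Psi_1$ is
defined on $R_0$ as a single-valued holomorphic lift of
$\Psi_0 \circ \widetilde{\Phi}_{attr}$ through $P$, the rule
$\Psi_1(z) := \Psi_1(g^{-n}(z))$ on $g^n(R_0)$ extends $\Psi_1$ consistently to all
of $U$ and property (a) holds automatically.

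The initial definition on $R_0$ is the main combinatorial step. By
Proposition~\ref{prop-Phi}\,(b), $\widetilde{\Phi}_{attr}$ sends each of
$D_0, D_0', D_0''$ into $\{0 < \re z < 1,\,|\im z| < \eta\}$, sends
$D_{-1}''', D_{-1}''''$ into $\{-1 < \re z < 0,\,|\im z| < \eta\}$, and sends
$D_0^\sharp$ into $\{0 < \re z < 1,\,\im z > \eta\}$. Therefore
$\Psi_0 \circ \widetilde{\Phi}_{attr}$ takes values in the annulus
$A := \{|cv_P|e^{-2\pi\eta} < |w| < |cv_P|e^{2\pi\eta}\}$ on the first five
pieces, and in the inner disc $\{|w| < |cv_P|e^{-2\pi\eta}\}$ on $D_0^\sharp$.
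Since $cp_F$ has local degree three, $D_0, D_0', D_0''$ cyclically fan out
around $cp_F$, each mapped biholomorphically onto $D_1$ by $F$; this matches
the topological fan of three preimage leaves of a small neighborhood of
$cv_P$ meeting at the cubic critical point $-\tfrac{1}{\sqrt{5}}$ of $P$
inside $V'$. I would select branches of $P^{-1}$ on $A$ so that
$\Psi_1(D_0), \Psi_1(D_0'), \Psi_1(D_0'')$ are precisely these three leaves,
$\Psi_1(D_{-1}'''), \Psi_1(D_{-1}'''')$ are two of the three second-layer
leaves at $-\tfrac{1}{\sqrt{5}}$, and $\Psi_1(D_0^\sharp)$ is the component
of $V'$ adjacent to the deleted slit around $c_+$.
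Proposition~\ref{prop-F}\,(e), placing the six closures in
$\C \subt (\R_- \cup \OV(cv,\tfrac{\pi}{5}))$, makes each such branch
unambiguous.

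Property (b) then reduces to injectivity of $\Psi_1$ on $R_0$ together with
the fact that the translates $g^n(R_0)$ tile $U$ with boundary overlaps only
along the $g$-identifications, which follows by a direct shape analysis of
the domains in Proposition~\ref{prop-F}. Surjectivity of $\Psi_1$ onto
$V' \subt \{0\}$ is the central obstacle: given $w \in V' \subt \{0\}$, one
has $P(w) \in \D(0,|cv_P|e^{2\pi\eta}) \subt \{0\}$ since $w \in V' = U_\eta^P$,
so $P(w) = \Psi_0(\zeta)$ for some $\zeta$ with $|\im \zeta| < \eta$;
translating $\zeta$ by an integer places it in
$\widetilde{\Phi}_{attr}(R_0)$, and the chosen branch of $P^{-1}$ yields a
preimage of $w$ in $R_0$. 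Points of $V'$ close to $0$ are covered instead by
the $g^n(D_0^\sharp)$ iterates, which accumulate at the parabolic fixed
point in $X$; the bookkeeping for this exhaustion, together with verifying
that no unwanted identifications arise across the $g$-translates, is the
hardest part of the proof and is the reason the shapes in
Proposition~\ref{prop-F} must be chosen so carefully.

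For (c), the $g$-invariance of $\Psi_0 \circ \widetilde{\Phi}_{rep}$
(a consequence of $\widetilde{\Phi}_{rep} \circ g = \widetilde{\Phi}_{rep} - 1$
from Proposition~\ref{prop-rep-Fatou} and the periodicity of $\Psi_0$)
makes $\psi := \Psi_0 \circ \widetilde{\Phi}_{rep} \circ \Psi_1^{-1}$
well-defined on $V' \subt \{0\}$, and univalence there follows from
injectivity of $\widetilde{\Phi}_{rep}$ combined with property (b). The
normalization $\widetilde{\Phi}_{rep} - \Phi_{attr} \circ \pi_X \to 0$ at the
upper end forces $\psi(w) \to 0$ as $w \to 0$, whence Riemann's removable
singularity theorem yields a holomorphic extension to $V'$, which the upper-end
normalization pins down to $\psi(0) = 0$, $\psi'(0) = 1$. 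Identity (d) is then
the direct algebraic computation from $E_F = \Phi_{attr} \circ \Phi_{rep}^{-1}$
together with (a), and (e) follows because every ingredient in the
construction, namely $\widetilde{Q}$, $\widetilde{\varphi}$,
$\widetilde{\Phi}_{attr}$, $\widetilde{\Phi}_{rep}$, and the selected branches
of $P^{-1}$, depends holomorphically on $\varphi$.
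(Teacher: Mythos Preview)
Your overall architecture---define $\Psi_1$ on a fundamental block as a branch of $P^{-1}\circ\Psi_0\circ\widetilde\Phi_{attr}$ and propagate by the $g$-invariance $\Psi_0\circ\widetilde\Phi_{attr}\circ g=\Psi_0\circ\widetilde\Phi_{attr}$---is exactly what the paper does, and your treatments of (c), (d), (e) are fine. But two points in your combinatorial step are off.

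First, the assignment of branches is misdescribed. The paper fixes five explicit sheets for $P$,
\[
\MU=\MU_{1+}^P\cup\MU_{1-}^P\cup\gamma_{c1}^P,\quad
\MU'=\MU_{2-}^P\cup\MU_{4+}^P\cup\gamma_{c4}^P,\quad
\MU''=\MU_{3-}^P\cup\MU_{5+}^P\cup\gamma_{c5}^P,
\]
\[
\MU'''=\MU_{2+}^P\cup\MU_{4-}^P\cup\gamma_{c2}^P,\quad
\MU''''=\MU_{3+}^P\cup\MU_{5-}^P\cup\gamma_{c3}^P,
\]
and sets $\Psi_1=(P|_{\MU})^{-1}\circ\Psi_0\circ\widetilde\Phi_{attr}$ on $D_n\cup D_n^\sharp$, with $\MU',\MU'',\MU''',\MU''''$ used on $D_n',D_n'',D_{n-1}''',D_{n-1}''''$. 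In particular $\Psi_1(D_0^\sharp)$ lands in $\MU$ as a punctured neighbourhood of $0$, not ``adjacent to the deleted slit around $c_+$''; the regions reaching toward $c_\pm$ are $\Psi_1(D_{-1}''')\subset\MU'''$ and $\Psi_1(D_{-1}'''')\subset\MU''''$, not ``second-layer leaves at $-1/\sqrt{5}$''. Getting these wrong would break both the boundary matching and the surjectivity onto $U_\eta^P\subt\{0\}$.

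Second, you underweight the consistency check. Proposition~\ref{prop-F}\,(e) only guarantees that each individual branch of $P^{-1}$ is well-defined on its piece; it does \emph{not} tell you that the five branches agree along the shared boundary arcs of $\overline D_0,\overline D_0',\overline D_0'',\overline D_{-1}''',\overline D_{-1}'''',\overline D_0^\sharp$ (and their $g$-translates). The paper handles this with a dedicated lemma (the analogue of \cite[Lemma~5.34]{IS08}) that pins down every intersecting pair and identifies exactly which boundary segments coincide, e.g.\ $\partial_+^l D_n=\partial_+^r D_{-1}$, $\partial_-^r D_{-1}'''=\partial_-^l D_n''$, etc.; one then checks for each such pair that the two branches of $P^{-1}$ land on the same curve $\gamma_{ai}^P$ or $\gamma_{bi}^P$ from opposite sides. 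This verification, not surjectivity, is the heart of the argument, and your ``direct shape analysis'' gloss does not cover it.
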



This proposition will be proved in \S\ref{sec-para-renorm}. The map $\Psi_1$ is defined by choosing an appropriate branch of $P^{-1}\circ\Psi_0\circ\widetilde{\Phi}_{attr}$ on each domain $D_{-n}=g^n(D_0)$ etc. After proving the consistency of $\Psi_1$ (compare Figures \ref{Fig-chessboard} and \ref{Fig-U-eta-P-log}), we define
\begin{equation}
\MMR_0 F:=P\circ\psi^{-1}\in\MF_2^P \text{\quad for\quad} F=Q\circ\varphi^{-1}\in\MF_1^Q \,(\simeq\MF_1^P).
\end{equation}
Then the Main Theorem holds since $\MF_2^P\hookrightarrow\MF_1^P$ by Proposition \ref{prop-relation}.

\begin{figure}[!htpb]
  \setlength{\unitlength}{1mm}
  \centering
  \includegraphics[width=0.98\textwidth]{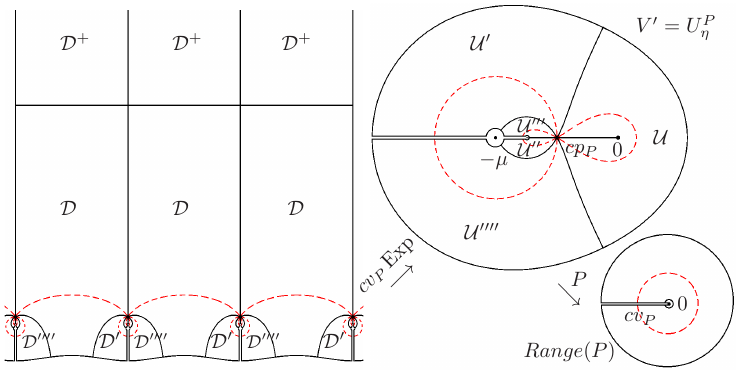}
  \caption{Domain $U_\eta^P$ and its log lift (the inverse image of $cv_P\,\Exp$). Note that these pictures are just topologically correct but not conformally precise. We choose small $\eta$ here such that the details are visible.}
  \label{Fig-U-eta-P-log}
\end{figure}

\section{Some preparations}

In this section we prepare some useful lemmas for calculations. For the proofs from Lemma \ref{lema-basic-esti} to Theorem \ref{thm-est-Fatou}, see \cite[\S 5.B]{IS08}.

\begin{lem}\label{lema-basic-esti}
\begin{enumerate}
\item If $a,b\in\C$ and $|a|>|b|$, then $|\arg(a+b)-\arg a|\leq \arcsin\big(\frac{|b|}{|a|}\big)$. In particular, if $|b|<1$, then $|\arg(1+b)|\leq \arcsin |b|$;
\item If $0\leq x\leq \tfrac{1}{2}$, then $\arcsin(x)\leq \frac{\pi}{3}\,x$.
\end{enumerate}
\end{lem}

\begin{lem}\label{lema-esti-re}
\begin{enumerate}
\item  If $\re (ze^{-\ii\theta})>t>0$ with $\theta\in\R$, then
\begin{equation}
\frac{1}{z}\in\D\left(\frac{e^{-\ii\theta}}{2t},\frac{1}{2t}\right);
\end{equation}

\item  If $H=\{z\in\C:\re (ze^{-\ii\theta})>t\}$ and $z_0\in H$ with $u=\re (z_0e^{-\ii\theta})-t$, then
\begin{equation}
\D_H(z_0,s(r))=\D\left(z_0+\frac{2ur^2e^{\ii\theta}}{1-r^2},\frac{2ur}{1-r^2}\right),
\end{equation}
where $s(r)=d_{\D}(0,r)=\log \frac{1+r}{1-r}$.
\end{enumerate}
\end{lem}

The following theorem was proved by applying the properties of cross-ratios and Golusin inequalities.

\begin{thm}[{A general estimate on Fatou coordinate}]\label{thm-est-Fatou}
Let $\Omega$ be a disk or a half plane and $f:\Omega\to\C$ a holomorphic function with $f(z)\neq z$. Suppose $f$ has a univalent Fatou coordinate $\Phi:\Omega\to\C$, i.e., $\Phi(f(z))=\Phi(z)+1$ when $z,f(z)\in\Omega$. If $z\in\Omega$ and $f(z)\in\Omega$, then
\begin{equation}
\left|\log \Phi'(z)+\log (f(z)-z)-\frac{1}{2}\log f'(z)\right|\leq \frac{1}{2}\log\frac{1}{1-r^2},
\end{equation}
where $r$ is a real number such that $0< r<1$ and $d_{\D}(0,r)=d_{\Omega}(z,f(z))$.
\end{thm}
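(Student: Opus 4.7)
The plan is to first rewrite the left-hand side in a symmetric form. Differentiating the functional equation $\Phi(f(z))=\Phi(z)+1$ gives $f'(z)=\Phi'(z)/\Phi'(f(z))$, so $\log\Phi'(z)-\tfrac{1}{2}\log f'(z)=\tfrac{1}{2}(\log\Phi'(z)+\log\Phi'(f(z)))$, and the quantity to be estimated becomes
$$\bigl|\log\bigl(\sqrt{\Phi'(z)\Phi'(f(z))}\,(f(z)-z)\bigr)\bigr|,$$
which is now manifestly symmetric in $z$ and $f(z)$. This symmetric form strongly suggests placing the two points symmetrically in a conformal model.

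Next I would invoke the integral identity $\int_{z}^{f(z)}\Phi'(\zeta)\,d\zeta=\Phi(f(z))-\Phi(z)=1$, valid along any path in the convex domain $\Omega$, and then use conformal invariance to reduce to $\Omega=\D$. Choose a M\"obius isomorphism $\nu:\Omega\to\D$ with $\nu(z)=0$ and $\nu(f(z))=r$, where $r\in(0,1)$ is determined by $d_{\D}(0,r)=d_{\Omega}(z,f(z))=d$; this forces $r=\tanh(d/2)$ and $1/(1-r^2)=\cosh^2(d/2)$. Setting $\mu=\nu^{-1}$ and $\tilde\Phi=\Phi\circ\mu:\D\to\C$, a direct M\"obius computation gives $f(z)-z=r\sqrt{\mu'(0)\mu'(r)}$ (for an appropriate branch of the square root), so
$$\sqrt{\Phi'(z)\Phi'(f(z))}\,(f(z)-z)=r\sqrt{\tilde\Phi'(0)\tilde\Phi'(r)},$$
and the theorem is equivalent to the sharp distortion inequality
$$\Bigl|\log\bigl(r\sqrt{\tilde\Phi'(0)\tilde\Phi'(r)}\bigr)\Bigr|\leq\tfrac{1}{2}\log\tfrac{1}{1-r^2}$$
for every univalent $\tilde\Phi:\D\to\C$ normalized so that $\tilde\Phi(r)-\tilde\Phi(0)=1$.

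The hard part, which I expect to be the main obstacle, is establishing this univalent distortion estimate; it controls the modulus and the argument of $r\sqrt{\tilde\Phi'(0)\tilde\Phi'(r)}$ jointly and is strictly sharper than what the classical Koebe distortion theorem yields (a naive Koebe bookkeeping gives a lower bound on $|r^2\tilde\Phi'(0)\tilde\Phi'(r)|$ that falls short of $1-r^2$ by a factor $((1-r)/(1+r))^{4}$). The natural approach is to take a holomorphic square root $G=\sqrt{\tilde\Phi'}$ (available since $\tilde\Phi'$ is nowhere zero by univalence of $\tilde\Phi$), so that $\int_0^{r}G(t)^2\,dt=1$, and then to bound $r\,G(0)G(r)$ either by a Grunsky- / area-theorem-type inequality applied to $\tilde\Phi$, or by comparing $\tilde\Phi$ with its osculating M\"obius transformation interpolating the two boundary values. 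Extremal equality occurs for a M\"obius $\tilde\Phi$ and reproduces exactly the constant $\cosh(d/2)$, which pins down the sharpness. Once this distortion bound is in place, unwinding the conformal change of variables and the symmetric reformulation of the first step immediately yields the stated estimate.
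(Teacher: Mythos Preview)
The paper does not prove this theorem itself; it refers to \cite[\S5.B]{IS08}. Your reduction is correct: the symmetric rewriting via $\Phi'(z)/\Phi'(f(z))=f'(z)$, the M\"obius transfer to $\D$, the identity $\mu(r)-\mu(0)=r\sqrt{\mu'(0)\mu'(r)}$ (valid for any M\"obius $T$ since $(T(a)-T(b))^2=T'(a)T'(b)(a-b)^2$), and the final reformulation
\[
\Bigl|\log\frac{r^2\,\tilde\Phi'(0)\tilde\Phi'(r)}{(\tilde\Phi(r)-\tilde\Phi(0))^2}\Bigr|\le\log\frac{1}{1-r^2}
\]
are all right, and this is the shape of the argument in \cite{IS08} as well.

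The step you flag as the main obstacle is not a genuine gap but exactly the diagonal case of the Grunsky inequality, so your first instinct (``Grunsky-/area-theorem-type inequality'') lands on the mark and nothing more elaborate is needed. Normalize $h(z)=(\tilde\Phi(z)-\tilde\Phi(0))/\tilde\Phi'(0)$, univalent on $\D$ with $h(0)=0$, $h'(0)=1$; the quantity above becomes $\log\bigl(r^2h'(r)/h(r)^2\bigr)$. Set $\psi(w)=1/h(1/w)$, univalent on $\{|w|>1\}$ with $\psi(w)=w+O(1)$ at infinity; a one-line computation gives $\psi'(w)=z^2h'(z)/h(z)^2$ at $z=1/w$. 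Writing the Grunsky expansion $\log\frac{\psi(w_1)-\psi(w_2)}{w_1-w_2}=-\sum_{m,n\ge1}\gamma_{mn}w_1^{-m}w_2^{-n}$ and setting $w_1=w_2=w$, the Grunsky inequality $\bigl|\sum_{m,n}\gamma_{mn}\lambda_m\lambda_n\bigr|\le\sum_n|\lambda_n|^2/n$ with $\lambda_n=w^{-n}$ yields
\[
|\log\psi'(w)|\le\sum_{n\ge1}\frac{|w|^{-2n}}{n}=-\log\bigl(1-|w|^{-2}\bigr)=\log\frac{1}{1-r^2},
\]
with equality for $\psi(w)=w+1/w$ (equivalently M\"obius $\tilde\Phi$), confirming sharpness. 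Your observation that naive Koebe bookkeeping falls short by a factor $((1-r)/(1+r))^4$ is correct and explains why the two-point Grunsky bound, not one-point distortion, is the right tool; your alternative suggestions (the square root $G=\sqrt{\tilde\Phi'}$, osculating M\"obius interpolation) are unnecessary detours.
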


\noindent \textbf{Computer checked inequalities.}
In this paper, the inequalities checked by computer are marked by star ``\,*\," in the equation numbers. These inequalities are all evaluated at explicit values. Recall that the approximate value is indicated as $x\doteqdot{0.1234}\ldots$, which means that $x\in[0.1234,0.1235]$.

\medskip

\noindent \textbf{List of constants.} We collect some constants that will be used throughout this paper:
\begin{itemize}
  \item The disk $\mathscr{D}$ related: $a_0=-0.06$, $r_0=1.07$, $r_1=1.2$;
  \item The map $Q$: $cv=cv_Q=\frac{3(8\sqrt{6}+3)}{4}\Aeq{16.94}$, $b_0=\tfrac{2(13+32\sqrt{6})}{25}\Aeq{7.31}$, $b_1=\frac{2029+256\sqrt{6}}{125}\Aeq{21.24}$;
  \item The univalent map $\varphi$: $c_{00}=-a_0=0.06$, $c_{01,max}=2r_0=2.14$;
  \item Attracting Fatou coordinates: $\theta_1=\frac{3\pi}{20}$, $u_{1,\theta_1}=8.5$, $u_{2,\theta_1}=6.1$, $u_3=22\cos\theta_1\Aeq{19.60}$, $u_4=17.3$; $\theta_2=\frac{\pi}{4}$, $u_{1,\theta_2}=9$, $u_{2,\theta_2}=6.6$;
  \item Riemann surface $X$: $R=125$, $\rho=0.03$;
  \item Some complex constants: $a_4=-0.22+0.69\ii$, $a_5=0.78+0.21\ii$, $\omega=\tfrac{8\sqrt{6}-3}{25}+\tfrac{6\sqrt{6}+4}{25}\ii\,(\doteqdot 0.6638\ldots+0.7478\ldots\ii)$;
  \item Other constants: $\eta=3$, $R_1=108$, $R_2=99$, $\varepsilon_1=0.128$, $\varepsilon_2=0.007$, $\varepsilon_3=0.014$, $\varepsilon_4=|a_4-\omega|$, $\varepsilon_5=|a_5-\omega|$, $\varepsilon_6=0.41$, $\varepsilon_7=0.82$.
\end{itemize}

\section{Covering property of $f\in\MF_0$ and subcover}\label{sec-subcover}

\begin{defi}[{Class $\MF_0$}]
We define a class of holomorphic maps:
\begin{equation}
\mathcal{F}_0:=
\left\{
\begin{array}{l}
f:Dom(f)\to\C\\
\text{\quad is holomorphic}
\end{array}
\left|
\begin{array}{l}
0\in Dom(f) \text{~open}\subset\C,~f(0)=0,~f'(0)=1,\\
f:Dom(f)\setminus\{0\}\to\C^* \text{ is a branched}\\
\text{covering with a unique critical value } cv_f, \\
\text{all critical points are of local degree 3}
\end{array}
\right.
\right\}.
\end{equation}
The cubic polynomial $p(z)=z+\sqrt{3}z^2+z^3$ (which is conformally conjugate to the cubic unicritical polynomial $\widetilde{p}(z)=z^3+2/(3\sqrt{3})$) and the degree $5$ rational map $P$ belong to $\MF_0$. Indeed, one can consider the restriction of the domains of definitions and set $Dom(p)=\C\setminus\{\frac{-\sqrt{3}+\ii}{2},\frac{-\sqrt{3}-\ii}{2}\}$ and $Dom(P)=\C\setminus\{-\mu,-1\}$.
\end{defi}

Identifying holomorphic maps as the projections of some Riemann surfaces onto a plane domain is a traditional way to understand the covering structures of holomorphic functions.
We will describe the covering properties of the maps in $\MF_0$. The idea is that all the maps can be regarded as a (partial) branched covering over the ranges and they have similar covering structures when the maps restrict on certain ``sheets".

\medskip
Let $f\in\MF_0$ and suppose that the critical value $cv=cv_f$ of $f$ is contained in $\R_-=(-\infty,0)$. We denote $\Gamma_a=(cv,0)$, $\Gamma_b=(-\infty,cv]$, $\Gamma_c=(0,+\infty)\subset\R$. Define $\C_{slit}=\C\setminus(\{0\}\cup\Gamma_b\cup\Gamma_c)$ and recall that $\BH_\pm=\{z\in\C:\pm\,\im z>0\}$ denote the upper and lower half planes respectively.

The description of the covering properties of $f\in\MF_0$ in the following is inspired by \cite[\S5.C]{IS08}. Since $\C_{slit}$ is simply connected and contains no critical values, the preimage $f^{-1}(\C_{slit})$ consists of several connected components $\MU_i$, where $i\in I$ and $I$ is an index set which is equal to $\N$ or a finite set $\{1,2,\ldots, n\}$. Each of these components is mapped isomorphically onto $\C_{slit}$ by $f$. Denote $\MU_{i\pm}=f^{-1}(\BH^{\pm})\cap\MU_i$, $\gamma_{ai}=f^{-1}(\Gamma_a)\cap \MU_i$, $\gamma_{bi\pm}=f^{-1}(\Gamma_b)\cap \overline{\MU}_{i\pm}$ and $\gamma_{ci\pm}=f^{-1}(\Gamma_c)\cap \overline{\MU}_{i\pm}$, where the closures are taken in $Dom(f)$. See Figure \ref{Fig-dom-P}.

\begin{figure}[!htpb]
  \setlength{\unitlength}{1mm}
  \centering
  \includegraphics[width=0.98\textwidth]{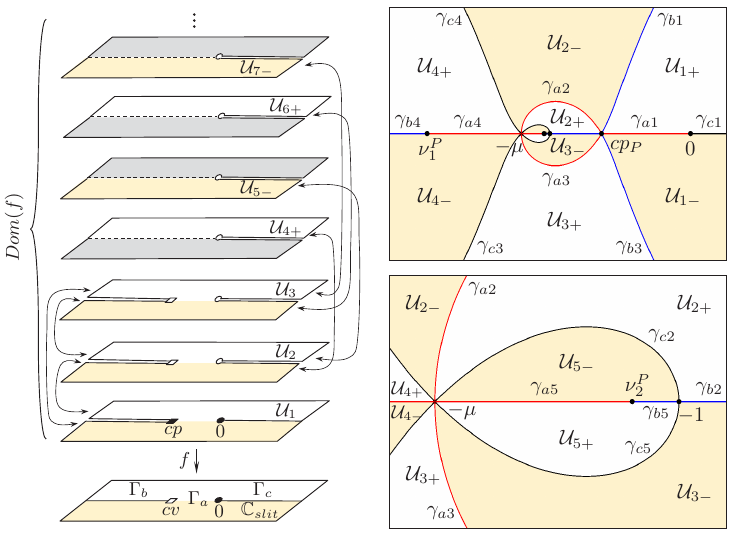}
  \caption{Left: The domain $Dom(f)$ as a Riemann surface spread over $\C$. Right: The domain $Dom(P)$ and its zoom near $[-\mu,-1]$. Some special points are marked (The superscript ``$P$" in the notations are omitted). As preimages of $cv_P$, the points $\nu_1^P$ and $\nu_2^P$ are introduced in Proposition \ref{prop:P}(b).}
  \label{Fig-dom-P}
\end{figure}

The domain $Dom(f)$ can be described as the union of $\overline{\MU}_i$'s which are glued along $\gamma_{bi\pm}$ and $\gamma_{ci\pm}$. Each $\gamma_{bi+}$ is glued with some $\gamma_{bj-}$ and vice versa. This is also true for $\gamma_{ci\pm}$. If $\gamma_{bi+}$ is glued with $\gamma_{bj-}$ and $\gamma_{bj+}$ is glued with $\gamma_{bk-}$, then $\gamma_{bk+}$ is glued with $\gamma_{bi-}$ since the critical points are of local degree $3$. Because $f$ is univalent near $0$, there must exists a component, say $\MU_1$ such that $0\in\partial\MU_1$ and $\gamma_{c1+}=\gamma_{c1-}$.

Note that $f^{-1}(cv)\cap\gamma_{b1+}\cap\gamma_{b1-}$ is a critical point of local degree $3$. We call this point the \emph{closest critical point}. 
There exist other two components $\MU_2$ and $\MU_3$ such that $\gamma_{b1+}=\gamma_{b2-}$, $\gamma_{b2+}=\gamma_{b3-}$ and $\gamma_{b3+}=\gamma_{b1-}$.
Denote $\MU_{123}:=\MU_1\cup\,\MU_2\cup\,\MU_3\cup\gamma_{b1+}\cup\gamma_{b2+}\cup\gamma_{b3+}$. Then
\begin{equation}
f|_{\MU_{123}}:\MU_{123}\to\C_{slit}\cup\Gamma_b=\C\setminus(\{0\}\cup \Gamma_c)
\end{equation} is a branched covering of degree $3$ branched over $cv$.

We already have three components $\MU_1$, $\MU_2$ and $\MU_3$. Now we consider $\gamma_{c2+}$, $\gamma_{c2-}$, $\gamma_{c3+}$ and $\gamma_{c3-}$. If $\gamma_{c2+}$ and $\gamma_{c2-}$, $\gamma_{c3+}$ and $\gamma_{c3-}$ are glued together, or $\gamma_{c2+}$ and $\gamma_{c3-}$, $\gamma_{c2-}$ and $\gamma_{c3+}$ are glued together, by removable singularity theorem, we will obtain a branched covering defined from $\EC$ to itself and this implies that $f$ is a cubic rational map.
If $f$ is neither a cubic nor quartic rational map, we obtain the following result.

\begin{prop}[Subcover like $P$]\label{prop:subcover}
Let $f\in\MF_0$. Suppose $cv_f=cv_P$ and any two of $\gamma_{c2+}$, $\gamma_{c2-}$, $\gamma_{c3+}$ and $\gamma_{c3-}$ are not glued together. Then there exists an open subset $U$ of $Dom(f)$ and a conformal mapping
$\varphi:\C\setminus(-\infty,-1]\to U$ such that $\varphi(0)=0$, $\varphi'(0)=1$ and $f=P\circ\varphi^{-1}$ on $U$.
\end{prop}

\begin{proof}
By the assumption, there exist $4$ components $\MU_4$, $\MU_5$, $\MU_6$ and $\MU_7$ such that $\gamma_{c2-}=\gamma_{c4+}$, $\gamma_{c2+}=\gamma_{c5-}$, $\gamma_{c3-}=\gamma_{c6+}$ and $\gamma_{c3+}=\gamma_{c7-}$. The further gluings for $\gamma_{c4-}$, $\gamma_{c5+}$, $\gamma_{b4\pm}$ and $\gamma_{b5\pm}$ etc., depend on particular $f$. See left picture in Figure \ref{Fig-dom-P}. Although the components $\MU_4$, $\MU_5$, $\MU_6$ and $\MU_7$ for $f$ may or may not be distinct, $f$ and $P$ have the same gluing relation up to the half components $\MU_{4+}$, $\MU_{5-}$, $\MU_{6+}$ and $\MU_{7-}$.

Denote the components and curves for $P$ by $\MU_i^P$ and $\gamma_{ai}^P$ etc., as in the right picture of Figure \ref{Fig-dom-P} (we denote $\gamma_{bi}^P=\gamma_{bi+}^P$ and $\gamma_{ci}^P=\gamma_{ci+}^P$ for simplicity). Now we define
\begin{equation}
\varphi:\C\setminus(-\infty,-1]=\C\setminus(\gamma_{a4}^P\cup\gamma_{b4}^P\cup\gamma_{a5}^P\cup\gamma_{b5}^P)\to Dom(f)
\end{equation}
by $\varphi(z)=(f|_{\MU_{i\pm}})^{-1}\circ P(z)$ on $\MU_{i\pm}^P$ for $i=1,2,3,4,5$, except in $\MU_{4-}^P$ and $\MU_{5+}^P$, where $(f|_{\MU_{7-}})^{-1}\circ P$ and $(f|_{\MU_{6+}})^{-1}\circ P$ are used respectively. Since the gluing relations are the same (if the components $\MU_{4-}^P$ and $\MU_{5+}^P$ are denoted by $\MU_{7-}^P$ and $\MU_{6+}^P$ respectively), the definition of $\varphi$ above can be extended continuously to the boundary curves $\gamma_{b1}^P$, $\gamma_{b2}^P$, $\gamma_{b3}^P$,$\gamma_{c1}^P$, $\gamma_{c2}^P$, $\gamma_{c3}^P$, $\gamma_{c4}^P$ and $\gamma_{c5}^P$. The origin is mapped to itself and $cp_P$ is mapped to the closest critical point of $f$ by $\varphi$. It is easy to see that $\varphi$ is a homeomorphism in $\C\setminus(-\infty,-1]$ and holomorphic except the union of finite number of analytic curves. By the removable singularity theorem, it follows that $\varphi$ is conformal from $\C\setminus(-\infty,-1]$ onto its image. By definition, we have $f=P\circ\varphi^{-1}$, $\varphi(0)=0$ and $\varphi'(0)=1$. The proof is finished if we set $U:=\varphi(\C\setminus(-\infty,-1])\subset Dom(f)$.
\end{proof}

By the same reason of \cite[Theorem 3.2]{IS08} (see also \cite[\S 1]{Che22}), $\MF_0$ is invariant under the parabolic renormalization $\MMR_0$.
Moreover, for any $f\in\MF_0$, its parabolic renormalization $\MMR_0(f)$ can be extended to a holomorphic map which is a branched covering of infinite degree and the local degree at each critical point is $3$.

Note that the cubic unicritical parabolic map $\widetilde{p}(z)=z^3+2/(3\sqrt{3})$ is conformally conjugate to $p(z)=z+\sqrt{3}z^2+z^3$ and $p\in\MF_0$. Hence $\MMR_0(\widetilde{p})$ can be also defined.
By Proposition \ref{prop:subcover}, the restriction of $\MMR_0(\widetilde{p})$ in some domain can be written as $P\circ\varphi^{-1}$, where $\varphi:\C\setminus(-\infty,-1]\to\C$ is a normalized univalent map. Since $V\subset \C\setminus(-\infty,-1]$ by Proposition \ref{prop-V-V'}, it follows that $\MMR_0(\widetilde{p})\in\MF_1$.
On the other hand, every $f\in\MF_1$ is a partial branched covering: some points in the range of $f$ are covered $5$ times while some points are covered less. This implies that $\widetilde{p}\not\in\MF_1$.

Consider the quartic polynomial $q(z)=z+\frac{2+b}{2} z^2+\frac{1+2b}{3} z^3+\frac{b}{4} z^4$, where $b=\frac{1}{6}(2+\sqrt{2}\ii)$ such that $q$ has a double critical point $-1$ and a simple critical point $-\frac{1}{b}$ satisfying $q(-\frac{1}{b})=0$. We have $q\in\MF_0$, $q\not\in\MF_1$ and $\MMR_0(q)\in\MF_1$.

\section{Passing from $P$ to $Q$}\label{sec-P-to-Q}

We introduce the rational map $Q$ in \eqref{equ-Q-defi} because it will be more easier to obtain the estimates of the univalent functions in the complement of a disk.
Recall that $\psi_1(\zeta)=-4\zeta/(1+\zeta)^2$ and $\psi_0(\zeta)=-4/\zeta$. In particular, $\psi_1:\EC\setminus\overline{\D}\to\C\setminus(-\infty,-1]$ is a conformal mapping. In the following, we use
\begin{equation}
\psi_1^{-1}(z)=\tfrac{1+\sqrt{z+1}}{1-\sqrt{z+1}}:\C\setminus(-\infty,-1]\to\EC\setminus\overline{\D}
\end{equation}
to denote the inverse of $\psi_1$, where $\arg\sqrt{\cdot}\in(-\frac{\pi}{2},\frac{\pi}{2}]$.
If $x\leq -1$, we unify the notation and denote $\psi_1^{-1}(x):=\frac{1+\ii\sqrt{-x-1}}{1-\ii\sqrt{-x-1}}=\frac{2+x}{-x}+\frac{2\sqrt{-x-1}}{-x}\ii$.

Recall that $\mu=11-4\sqrt{6}>1$ and note that $\mu-1=(\sqrt{6}-2)^2$. Let $cp_P=1-\tfrac{2}{3}\sqrt{6}\Aeq{-0.6329}$, $cv_P=-\tfrac{16}{3(8\sqrt{6}+3)}\Aeq{-0.2360}$, $\nu_1^P\Aeq{-1.8704}$ and $\nu_2^P\Aeq{-1.0387}$ be the numbers obtained in Proposition \ref{prop:P}.
The following result can be obtained by a direct calculation.

\begin{lem}\label{lema-cp-cv-Q}
\begin{enumerate}
\item The maps $P$ and $Q$ are related by $Q=\psi_0^{-1}\circ P\circ \psi_1$;

\item $Q$ has $6$ critical points $\omega$, $\overline{\omega}$, $1$, $-1$, $cp$ and $cp'$ whose local degrees are $4$, $4$, $4$, $6$, $3$ and $3$ respectively, where
\renewcommand\theequation{\thesection.\arabic{equation}*}
\begin{align}
\omega:=&~\psi_1^{-1}(-\mu)=\tfrac{2-\mu}{\mu}+\tfrac{2\sqrt{\mu-1}}{\mu}\ii\\
=&~\tfrac{8\sqrt{6}-3}{25}+\tfrac{6\sqrt{6}+4}{25}\ii\,(\doteqdot 0.6638\ldots+0.7478\ldots\ii)\in\partial\D, \\
cp:=&~\psi_1^{-1}(cp_P)=\tfrac{1+\sqrt{cp_P+1}}{1-\sqrt{cp_P+1}}=\tfrac{1}{5}\Big(1+4\sqrt{6}+2\sqrt{18+2\sqrt{6}}\Big)\Aeq{4.0737}, \\
cp':=&~\tfrac{1}{cp_P}=\tfrac{1-\sqrt{cp_P+1}}{1+\sqrt{cp_P+1}}=\tfrac{1}{5}\Big(1+4\sqrt{6}-2\sqrt{18+2\sqrt{6}}\Big)\Aeq{0.2454};
\end{align}

\item $Q$ has $3$ critical values: $\infty =Q(\omega)=Q(\overline{\omega})$, $0=Q(1)=Q(-1)$ and
\renewcommand\theequation{\thesection.\arabic{equation}*}
\begin{equation}
\qquad cv=cv_Q:=Q(cp)=Q(cp')=\psi_0^{-1}(cv_P)=\tfrac{3(8\sqrt{6}+3)}{4}\Aeq{16.9469};
\end{equation}

\item $Q^{-1}(cv)=\{cp,cp',\nu_1,\overline{\nu}_1,\nu_2,\overline{\nu}_2\}$, where
\renewcommand\theequation{\thesection.\arabic{equation}*}
\begin{align}
\nu_1&=\psi_1^{-1}(\nu_1^P)=\tfrac{2+\nu_1^P}{-\nu_1^P}+\tfrac{2\sqrt{-\nu_1^P-1}}{-\nu_1^P}\ii \,(\doteqdot 0.0692\ldots+0.9975\ldots\ii)\in\partial\D \text{\quad and}  \\
\nu_2&=\psi_1^{-1}(\nu_2^P)=\tfrac{2+\nu_2^P}{-\nu_2^P}+\tfrac{2\sqrt{-\nu_2^P-1}}{-\nu_2^P}\ii \,(\doteqdot 0.9254\ldots+0.3788\ldots\ii)\in\partial\D.
\end{align}
\end{enumerate}
\end{lem}

Although $Q$ is a rational map defined in the whole $\EC$, in the following we are mainly interested in the restriction of $Q$ in the outside of the unit disk.
The domain of definition of $Q$ can be decomposed as we just did for $P$.
Define $\MU_{i\pm}^Q=\psi_1^{-1}(\MU_{i\pm}^P)$, $\gamma_{ai}^Q=\psi_1^{-1}(\gamma_{ai}^P)$ and $\Gamma_a^Q=\psi_0^{-1}(\Gamma_a^P)$ etc. Then $\Gamma_a^Q=(cv,+\infty)$, $\Gamma_b^Q=(0,cv]$, $\Gamma_c^Q=(-\infty,0)$, $\gamma_{a1}^Q=(cp,+\infty)$, $\gamma_{b2}^Q=(1,cp)$, $\gamma_{c1}^Q=(-\infty,-1)$. See Figure \ref{Fig-dom-Q}.

\begin{figure}[!htpb]
  \setlength{\unitlength}{1mm}
  \centering
  \includegraphics[width=0.98\textwidth]{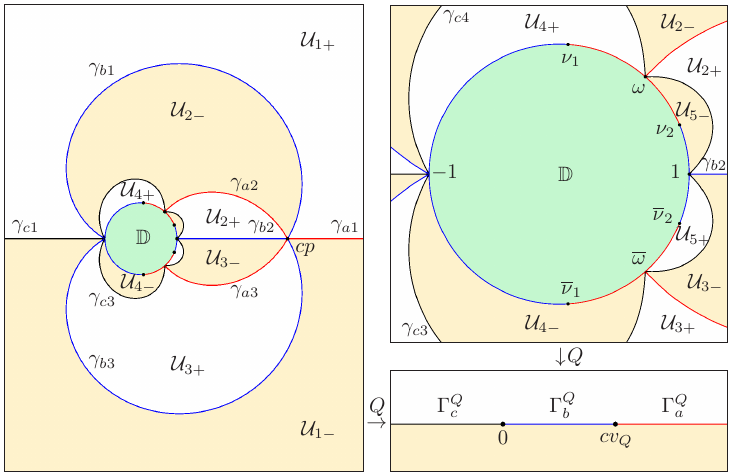}
  \caption{The domain of $Q$ with partition by curves. The top right picture is a zoom of the left picture near the unit disk. Note that the superscript ``$Q$" has been omitted in most of the notations.}
  \label{Fig-dom-Q}
\end{figure}

The map $Q$ maps each $\MU_{i\pm}^Q$ isomorphically onto $\{z\in\C:\pm\,\im z>0\}$ and maps $\gamma_{ai}^Q$ homeomorphically onto $\Gamma_a^Q$ etc. Denote $\MU_{123}^Q:=\MU_1^Q\cup\,\MU_2^Q\cup\,\MU_3^Q\cup\gamma_{b1}^Q\cup\gamma_{b2}^Q\cup\gamma_{b3}^Q=\psi_1^{-1}(\MU_{123}^P)$. Then
\begin{equation}
Q|_{\MU_{123}^Q}:\MU_{123}^Q\to\C_{slit}\cup\Gamma_b^Q=\C\setminus(\{0\}\cup \Gamma_c^Q)
\end{equation}
is a branched covering of degree $3$ branched over $cv_Q$.

\begin{proof}[Proof of Proposition \ref{prop-relation} assuming Proposition \ref{prop-V-V'}]
For (a) and (b), the reader can refer to \cite[\S5.D]{IS08} for a completely similar proof. We only prove the statement on $f''(0)$ in Part (c). Note that one can write $Q(\zeta)=\zeta+b_0+\mathcal{O}(\tfrac{1}{\zeta})$ near $\infty$, where $b_0=\tfrac{2(13+32\sqrt{6})}{25}\Aeq{7.31}$ (see Lemma \ref{lema-Q}). Since $\widehat{\varphi}(\zeta)=\zeta+c_0+\mathcal{O}(\tfrac{1}{\zeta})$ for some $c_0\in\C$ as $\zeta\to \infty$, then $F(z)=Q\circ\widehat{\varphi}^{-1}(z)=z+(b_0-c_0)+\mathcal{O}(\tfrac{1}{z})$ near $\infty$. Therefore,
\begin{equation}
f(z)=\psi_0\circ F\circ\psi_0^{-1}(z)=z+\tfrac{1}{4}(b_0-c_0)z^2+\mathcal{O}(z^3).
\end{equation}
Hence we have $f''(0)=\frac{1}{2}(b_0-c_0)$. By Lemma \ref{lema-esti-varphi}(a), we have $|c_0-c_{00}|\leq c_{01,max}$, i.e., $|\tfrac{c_0}{2}-0.03|\leq 1.07$. Therefore, we have $|f''(0)-(\tfrac{b_0}{2}-0.03)|\leq 1.07$. Note that $\tfrac{b_0}{2}\in[3.65,3.66]$. We have $2.5\leq |f''(0)|\leq 4.7$.
\end{proof}

In the following sections, if there is no confusion, we will drop the superscript ``$Q$" in the notation $\MU_i^Q$, $\gamma_{ai}^Q$ etc., and denote them by $\MU_i$, $\gamma_{ai}$ etc., for simplicity.

\section{Estimates on $Q$: Part I}\label{sec-esti-Q-I}

The numerical estimates will begin from this section. In the following two sections, we give some estimates on the map $Q$.
Let $U_\eta^Q=\psi_1^{-1}(U_\eta^P)=\psi_1^{-1}(V')\subset\EC\setminus\OD$ and $\mathscr{D}=\D(-0.06,1.07)$ be defined in \S\ref{sec-definitions}.

\begin{lem}[see Figure \ref{Fig-E} on the left]\label{lema-D-E}
Let $\eta=3$, $\varepsilon_1=0.128$, $\varepsilon_2=0.007$ and $\varepsilon_3=0.014$. Then
\begin{enumerate}
\item $\EC\setminus (U_\eta^Q\cup\overline{\D})$ is covered by $\D(-1,\varepsilon_1)\cup\D(1,\varepsilon_2)\cup\D(\omega,\varepsilon_3)\cup\D(\overline{\omega},\varepsilon_3)$;
\item The closure of $\D\cup\D(-1,\varepsilon_1)\cup\D(1,\varepsilon_2)\cup\D(\omega,\varepsilon_3)\cup\D(\overline{\omega},\varepsilon_3)$ is contained in the disk $\mathscr{D}$.
\end{enumerate}
\end{lem}

\begin{figure}[!htpb]
  \setlength{\unitlength}{1mm}
   \centering
  \includegraphics[width=0.98\textwidth]{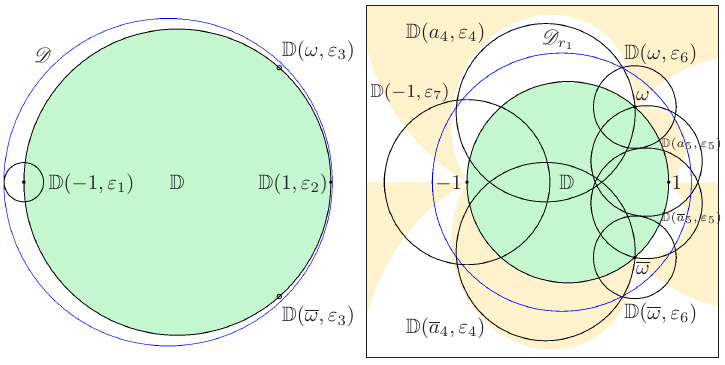}
  \caption{Left: The disk $\mathscr{D}$ covers $\OD(-1,\varepsilon_1)$, $\OD(1,\varepsilon_2)$, $\OD(\omega,\varepsilon_3)$, $\OD(\overline{\omega},\varepsilon_3)$ and $\OD$.  Right: The closure of the disk $\mathscr{D}_{r_1}$ is covered by the union of $7$ disks, which are introduced in Lemmas \ref{lema-a4} and \ref{lema-E-r1}.}
  \label{Fig-E}
\end{figure}

\begin{proof}
(a) By the definition of $U_\eta^P$ (see \S\ref{sec-definitions}) and the relation between $P$ and $Q$ (see Lemma \ref{lema-cp-cv-Q}), one can see that $\EC\setminus (U_\eta^Q\cup\OD)$ consists of $4$ connected components $W'$, $W''$, $W^+$ and $W^-$ such that $-1\in\partial W'$, $1\in\partial W''$, $\omega\in \partial W^+$ and $\overline{\omega}\in\partial W^-$ (note that $\psi_1(-1)=\infty$, $\psi_1(1)=-1$ and $\psi_1(\omega)=\psi_1(\overline{\omega})=-\mu$).
Moreover, $|Q(\zeta)|\leq cv\,e^{-2\pi\eta}$ for $\zeta\in W'\cup W''$ and $|Q(\zeta)|\geq cv\,e^{2\pi\eta}$ for $\zeta\in W^+\cup W^-$.
If we can show that $|Q(\zeta)|> cv\,e^{-2\pi\eta}$ on $\partial\D(-1,\varepsilon_1)\cup\partial\D(1,\varepsilon_2)$ and $|Q(\zeta)|< cv\,e^{2\pi\eta}$ on $\partial\D(\omega,\varepsilon_3)\cup\partial\D(\overline{\omega},\varepsilon_3)$,
then we have $\overline{W}'\subset\D(-1,\varepsilon_1)$, $\overline{W}''\subset\D(1,\varepsilon_2)$, $\overline{W}^+\subset\D(\omega,\varepsilon_3)$ and $\overline{W}^-\subset\D(\overline{\omega},\varepsilon_3)$, and the result follows.

\medskip
Note that $\omega=\tfrac{8\sqrt{6}-3}{25}+\tfrac{6\sqrt{6}+4}{25}\ii\,(\doteqdot 0.6638\ldots+0.7478\ldots\ii)\in\partial\D$. We have $|\omega-1|<1$ and $2\,\im\omega=|\omega-\overline{\omega}|>1$. From \eqref{equ-Q-defi} and Lemma \ref{lema-cp-cv-Q} we have
\begin{equation}\label{equ:Q-another-form}
Q(\zeta)=\frac{(\zeta+1)^6(\zeta-1)^4}{\zeta(\zeta-\omega)^4(\zeta-\overline{\omega})^4}.
\end{equation}
If $|\zeta+1|=\varepsilon_1$, then
\renewcommand\theequation{\thesection.\arabic{equation}*}
\begin{equation}
\begin{split}
|Q(\zeta)|
&\geq \frac{\varepsilon_1^6 (2-\varepsilon_1)^4}{(1+\varepsilon_1)(2+\varepsilon_1)^8}\,(\doteqdot 1.13\ldots\times 10^{-7}) \\
&\gy cv\, e^{-2\pi\eta} \,(\doteqdot 1.10\ldots\times 10^{-7}). \retainlabel{equ:y1}
\end{split}
\end{equation}
If $|\zeta-1|=\varepsilon_2$, then
\renewcommand\theequation{\thesection.\arabic{equation}*}
\begin{equation}
\begin{split}
|Q(\zeta)|
&\geq \frac{(2-\varepsilon_2)^6\varepsilon_2^4}{(1+\varepsilon_2)(1+\varepsilon_2)^8}\,(\doteqdot 1.41\ldots\times 10^{-7}) \\
&\gy cv\, e^{-2\pi\eta} \,(\doteqdot 1.10\ldots\times 10^{-7}).\retainlabel{equ:y2}
\end{split}
\end{equation}
If $|\zeta-\omega|=\varepsilon_3$, then
\renewcommand\theequation{\thesection.\arabic{equation}*}
\begin{equation}\label{equ:y3}
|Q(\zeta)|
\leq \frac{(2+\varepsilon_3)^6(1+\varepsilon_3)^4}{(1-\varepsilon_3)\varepsilon_3^4(1-\varepsilon_3)^4}\,(\doteqdot 1.97\ldots\times 10^9)
\ly cv\, e^{2\pi\eta} \,(\doteqdot 2.60\ldots\times 10^9).
\end{equation}
By the symmetry of $Q$, we have the same estimate of $Q$ on $\partial\D(\overline{\omega},\varepsilon_3)$ as \eqref{equ:y3}.

\medskip
(b) To prove that $\OD(\widetilde{a},\widetilde{r})$ is contained in $\mathscr{D}=\D(a_0,r_0)$, where $a_0=-0.06$ and $r_0=1.07$, it is sufficient to verify that $|\widetilde{a}-a_0|<r_0-\widetilde{r}$. By direct calculations,
\begin{align}
|0-a_0|-(r_0-1) &=0.06+1-1.07=-0.01<0, \\
|-1-a_0|-(r_0-\varepsilon_1) &=0.94+0.128-1.07=-0.002<0, \\
|1-a_0|-(r_0-\varepsilon_2) &=1.06+0.007-1.07=-0.003<0,
\end{align}
and
\renewcommand\theequation{\thesection.\arabic{equation}*}
\begin{align}
|\omega-a_0|^2-(r_0-\varepsilon_3)^2
=&~\big(\tfrac{8\sqrt{6}-3}{25}+0.06\big)^2+\big(\tfrac{6\sqrt{6}+4}{25}\big)^2-1.056^2 \\
(\doteqdot&~ -0.031\ldots)\ly 0. \retainlabel{1.056-omega}
\end{align}
The proof is finished.
\end{proof}

\begin{proof}[Proof of Proposition \ref{prop-V-V'}]
By Lemma \ref{lema-D-E}, we have
\begin{equation}
\EC\setminus \mathscr{D}\subset \EC\setminus (\OD(-1,\varepsilon_1)\cup\OD(1,\varepsilon_2)\cup\OD(\omega,\varepsilon_3)\cup\OD(\overline{\omega},\varepsilon_3)\cup\OD)\subset U_\eta^Q\subset\EC\setminus\OD.
\end{equation}
Therefore, we have $\overline{V}=\psi_1(\EC\setminus \mathscr{D})\subset \psi_1(U_\eta^Q)=U_\eta^P=V'$.
This ends the proof of Proposition \ref{prop-V-V'}.
\end{proof}

\begin{lem}\label{lema-a4}
Let $a_4=-0.22+0.69\ii$, $a_5=0.78+0.21\ii$, $\varepsilon_4=|a_4-\omega|$ and $\varepsilon_5=|a_5-\omega|$. Then
\begin{enumerate}
\item $\OD(a_4,\varepsilon_4)\setminus\OD\subset\MU_{4+}$ and $\OD(\overline{a}_4,\varepsilon_4)\setminus\OD\subset\MU_{4-}$; and
\item $\big(\partial\D(a_5,\varepsilon_5)\cap\BH_+\big)\setminus\OD\subset \MU_{2+}$ and $\big(\partial\D(\overline{a}_5,\varepsilon_5)\cap\BH_-\big)\setminus\OD\subset \MU_{3-}$.
\end{enumerate}
\end{lem}

The proof of this lemma needs elaborate estimates. We postpone the proof to Appendix \ref{sec:appendix}. See the picture in Figure \ref{Fig-E} on the right and Figure \ref{Fig-a4-a5}.

\medskip
Recall that $\MU_i=\MU_{i+}\cup\MU_{i-}\cup\gamma_{ai}$ for $i=1,2,3$ and $\MU_{123}=\MU_1\cup\,\MU_2\cup\,\MU_3\cup\gamma_{b1}\cup\gamma_{b2}\cup\gamma_{b3}$ are defined in \S\ref{sec-P-to-Q}. We denote $\MU_{12}':=\MU_1\cup\MU_{2-}\cup\MU_{3+}\cup\gamma_{b1}\cup\gamma_{b3}$. For $r>1$, we define $\mathscr{D}_r:=\D(a_0, r_0 r)$.

\begin{lem}[see Figure \ref{Fig-E} on the right]\label{lema-E-r1}
Let $R=125$, $\rho=0.03$, $\varepsilon_6=0.41$, $\varepsilon_7=0.82$ and $r_1=1.2$. We have
\begin{enumerate}
\item If $\zeta\in\OD(\omega,\varepsilon_6)\setminus\big(\D(a_4,\varepsilon_4)\cup \D(a_5,\varepsilon_5)\big)$ or $\zeta\in\OD(\overline{\omega},\varepsilon_6)\setminus\big(\D(\overline{a}_4,\varepsilon_4)\cup \D(\overline{a}_5,\varepsilon_5)\big)$, then $|Q(\zeta)|>R$;
\item If $\zeta\in\OD(-1,\varepsilon_7)\setminus\big(\D(a_4,\varepsilon_4)\cup \D(\overline{a}_4,\varepsilon_4)\big)$, then $|Q(\zeta)|<\rho$;
\item $\overline{\mathscr{D}}_{r_1}$ is covered by the union of $7$ disks: $\D(a_4,\varepsilon_4)$, $\D(\overline{a}_4,\varepsilon_4)$, $\D(a_5,\varepsilon_5)$, $\D(\overline{a}_5,\varepsilon_5)$, $\D(\omega,\varepsilon_6)$, $\D(\overline{\omega},\varepsilon_6)$ and $\D(-1,\varepsilon_7)$;
\item If $\zeta\in\overline{\MU}_1$ and $|Q(\zeta)|\geq \rho$, then $\zeta\in\EC\setminus\overline{\mathscr{D}}_{r_1}$;
\item If $\zeta\in\overline{\MU}_{12}'$ and $\rho\leq |Q(\zeta)|\leq R$, then $\zeta\in\C\setminus\overline{\mathscr{D}}_{r_1}$; and
\item If $\zeta\in\overline{\MU}_{123}\setminus\big(\OD(a_5,\varepsilon_5)\cup\OD(\overline{a}_5,\varepsilon_5)\big)$ and $\rho\leq |Q(\zeta)|\leq R$, then $\zeta\in\C\setminus\overline{\mathscr{D}}_{r_1}$.
\end{enumerate}
\end{lem}

\begin{proof}
To unify notations, we denote $a_6:=\omega$ and $a_7:=-1$. For $4\leq j\leq 7$, we parameterize the upper and lower half parts of the circle $\partial\D(a_j,\varepsilon_j)$:
\begin{equation}\label{equ:y-j-pm}
y_j^\pm(x)=\im a_j\pm\sqrt{\varepsilon_j^2-(x-\re a_j)^2}.
\end{equation}
Based on \eqref{equ:Q-another-form}, for $x,x'\in[\re a_j-\varepsilon_j, \re a_j+\varepsilon_j]$ we define
\begin{align}
\xi_{j,1}^\pm(x)&=\big((x+1)^2+(y_j^\pm(x))^2\big)^3, & \xi_{j,2}^\pm(x)&=\big((x-1)^2+(y_j^\pm(x))^2\big)^2, \\
\xi_{j,3}^\pm(x)&=\big(x^2+(y_j^\pm(x))^2\big)^{1/2}, & \xi_{j,4}^\pm(x)&=\big((x-\re\omega)^2+(y_j^\pm(x)-\im\omega)^2\big)^2, \\
&  & \xi_{j,5}^\pm(x)&=\big((x-\re\omega)^2+(y_j^\pm(x)+\im\omega)^2\big)^2,
\end{align}
and
\begin{equation}\label{equ:Xi-j-pm}
\Xi_j^\pm(x,x'):=\frac{\xi_{j,1}^\pm(x')\cdot \xi_{j,2}^\pm(x')}{\xi_{j,3}^\pm(x)\cdot\xi_{j,4}^\pm(x)\cdot\xi_{j,5}^\pm(x)}.
\end{equation}
Then $|Q(\zeta)|=\Xi_j^\pm(x,x)$ if $\zeta=x+y_j^\pm(x)\in\partial\D(a_j,\varepsilon_j)$. The function $\Xi_j^\pm(x,x')$ will be used to estimate the upper and lower bounds of $|Q(\zeta)|$.

\medskip
(a) Consider the following $4$ arcs $\ell_{6,0}$, $\ell_{6,1}$, $\ell_{6,2}^+$ and $\ell_{6,2}^-$:
\begin{align}
\ell_{6,0}&:=y_4^+(x) \text{ with } x_{6,0}\leq x\leq x_{6,1},
& \ell_{6,1}&:=y_5^+(x) \text{ with } x_{6,1}\leq x\leq x_{6,2}^+, \\
\ell_{6,2}^+&:=y_6^+(x) \text{ with } x_{6,0}\leq x\leq x_{6,3},
& \ell_{6,2}^-&:=y_6^-(x) \text{ with } x_{6,2}^-\leq x\leq x_{6,3},
\end{align}
where $x_{6,0}=0.54$,  $x_{6,1}=\re\omega=\tfrac{8\sqrt{6}-3}{25}\Aeq{0.6638}$, $x_{6,2}^+=1.07$, $x_{6,2}^-=1.067$ and $x_{6,3}=\re\omega+\varepsilon_6=\tfrac{8\sqrt{6}-3}{25}+0.41\Aeq{1.0738}$.
Note that the boundary of $\D(\omega,\varepsilon_6)\setminus\big(\D(a_4,\varepsilon_4)\cup \D(a_5,\varepsilon_5)\big)$ consists of three arcs $\widetilde{\ell}_{6,0}\subset\partial\D(a_4,\varepsilon_4)$, $\widetilde{\ell}_{6,1}\subset\partial\D(a_5,\varepsilon_5)$ and $\widetilde{\ell}_{6,2}\subset\partial\D(\omega,\varepsilon_6)$.
We have $\widetilde{\ell}_{6,0}\subset \ell_{6,0}$, $\widetilde{\ell}_{6,1}\subset \ell_{6,1}$ and $\widetilde{\ell}_{6,2}\subset \ell_{6,2}^+\cup\ell_{6,2}^-$ from the following estimates:
\renewcommand\theequation{\thesection.\arabic{equation}*}
\begin{align}
(x_{6,0}-\re\omega)^2+(y_4^+(x_{6,0})-\im\omega)^2-\varepsilon_6^2 \Aeq{0.0048} &\gy 0, \retainlabel{0.1572-x60omega}\\
(x_{6,2}^+-\re\omega)^2+(y_5^+(x_{6,2}^+)-\im\omega)^2- \varepsilon_6^2\Aeq{0.0017} & \gy 0, \retainlabel{0.1525-x62omega}\\
(x_{6,0}-\re a_4)^2+(y_6^+(x_{6,0})-\im a_4)^2-\varepsilon_4^2 \Aeq{-0.0055} &\ly 0, \retainlabel{0.7785-x60a4}\\
(x_{6,2}^--\re a_5)^2+(y_6^-(x_{6,2}^-)-\im a_5)^2-\varepsilon_5^2\Aeq{-0.0057} & \ly 0. \retainlabel{0.2933-x62a5}
\end{align}
In the following we prove that $|Q(\zeta)|>R=125$ on $\ell_{6,0}\cup\ell_{6,1}\cup\ell_{6,2}^+\cup\ell_{6,2}^-$. Then Part (a) follows immediately.

\medskip
(\textbf{Case a.1})  Let $\zeta(x)=x+y_4^+(x)\ii\in\ell_{6,0}$ with $x\in [x_{6,0}, x_{6,1}]$. The line passing through $-1$ and $a_4$ has the equation $y=\frac{23}{26}(x+1)$. The arc $\ell_{6,0}$ lies below this line. Indeed, we only need to verify the point $\zeta(x_{6,0})$:
\renewcommand\theequation{\thesection.\arabic{equation}*}
\begin{align}\retainlabel{0.2174-y4}
y_4^+(x_{6,0})-\tfrac{23}{26}(x_{6,0}+1)\Aeq{-0.2174}\ly 0.
\end{align}
Similarly, the arc $\ell_{6,0}$ lies above the line passing through $1$ and $a_4$.
Then it is easy to see that $|\zeta(x)+1|$, $|\zeta(x)-1|$, $|\zeta(x)|$, $|\zeta(x)-\omega|$ and $|\zeta(x)-\overline{\omega}|$ decrease monotonously for $x\in[x_{6,0},x_{6,1}]$. Denote $x_{6,0,1}=0.6$. If $x\in[x_{6,0},x_{6,0,1}]$, then $|Q(\zeta)|\geq\Xi_4^+(x_{6,0},x_{6,0,1})$, where $\Xi_4^+$ is defined in \eqref{equ:Xi-j-pm}. If $x\in[x_{6,0,1},x_{6,1}]$, then $|Q(\zeta)|\geq\Xi_4^+(x_{6,0,1},x_{6,1})$. One can verify that
\renewcommand\theequation{\thesection.\arabic{equation}*}
\begin{align}
\Xi_4^+(x_{6,0},x_{6,0,1})\Aeq{140.6}\gy 125, \quad \Xi_4^+(x_{6,0,1},x_{6,1})\Aeq{217.0}\gy 125.\retainlabel{217.07-xi4}
\end{align}
This implies that $|Q(\zeta)|>R=125$ for all $\zeta\in\ell_{6,0}$.

\medskip
(\textbf{Case a.2})  Let $\zeta(x)=x+y_5^+(x)\ii\in\ell_{6,1}$ with $x\in [x_{6,1}, x_{6,2}^+]$. We denote the line passing through $\overline{\omega}$ and $a_5$ by $\iota_{6,1}$ with slope $s_{6,1}=\frac{\im (a_5+\omega)}{\re(a_5-\omega)}>0$. Let $\zeta(\widetilde{x}_{6,1})$ be one of the points in $\iota_{6,1}\cap\partial\D(a_5,\varepsilon_5)$ with larger imaginary part. Then $\widetilde{x}_{6,1}=\re a_5+\varepsilon_5/\sqrt{1+s_{6,1}^2}\Aeq{0.8462}$.
It implies that $|\zeta(x)-\overline{\omega}|$ increases on $[x_{6,1},\widetilde{x}_{6,1}]$ and decreases on $[\widetilde{x}_{6,1},x_{6,2}^+]$. Note that the center $a_5$ lies below the line passing through $1$ and $\omega$. Indeed, this is true because of the following estimate:
\renewcommand\theequation{\thesection.\arabic{equation}*}
\begin{align}\retainlabel{0.8462-tilde-x}
\im a_5-\tfrac{\im\omega}{\re\omega-1}(\re a_5-1)\Aeq{-0.2794}\ly 0.
\end{align}
Hence $|\zeta(x)-1|$ decreases about $x\in [x_{6,1}, x_{6,2}^+]$. Moreover, it is easy to see that $|\zeta(x)+1|$, $|\zeta(x)|$ and $|\zeta(x)-\omega|$ increase on $[x_{6,1}, x_{6,2}^+]$.

Define $x_{6,1,1}=0.99$, $x_{6,1,2}=1.05$ and $x_{6,1,3}=1.06$. Note that $|\zeta-\overline{\omega}|\leq |a_5-\overline{\omega}|+\varepsilon_5$ for $\zeta\in\ell_{6,1}$. If $x\in[x_{6,1},x_{6,1,1}]$, then
\begin{align}\retainlabel{132.59-Q-zeta}
|Q(\zeta)|\geq \frac{\xi_{5,1}^+(x_{6,1})\cdot \xi_{5,2}^+(x_{6,1,1})}{\xi_{5,3}^+(x_{6,1,1})\cdot\xi_{5,4}^+(x_{6,1,1})\cdot (|a_5-\overline{\omega}|+\varepsilon_5)^4}
\Aeq{132.5}\gy 125.
\end{align}
Define
\begin{align}
\widehat{\Xi}_5^+(x,x'):=\frac{\xi_{5,1}^+(x)\cdot \xi_{5,2}^+(x')}{\xi_{5,3}^+(x')\cdot\xi_{5,4}^+(x')\cdot\xi_{5,5}^+(x)}.
\end{align}
Then $|Q(\zeta)|\geq \widehat{\Xi}_5^+(x,x')$ for any interval $[x,x']\subset[x_{6,1,1}, x_{6,2}^+]$. We have following estimates:
\renewcommand\theequation{\thesection.\arabic{equation}*}
\begin{align}
\widehat{\Xi}_5^+(x_{6,1,1},x_{6,1,2})\Aeq{137.6}&\gy 125, ~~ \widehat{\Xi}_5^+(x_{6,1,2},x_{6,1,3})\Aeq{141.4}  \gy 125, \quad\qquad  \retainlabel{252.32-iota}\\
\widehat{\Xi}_5^+(x_{6,1,3},x_{6,2}^+)\Aeq{126.5}&\gy 125.  \retainlabel{150.16-iota}
\end{align}
This implies that $|Q(\zeta)|>R=125$ for all $\zeta\in\ell_{6,1}$.

\medskip
(\textbf{Case a.3})  Let $\zeta(x)=x+y_6^+(x)\ii\in\ell_{6,2}^+$ with $x\in [x_{6,0}, x_{6,3}]$. Then $\xi_{6,4}^+=\xi_{6,4}^+(x)=\varepsilon_6^4$ for all $x\in [x_{6,0}, x_{6,3}]$.
Let $\iota_{6,2}$, $\iota_{6,3}$ and $\iota_{6,4}$,  be the lines passing through $\omega$ and $-1$, $\omega$ and $1$, $\omega$ and $0$,  with slopes $s_{6,2}=\frac{\im \omega}{\re\omega+1}>0$, $s_{6,3}=\frac{\im \omega}{\re\omega-1}<0$ and $s_{6,4}=\frac{\im \omega}{\re\omega}>0$ respectively. We use $\zeta(\widetilde{x}_{6,i})$, where $i=2,3,4$, to denote one of the points in $\iota_{6,i}\cap\partial\D(\omega,\varepsilon_6)$ with larger imaginary part. Then $\widetilde{x}_{6,2}=\re \omega+\varepsilon_6/\sqrt{1+s_{6,2}^2}\Aeq{1.0377}$, $\widetilde{x}_{6,3}=\re \omega-\varepsilon_6/\sqrt{1+s_{6,3}^2}\Aeq{0.4957}$ and $\widetilde{x}_{6,4}=\re \omega+\varepsilon_6/\sqrt{1+s_{6,4}^2}\Aeq{0.9360}$.
It implies that $|\zeta(x)+1|$ increases on $[x_{6,0},\widetilde{x}_{6,2}]$ and decreases on $[\widetilde{x}_{6,2},x_{6,3}]$, $|\zeta(x)-1|$ decreases on $[x_{6,0},x_{6,3}]$, and $|\zeta(x)|$ increases on $[x_{6,0},\widetilde{x}_{6,4}]$ and decreases on $[\widetilde{x}_{6,4},x_{6,3}]$. Obviously, $|\zeta(x)-\overline{\omega}|$ increases on $[x_{6,0},\re\omega]$ and decreases on $[\re\omega,x_{6,3}]$.

Define $x_{6,2,1}=\re\omega$, $x_{6,2,2}=0.99$ and $x_{6,2,3}=1.07$. If $x\in[x_{6,0},x_{6,2,1}]$, we have
\renewcommand\theequation{\thesection.\arabic{equation}*}
\begin{align}
|Q(\zeta)|\geq\frac{\xi_{6,1}^+(x_{6,0})\cdot \xi_{6,2}^+(x_{6,2,1})}{\xi_{6,3}^+(x_{6,2,1})\cdot\xi_{6,4}^+\cdot\xi_{6,5}^+(x_{6,2,1})}\Aeq{209.6}\gy 125. \retainlabel{270.52-Q-zeta}
\end{align}
If $x\in[x_{6,2,1}, x_{6,2,2}]$, we have
\renewcommand\theequation{\thesection.\arabic{equation}*}
\begin{align}
|Q(\zeta)|\geq\frac{\xi_{6,1}^+(x_{6,2,1})\cdot \xi_{6,2}^+(x_{6,2,2})}{(1+\varepsilon_6)\cdot\xi_{6,4}^+\cdot\xi_{6,5}^+(x_{6,2,1})}\Aeq{130.0}\gy 125. \retainlabel{158.40-Q-zeta}
\end{align}
If $x\in[x_{6,2,2}, x_{6,2,3}]$, we have
\renewcommand\theequation{\thesection.\arabic{equation}*}
\begin{align}
|Q(\zeta)|\geq&\,\min\left\{\frac{\xi_{6,1}^+(x_{6,2,2})\cdot \xi_{6,2}^+(x_{6,2,3})}{\xi_{6,3}^+(x_{6,2,2})\cdot\xi_{6,4}^+\cdot\xi_{6,5}^+(x_{6,2,2})},~\Xi_6^+(x_{6,2,2},x_{6,2,3})\right\} \\
(\doteqdot&\,\min\,\{130.8\ldots, 129.1\ldots\})\gy 125. \retainlabel{194.00-Q-zeta}
\end{align}
$x\in[x_{6,2,3},x_{6,3}]$, we have
\renewcommand\theequation{\thesection.\arabic{equation}*}
\begin{align}
|Q(\zeta)|\geq \Xi_6^+(x_{6,2,3},x_{6,3})\Aeq{146.2}\gy 125. \retainlabel{152.39-Q-zeta}
\end{align}
Hence we have $|Q(\zeta)|>R=125$ for all $\zeta\in\ell_{6,2}^+$.

\medskip
(\textbf{Case a.4}) Let $\zeta(x)=x+y_6^-(x)\ii\in\ell_{6,2}^-$ with $x\in [x_{6,2}^-, x_{6,3}]$. Then $\xi_{6,4}^-=\xi_{6,4}^-(x)=\varepsilon_6^4$ for all $x\in [x_{6,2}^-, x_{6,3}]$.
From (Case a.3) we know that $|\zeta(x)+1|$, $|\zeta(x)-1|$, $|\zeta(x)|$ and $|\zeta(x)-\overline{\omega}|$ increase on $[x_{6,2}^-, x_{6,3}]$. Hence we have $|Q(\zeta)|\geq \Xi_6^-(x',x)$ for any interval $[x,x']\subset[x_{6,2}^-, x_{6,3}]$. Define $x_{6,3,1}=1.069$ and $x_{6,3,2}=1.072$. Then
\renewcommand\theequation{\thesection.\arabic{equation}*}
\begin{align}
\Xi_6^-(x_{6,3,1},x_{6,2}^-)\Aeq{125.3} & \gy 125, & \Xi_6^-(x_{6,3,2},x_{6,3,1})\Aeq{126.5}&\gy 125, \quad\qquad  \retainlabel{157.60-Xi6}\\
\Xi_6^-(x_{6,3},x_{6,3,2})\Aeq{133.0} & \gy 125.  \retainlabel{157.79-Xi6}
\end{align}
Hence we have $|Q(\zeta)|>R=125$ for all $\zeta\in\ell_{6,2}^-$. The proof of Part (a) is finished.

\medskip
(b) Consider the following $3$ arcs $\ell_{7,0}$, $\ell_{7,1}$ and $\ell_{7,2}$:
\begin{align}
\ell_{7,0} &:=y_7^+(x) \text{ with } x_{7,0}\leq x\leq x_{7,1},
& \ell_{7,1} &:=y_4^+(x) \text{ with } x_{7,2}\leq x\leq x_{7,1}, \\
\ell_{7,2}&:=y_4^-(x) \text{ with } x_{7,2}\leq x\leq x_{7,3},
\end{align}
where $x_{7,0}=-1-\varepsilon_7=-1.82$, $x_{7,1}=-1.095$, $x_{7,2}=-0.22-\varepsilon_4\Aeq{-1.1057}$ and $x_{7,3}=-0.77$.
Note that the boundary of $\D(-1,\varepsilon_7)\setminus\big(\D(a_4,\varepsilon_4)\cup \D(a_4,\varepsilon_4)\big)$ consists of three arcs $\widetilde{\ell}_{7,0}\subset\partial\D(-1,\varepsilon_7)$, $\widetilde{\ell}_{7,1}\subset\partial\D(a_4,\varepsilon_4)$ and $\widetilde{\ell}_{7,2}\subset\partial\D(\overline{a}_4,\varepsilon_4)$.
We have $\widetilde{\ell}_{7,0}\subset \ell_{7,0}\cup\overline{\ell}_{7,0}$, $\widetilde{\ell}_{7,1}\subset \ell_{7,1}\cup\ell_{7,2}$ and $\widetilde{\ell}_{7,2}\subset \overline{\ell}_{7,1}\cup\overline{\ell}_{7,2}$, where $\overline{\ell}_{7,i}=\{\overline{\zeta}: \zeta\in \ell_{7,i}\}$ and $i=0,1,2$, from the following estimates:
\renewcommand\theequation{\thesection.\arabic{equation}*}
\begin{align}
(x_{7,1}-\re a_4)^2+(y_7^+(x_{7,1})-\im a_4)^2-\varepsilon_4^2\Aeq{-0.0033} &\ly 0, \retainlabel{0.7845-x71}\\
(x_{7,1}+1)^2+(y_4^+(x_{7,1}))^2-\varepsilon_7^2\Aeq{0.0212} &\gy 0, \retainlabel{0.6724-x71}\\
y_4^-(x_{7,3}) \Aeq{-0.0042}&\ly 0. \retainlabel{0.0042-y4}
\end{align}
In the following we prove that $|Q(\zeta)|<\rho=0.03$ on $\ell_{7,0}\cup\ell_{7,1}\cup\ell_{7,2}$. Then Part (b) follows immediately.

\medskip
(\textbf{Case b.1})  Let $\zeta(x)=x+y_7^+(x)\ii\in\ell_{7,0}$ with $x\in [x_{7,0}, x_{7,1}]$. Then $\xi_{7,1}^+=\xi_{7,1}^+(x)=\varepsilon_7^6$ for all $x\in [x_{7,0}, x_{7,1}]$.
We denote the line passing through $\overline{\omega}$ and $-1$ by $\iota_{7,1}$ with slope $s_{7,1}=-\frac{\im \omega}{\re\omega+1}$. Let $\zeta(\widetilde{x}_{7,1})$ be one of the points in $\iota_{7,1}\cap\partial\D(-1,\varepsilon_7)$ with larger imaginary part. Then $\widetilde{x}_{7,1}=-1-\varepsilon_7/\sqrt{1+s_{7,1}^2}\Aeq{-1.7479}$.
It implies that $|\zeta(x)-\overline{\omega}|$ increases on $[x_{7,0},\widetilde{x}_{7,1}]$ and decreases on $[\widetilde{x}_{7,1},x_{7,1}]$. Moreover, it is easy to see that $|\zeta(x)-1|$, $|\zeta(x)|$ and $|\zeta(x)-\omega|$ decrease on $[x_{7,0}, x_{7,1}]$.

Define $x_{7,0,1}=-1.3$.
If $x\in [x_{7,0}, x_{7,0,1}]$, then
\renewcommand\theequation{\thesection.\arabic{equation}*}
\begin{align}
|Q(\zeta)|\leq&\,\max\left\{\frac{\xi_{7,1}^+\cdot \xi_{7,2}^+(x_{7,0})}{\xi_{7,3}^+(x_{7,0,1})\cdot\xi_{7,4}^+(x_{7,0,1})\cdot\xi_{7,5}^+(x_{7,0})},~\Xi_7^+(x_{7,0,1},x_{7,0})\right\} \\
(\doteqdot&\,\max\,\{0.0189\ldots, 0.0227\ldots\})\ly 0.03. \retainlabel{0.0244-Q-zeta}
\end{align}
If $x\in [x_{7,0,1}, x_{7,1}]$. We have
\renewcommand\theequation{\thesection.\arabic{equation}*}
\begin{align}
|Q(\zeta)|\leq \Xi_7^+(x_{7,1},x_{7,0,1})\Aeq{0.0261}\ly 0.03. \retainlabel{0.0261-Q-zeta}
\end{align}
Hence we have $|Q(\zeta)|<\rho=0.03$ for all $\zeta\in\ell_{7,0}$.

\medskip
(\textbf{Case b.2}) Let $\zeta(x)=x+y_4^+(x)\ii\in\ell_{7,1}$ with $x\in [x_{7,2}, x_{7,1}]$.
We denote the line passing through $1$ and $a_4$ by $\iota_{7,2}$ with slope $s_{7,2}=\frac{\im a_4}{\re a_4-1}<0$. Let $\zeta(\widetilde{x}_{7,2})$ be one of the points in $\iota_{7,2}\cap\partial\D(a_4,\varepsilon_4)$ with larger imaginary part. Then $\widetilde{x}_{7,2}=\re a_4-\varepsilon_4/\sqrt{1+s_{7,2}^2}\Aeq{-0.9909}$.
It implies that $|\zeta(x)-1|$ increases on $[x_{7,2}, x_{7,1}]$. Moreover, it is easy to see that $|\zeta(x)+1|$, $|\zeta(x)|$ and $|\zeta(x)-\overline{\omega}|$ increase on $[x_{7,2}, x_{7,1}]$ while $|\zeta(x)-\omega|$ decrease on $[x_{7,2}, x_{7,1}]$.
Therefore,
\renewcommand\theequation{\thesection.\arabic{equation}*}
\begin{align}
|Q(\zeta)|\leq\frac{\xi_{4,1}^+(x_{7,1})\cdot \xi_{4,2}^+(x_{7,1})}
{\xi_{4,3}^+(x_{7,2})\cdot\xi_{4,4}^+(x_{7,1})\cdot\xi_{4,5}^+(x_{7,2})}\Aeq{0.0253}\ly 0.03. \retainlabel{0.0253-Q-zeta}
\end{align}
Hence we have $|Q(\zeta)|<\rho=0.03$ for all $\zeta\in\ell_{7,1}$.

\medskip
(\textbf{Case b.3}) Let $\zeta(x)=x+y_4^-(x)\ii\in\ell_{7,2}$ with $x\in [x_{7,2}, x_{7,3}]$.  We denote the line passing through $-1$ and $a_4$ by $\iota_{7,3}$ with slope $s_{7,3}=\frac{\im a_4}{\re a_4+1}>0$. Let $\zeta(\widetilde{x}_{7,3})$ be one of the points in $\iota_{7,3}\cap\partial\D(a_4,\varepsilon_4)$ with smaller imaginary part. Then $\widetilde{x}_{7,3}=\re a_4-\varepsilon_4/\sqrt{1+s_{7,3}^2}\Aeq{-0.8834}$. This implies that $|\zeta(x)+1|$ decreases on $[x_{7,2}, \widetilde{x}_{7,3}]$ and increases on $[\widetilde{x}_{7,3},x_{7,3}]$.
Let $\iota_{7,4}$ be the line passing through $\omega$ and $a_4$ with slope $s_{7,4}=\frac{\im (\omega-a_4)}{\re (\omega-a_4)}>0$. Let $\zeta(\widetilde{x}_{7,4})$ be one of the points in $\iota_{7,4}\cap\partial\D(a_4,\varepsilon_4)$ with smaller imaginary part. Then $\widetilde{x}_{7,4}=\re a_4-\varepsilon_4/\sqrt{1+s_{7,4}^2}\Aeq{-1.1038}$.
This implies that $|\zeta(x)-\omega|$ increases on $[x_{7,2}, \widetilde{x}_{7,4}]$ and decreases on $[\widetilde{x}_{7,4},x_{7,3}]$.
It is easy to see that $|\zeta(x)-1|$, $|\zeta(x)|$ and $|\zeta(x)-\overline{\omega}|$ decrease on $[x_{7,2}, x_{7,3}]$.

Define $x_{7,2,1}=-1$. If $x\in [x_{7,2}, x_{7,2,1}]$, we have
\renewcommand\theequation{\thesection.\arabic{equation}*}
\begin{align}
|Q(\zeta)|\leq&\,\max\left\{\frac{\xi_{4,1}^-(x_{7,2})\cdot \xi_{4,2}^-(x_{7,2})}{\xi_{4,3}^-(x_{7,2,1})\cdot\xi_{4,4}^-(x_{7,2})\cdot\xi_{4,5}^-(x_{7,2,1})},~\Xi_4^-(x_{7,2,1},x_{7,2})\right\} \\
(\doteqdot&\,\max\,\{ 0.0189\ldots,0.0207\ldots\})\ly 0.03. \retainlabel{0.0207-Q-zeta}
\end{align}
If $x\in [x_{7,2,1},x_{7,3}]$, we have
\renewcommand\theequation{\thesection.\arabic{equation}*}
\begin{align}
|Q(\zeta)|\leq&\,\max\left\{\Xi_4^-(x_{7,3},x_{7,2,1}),~\frac{\xi_{4,1}^-(x_{7,3})\cdot \xi_{4,2}^-(x_{7,2,1})}{\xi_{4,3}^-(x_{7,3})\cdot\xi_{4,4}^-(x_{7,3})\cdot\xi_{4,5}^-(x_{7,3})}\right\} \\
(\doteqdot&\,\max\,\{1.7\ldots\times 10^{-4}, 6.8\ldots\times 10^{-5}\})\ly 0.03. \retainlabel{1.7-6.8-Q-zeta}
\end{align}
Hence we have $|Q(\zeta)|<\rho=0.03$ for all $\zeta\in\ell_{7,2}$.  The proof of Part (b) is finished.

\medskip

(c) We use a such criterion: If two points $\zeta_1,\zeta_2\in\partial\mathscr{D}_{r_1}\cap\{\zeta\in\C:\im\zeta\geq 0\}$ are contained in a disk $\D(\zeta_0,r)$ with $\im \zeta_0\geq 0$, then so is the subarc of $\partial\mathscr{D}_{r_1}$ between $\zeta_1$ and $\zeta_2$ in $\{\zeta\in\C:\im\zeta\geq 0\}$. The upper half of $\partial\mathscr{D}_{r_1}$ can be parametrized by
\begin{equation}
\Gamma:y(x)=\sqrt{(r_0r_1)^2-(x-a_0)^2}, \quad x_0\leq x\leq x_4,
\end{equation}
where $a_0=-0.06$, $r_0=1.07$, $r_1=1.2$, $x_0=a_0-r_0r_1=-1.344$ and $x_4=a_0+r_0r_1=1.224$.

Let $x_1=-1.1$, $x_2=0.54$, $x_3=1.03$ and denote $\zeta_j=x_j+y(x_j)\ii$ for $j=1,2,3$. Then $\zeta_1$, $\zeta_2$ and $\zeta_3$ divide the curve $\Gamma$ into $4$ subarcs $\Gamma_1$, $\Gamma_2$, $\Gamma_3$ and $\Gamma_4$ from left to right. The end points of $\Gamma_1$: $x_0$ and $\zeta_1$, are contained in $\D(-1,\varepsilon_7)$ since $|x_0+1|=0.344<\varepsilon_7=0.82$ and
\renewcommand\theequation{\thesection.\arabic{equation}*}
\begin{align}
(x_1+1)^2+y(x_1)^2-\varepsilon_7^2&\Aeq{-0.0953}\ly 0. \retainlabel{equ:ell-1}
\end{align}
The end points of $\Gamma_2$: $\zeta_1$ and $\zeta_2$, are contained in $\D(a_4,\varepsilon_4)$, since
\renewcommand\theequation{\thesection.\arabic{equation}*}
\begin{align}
(x_1-\re a_4)^2+(y(x_1)-\im a_4)^2-\varepsilon_4^2 & \Aeq{-0.0061}\ly 0 \text{~~and}  \retainlabel{0.0061-x1}  \\
(x_2-\re a_4)^2+(y(x_2)-\im a_4)^2-\varepsilon_4^2 & \Aeq{-0.0087}\ly 0.  \retainlabel{0.0087-x2}
\end{align}
The end points of $\Gamma_3$: $\zeta_2$ and $\zeta_3$, are contained in $\D(\omega,\varepsilon_6)$, since
\renewcommand\theequation{\thesection.\arabic{equation}*}
\begin{align}
(x_2-\re\omega)^2+(y(x_2)-\im\omega)^2-\varepsilon_6^2 & \Aeq{-0.0027}\ly 0 \text{~~and}  \retainlabel{0.0027-x2}  \\
(x_3-\re\omega)^2+(y(x_3)-\im\omega)^2-\varepsilon_6^2 & \Aeq{-0.0292}\ly 0.  \retainlabel{0.0292-x3}
\end{align}
The end points of $\Gamma_4$: $\zeta_3$ and $x_4$, are contained in $\D(a_5,\varepsilon_5)$, since
\renewcommand\theequation{\thesection.\arabic{equation}*}
\begin{align}
(x_3-\re a_5)^2+(y(x_3)-\im a_5)^2-\varepsilon_5^2&\Aeq{-0.0206}\ly 0 \text{~and~} \retainlabel{0.0206-x3} \\
(x_4-\re a_5)^2+(\im a_5)^2-\varepsilon_5^2&\Aeq{-0.0615}\ly 0. \label{0.0615-x4a5}
\end{align}
Therefore the upper half of $\partial \mathscr{D}_{r_1}$ is contained in the union of $\D$, $\D(a_4,\varepsilon_4)$, $\D(a_5,\varepsilon_5)$, $\D(\omega,\varepsilon_6)$ and $\D(-1,\varepsilon_7)$.

Moreover, we need to verify that the interval $[x_0, x_4]$ is also covered by the union of these disks. Define $x_1'=-0.5$ and $x_2'=0.3$. Then
$[x_0,x_1']$ is contained in $\D(-1,\varepsilon_7)$ since $x_0\in\D(-1,\varepsilon_7)$ and $|x_1'+1|=0.5<\varepsilon_7=0.82$.
The interval $[x_1', x_2']$ is contained in $\D(a_4,\varepsilon_4)$, since
\renewcommand\theequation{\thesection.\arabic{equation}*}
\begin{align}
(x_1'-\re a_4)^2+(\im a_4)^2-\varepsilon_4^2 & \Aeq{-0.2300}\ly 0 \text{~~and}  \retainlabel{0.2300-x1p}  \\
(x_2'-\re a_4)^2+(\im a_4)^2-\varepsilon_4^2 & \Aeq{-0.0380}\ly 0.  \retainlabel{0.0380-x2p}
\end{align}
The interval $[x_2', x_4]$ is contained in $\D(a_5,\varepsilon_5)$, since \eqref{0.0615-x4a5} and
\renewcommand\theequation{\thesection.\arabic{equation}*}
\begin{align}
(x_2'-\re a_5)^2+(\im a_5)^2-\varepsilon_5^2&\Aeq{-0.0283}\ly 0. \retainlabel{equ:ell-4}
\end{align}
This implies that the closure of $\mathscr{D}_{r_1}\cap\BH_+$ is covered by the union of $\D(a_4,\varepsilon_4)$, $\D(a_5,\varepsilon_5)$, $\D(\omega,\varepsilon_6)$ and $\D(-1,\varepsilon_7)$.
By the symmetry we conclude that $\overline{\mathscr{D}}_{r_1}$ is contained in the union of the $7$ disks stated in the lemma.

\medskip
(d) Assume that $\zeta\in\overline{\MU}_1$. By Lemma \ref{lema-a4}, we have
$\zeta\not\in\OD(a_4,\varepsilon_4)\cup\OD(\overline{a}_4,\varepsilon_4)\cup\OD(a_5,\varepsilon_5)\cup\OD(\overline{a}_5,\varepsilon_5)$. If $|Q(\zeta)|\geq \rho$, we have $\zeta\not\in\OD(-1,\varepsilon_7)$ by Part (b). Note that $Q(\gamma_{b1})=Q(\gamma_{b3})=\Gamma_b^Q=(0,cv]$ and $|cv|<17<R=125$. By Part (a), this implies that $\zeta\not\in\OD(\omega,\varepsilon_6)\cup\OD(\overline{\omega},\varepsilon_6)$. By Part (c), we have $\zeta\in\EC\setminus\overline{\mathscr{D}}_{r_1}$.

\medskip
(e) Assume that $\zeta\in\overline{\MU}_{12}'$. By Lemma \ref{lema-a4}, we have
$\zeta\not\in\OD(a_4,\varepsilon_4)\cup\OD(\overline{a}_4,\varepsilon_4)\cup\OD(a_5,\varepsilon_5)\cup\OD(\overline{a}_5,\varepsilon_5)$.
If $\rho\leq |Q(\zeta)|\leq R$. By Parts (a) and (b), we have $\zeta\not\in\OD(\omega,\varepsilon_6)\cup\OD(\overline{\omega},\varepsilon_6)\cup\OD(-1,\varepsilon_7)$.  By Part (c), we have $\zeta\in\C\setminus\overline{\mathscr{D}}_{r_1}$.

\medskip
(f) The proof is completely similar to Part (e). We omit the details.
\end{proof}

\section{Estimates on $Q$: Part II}

\begin{lem}\label{lema-Q}
One can write
\begin{equation}
Q(\zeta)  =\zeta+b_0+\frac{b_1}{\zeta}+Q_2(\zeta),
\end{equation}
with
\begin{equation}
\begin{split}
Q_2(\zeta) =
&~ \frac{2^4}{5^5}\cdot\frac{a_{1,1}\zeta-a_{0,1}}{\zeta(\zeta-\omega)(\zeta-\overline{\omega})}
 -\frac{2^6}{5^{10}}\cdot\frac{a_{1,2}\zeta+a_{0,2}}{(\zeta-\omega)^2(\zeta-\overline{\omega})^2}\\
&~ +\frac{2^{11}}{5^{14}}\cdot\frac{a_{1,3}\zeta+a_{0,3}}{(\zeta-\omega)^3(\zeta-\overline{\omega})^3}
-\frac{2^{12}}{5^{16}}\cdot\frac{a_{1,4}\zeta+a_{0,4}}{(\zeta-\omega)^4(\zeta-\overline{\omega})^4},
\end{split}
\end{equation}
where
\begin{equation}
b_0=\frac{2(13+32\sqrt{6})}{25}\Aeq{7.31}, \quad b_1=\frac{2029+256\sqrt{6}}{125}\Aeq{21.24},
\end{equation}
and
\begin{align}
a_{1,1}&=2(617+688\sqrt{6}), & a_{0,1}&=25(119+16\sqrt{6}), \\
a_{1,2}&=3889250 + 837000\sqrt{6}, & a_{0,2}&=2755539+487396\sqrt{6}, \\
a_{1,3}&=31356325 + 8965425 \sqrt{6}, & a_{0,3}&=66811702 + 23697378\sqrt{6}, \\
a_{1,4}&=102142212 + 38104768\sqrt{6}, & a_{0,4}&=240990025 + 94826600\sqrt{6}.
\end{align}
For $|\zeta|\geq r>1$, we have
\begin{equation}
\begin{split}
|Q_2(\zeta)|\leq Q_{2,max}(r) :=
&~ \frac{2^4}{5^5}\cdot\frac{a_{1,1}r+a_{0,1}}{r(r-1)^2}
+\frac{2^6}{5^{10}}\cdot\frac{a_{1,2}r+a_{0,2}}{(r-1)^4}\\
&~ +\frac{2^{11}}{5^{14}}\cdot\frac{a_{1,3}r+a_{0,3}}{(r-1)^6}
+\frac{2^{12}}{5^{16}}\cdot\frac{a_{1,4}r+a_{0,4}}{(r-1)^8}.
\end{split}
\end{equation}
\end{lem}

\begin{proof}
The formula of $Q_2$ can be obtained by a calculation from \eqref{equ:Q-another-form}. For the estimate of $|Q_2(\zeta)|$, it is sufficient to notice that $|(\zeta-\omega)(\zeta-\overline{\omega})|\geq (|\zeta|-1)^2$ for $|\zeta|\geq r>1$ and that $r\mapsto r/(r-1)^n$ is decreasing in $(1,+\infty)$ for any $n\geq 2$.
\end{proof}

For $z_0\in\C$ and $\theta>0$, let $\V(z_0,\theta)$ be defined in \eqref{equ:V-z0-theta}.

\begin{lem}\label{lema-Q-subset-cv}
$Q\big(\OV(11,\frac{\pi}{6})\big)\subset \V (17,\frac{\pi}{6}) \subset \V (cv,\frac{\pi}{6})$.
\end{lem}

\begin{proof}
Suppose $\zeta\in\OV(11,\frac{\pi}{6})$ and let $\zeta'=\zeta+6$. Since $\zeta'\in\OV(17,\frac{\pi}{6})$, it is sufficient to show that $|\arg(Q(\zeta)-\zeta')|=\big|\arg\big(\frac{b_1}{\zeta}+(b_0-6+Q_2(\zeta))\big)\big|<\frac{\pi}{6}$. Since $\zeta\in\OV(11,\frac{\pi}{6})$, we have $|\arg\zeta|<\frac{\pi}{6}$ and $|\arg\frac{b_1}{\zeta}|<\frac{\pi}{6}$.
On the other hand, by Lemma \ref{lema-Q}, we have
\renewcommand\theequation{\thesection.\arabic{equation}*}
\begin{align}
 Q_{2,max}(11)\Aeq{0.2998}\ly 0.4. \label{0.3-Q2max}
\end{align}
Note that $b_0-6>1$. By Lemma \ref{lema-basic-esti}, we have
\begin{equation}
|\arg(b_0-6+Q_2(\zeta))|=\Big|\arg\Big(1+\tfrac{Q_2(\zeta)}{b_0-6}\Big)\Big|\leq \arcsin Q_{2,max}(11)<\tfrac{\pi}{3}\cdot 0.4<\tfrac{\pi}{6}.
\end{equation}
Since both $\frac{b_1}{\zeta}$ and $b_0-6+Q_2(\zeta)$ are contained in $\V(0,\frac{\pi}{6})$, we obtain $|\arg(Q(\zeta)-\zeta')|<\tfrac{\pi}{6}$. Therefore, $Q(\OV(11,\frac{\pi}{6}))\subset \V (17,\frac{\pi}{6}) \subset \V (cv,\frac{\pi}{6})$.
\end{proof}


\begin{lem}\label{lema-injective}
\begin{enumerate}
\item We have
\begin{equation}
Q'(\zeta)=\Big(1-\tfrac{cp}{\zeta}\Big)^2\Big(1-\tfrac{cp'}{\zeta}\Big)^2\,
\frac{(1-\frac{1}{\zeta})^3(1+\frac{1}{\zeta})^5}{(1-\frac{\omega}{\zeta})^5(1-\frac{\overline{\omega}}{\zeta})^5};
\end{equation}

\item If $|\zeta|\geq r>cp\Aeq{4.073}$, then
\begin{equation}
|\log Q'(\zeta)|\leq LogDQ_{max}(r):=\frac{b_1}{r^2}+\frac{50}{r^3}+\frac{cp^4}{2r^3\big(r-cp\big)};
\end{equation}

\item If $|\zeta|>5.6$, then $\re Q'(\zeta)>0$. In particular, $Q$ is injective in the half plane $\{\zeta\in\C: \re(\zeta e^{-\ii\theta})>5.6\}$ for any $\theta\in\R$.
\end{enumerate}
\end{lem}

\begin{proof}
(a) This can be obtained by a direct calculation from \eqref{equ:Q-another-form}.

\medskip
(b) Using $-\log(1-x)=x+\frac{x^2}{2}+\frac{x^3}{3}+\sum_{n=4}^\infty \frac{x^n}{n}$, we have
\begin{equation}
\begin{split}
&~\log Q'(\zeta)
=2\log\Big(1-\tfrac{cp}{\zeta}\Big)+2\log\Big(1-\tfrac{cp'}{\zeta}\Big)+3\log(1-\tfrac{1}{\zeta}) \\
 &~\qquad\qquad\quad +5\log\Big(1+\tfrac{1}{\zeta}\Big)- 5\log\Big(1-\tfrac{\omega}{\zeta}\Big)- 5\log\Big(1-\tfrac{\overline{\omega}}{\zeta}\Big)\\
=&~-\tfrac{b_1}{\zeta^2}-\tfrac{\widetilde{b}_2}{\zeta^3}-2\sum_{n=4}^\infty \tfrac{(cp)^n+(cp')^n}{n\zeta^n}
-\sum_{n=4}^\infty\tfrac{3+5(-1)^n}{n\zeta^n}+5\sum_{n=4}^\infty\tfrac{\omega^n+\overline{\omega}^n}{n\zeta^n},
\end{split}
\end{equation}
where
\begin{equation}
\widetilde{b}_2=\tfrac{2}{3}(cp^3+(cp')^3-1)-\tfrac{5}{3}(\omega^3+\overline{\omega}^3)=\tfrac{64(617+688\sqrt{6})}{3125}\Aeq{47.15}.
\end{equation}
Note that $|3+5(-1)^n|\leq 8$ and $|\omega^n+\overline{\omega}^n|\leq 2$ for all $n\in\N$. If $|\zeta|\geq r>cp$, then
\begin{equation}
|\log Q'(\zeta)|
\leq \frac{b_1}{r^2}+\frac{\widetilde{b}_2}{r^3}+\frac{1}{2}\left(\frac{\Big(\frac{cp}{r}\Big)^4}{1-\frac{cp}{r}}
+\frac{\Big(\frac{cp'}{r}\Big)^4}{1-\frac{cp'}{r}}\right)+\Big(2+\frac{5}{2}\Big)\frac{\frac{1}{r^4}}{1-\frac{1}{r}}.
\end{equation}
The estimate then follows since $r>4$.

\medskip
(c) Note that $\arg Q'(\zeta)=\im\big(\log Q'(\zeta)\big)$. If $|\zeta|>5.6$, by Part (b) we have
\renewcommand\theequation{\thesection.\arabic{equation}*}
\begin{align}
|\arg Q'(\zeta)|\leq |\log Q'(\zeta)|<LogDQ_{max}(5.6)\Aeq{1.47}\ly \tfrac{\pi}{2}. \retainlabel{equ:arg-Q-deri-1}
\end{align}
Thus we have $\re Q'(\zeta)>0$. If two distinct points $\zeta_0$ and $\zeta_1$ can be connected by a segment in $\{\zeta\in\C:|\zeta|>5.6\}$, then by $\tfrac{Q(\zeta_1)-Q(\zeta_0)}{\zeta_1-\zeta_0}=\int_0^1 Q(\zeta_0+(\zeta_1-\zeta_0)t)dt$ we have $\re \tfrac{Q(\zeta_1)-Q(\zeta_0)}{\zeta_1-\zeta_0}>0$. Hence $Q(\zeta_0)\neq Q(\zeta_1)$. This implies that $Q$ is injective in the half plane $\{\zeta\in\C: \re(\zeta e^{-\ii\theta})>5.6\}$.
\end{proof}

\section{Estimates on $\varphi$}

In the following, we assume that $F=Q\circ\varphi^{-1}\in\MF_1^Q$ unless otherwise specified. Then $\varphi:\EC\setminus\overline{\mathscr{D}}\to\EC\setminus\{0\}$ is a normalized univalent map, where $\mathscr{D}=\D(a_0,r_0)$. Let
\begin{equation}\label{equ:zeta}
\zeta(w)=r_0 w+a_0: \EC\setminus\OD\to\EC\setminus\overline{\mathscr{D}}.
\end{equation}
The following lemma gives various estimates of $\varphi$. The proof is based on applying distortion theorems of univalent maps. We refer to \cite[\S5.G]{IS08}.

\begin{lem}\label{lema-esti-varphi}
Suppose $\varphi:\EC\setminus\overline{\mathscr{D}}\to\EC\setminus\{0\}$ is a normalized univalent map, i.e., $\varphi(\infty)=\infty$ and $\lim_{\zeta\to\infty}\frac{\varphi(\zeta)}{\zeta}=1$. Hence $\varphi$ can be written as
\begin{equation}
\varphi(\zeta)=\zeta+c_0+\varphi_1(\zeta)
\end{equation}
with $c_0\in\C$ and $\lim_{\zeta\to\infty}\varphi_1(\zeta)=0$. Then the following estimates hold:
\begin{enumerate}
\item $|c_0-c_{00}|\leq c_{01,max}$, where
\begin{equation}
c_{00}:=-a_0=0.06 \text{\quad and\quad} c_{01,max}:=2r_0=2.14;
\end{equation}
\item $Image(\varphi)\supset\{z\in\C:\,|z-(c_0+a_0)|>2 r_0\}\supset\{z\in\C:\,|z|>4r_0=4.28\}$;
\item $r_0|w|\left(1-\frac{1}{|w|}\right)^2\leq |\varphi(\zeta(w))|\leq r_0|w|\left(1+\frac{1}{|w|}\right)^2$  for $|w|>1$;
\item $\left|\arg \frac{\varphi(\zeta(w))}{w}\right|\leq\log\frac{|w|+1}{|w|-1}$ for $|w|>1$;
\item $|\varphi_1(\zeta)|\leq\varphi_{1,max}(r):=r_0\sqrt{-\log\big(1-(\frac{r_0}{r-|a_0|})^2\big)}$ for $|\zeta|\geq r>r_0+|a_0|=1.13$;
\item $|\log\varphi'(\zeta)|\leq LogD\varphi_{max}(r):=-\log\big(1-(\frac{r_0}{r-|a_0|})^2\big)$ for $|\zeta|\geq r>r_0+|a_0|=1.13$.
\end{enumerate}
\end{lem}

\begin{lem}\label{lema-phi-rho}
If $\zeta\in\C\setminus \mathscr{D}_{r_1}$ with $r_1=1.2$, then $|\varphi(\zeta)|>\rho$ and $\left|\arg\frac{\varphi(\zeta)}{\zeta}\right|<\pi$.
\end{lem}

\begin{proof}
Note that $\zeta(w)=r_0 w+a_0$ is a conformal map sending $\{w\in\C:|w|=r\}$ onto $\partial \mathscr{D}_r$, where $\mathscr{D}_r=\D(a_0,r_0 r)$ and $r>1$. Let $\zeta\in\C\setminus \mathscr{D}_{r_1}$.
Then one can write $\zeta=\zeta(w)$ with $|w|\geq r_1$. By Lemma \ref{lema-esti-varphi}(c), we have
\begin{equation}
|\varphi(\zeta)|
\geq r_0|w|\Big(1-\tfrac{1}{|w|}\Big)^2
\geq  1.07\times 1.2\,\Big(1-\tfrac{1}{1.2}\Big)^2=\tfrac{1.07}{30}>0.03=\rho.
\end{equation}
By Lemma \ref{lema-esti-varphi}(d), we have
$
\big|\arg\frac{\varphi(\zeta(w))}{w}\big|\leq\log\frac{1.2+1}{1.2-1}=\log 11.
$
By Lemma \ref{lema-basic-esti}(a), we have
$\big|\arg\tfrac{\zeta(w)}{w}\big|=\big|\arg\big(1+\tfrac{a_0}{r_0 w}\big)\big|
<\arcsin(|a_0|)=\arcsin(0.06).
$
Therefore,
\renewcommand\theequation{\thesection.\arabic{equation}*}
\begin{align}
\left|\arg\tfrac{\varphi(\zeta)}{\zeta}\right|
\leq  \left|\arg\tfrac{\varphi(\zeta(w))}{w}\right|+\left|\arg\tfrac{\zeta(w)}{w}\right|
< \log 11+\tfrac{0.06\pi}{3}\Aeq{2.46}\ly \pi. \retainlabel{246-pi}
\end{align}
The proof is finished.
\end{proof}

Let $\D(a_5,\varepsilon_5)$ and $\D(\omega,\varepsilon_6)$ be the disks defined in Lemmas \ref{lema-a4} and \ref{lema-E-r1} respectively. The following result will be used in \S\ref{sec-bounding-dom}.

\begin{lem}[see Figure \ref{Fig-Omega0}]\label{lema-arg-arg}
Let $h_0=\re\omega+\im\omega+\varepsilon_6=\frac{14\sqrt{6}}{25}+0.45\Aeq{1.82}$ and denote
\begin{equation}
\begin{split}
\Omega_0:=&~\{\zeta\in\C:\re \zeta>\re\omega \text{~or~}|\im \zeta|> h_0+1 \text{~or~} |\im\zeta|>-\re\zeta+h_0\} \\
&~\setminus\big(\OD(a_5,\varepsilon_5)\cup\OD(\overline{a}_5,\varepsilon_5)\cup\OD(\omega,\varepsilon_6)\cup\OD(\overline{\omega},\varepsilon_6)\big).
\end{split}
\end{equation}
If $\zeta\in\overline{\Omega}_0$, then $\varphi(\zeta)\not\in\R_-$.
\end{lem}

\begin{figure}[!htpb]
  \setlength{\unitlength}{1mm}
  \centering
  \includegraphics[width=0.96\textwidth]{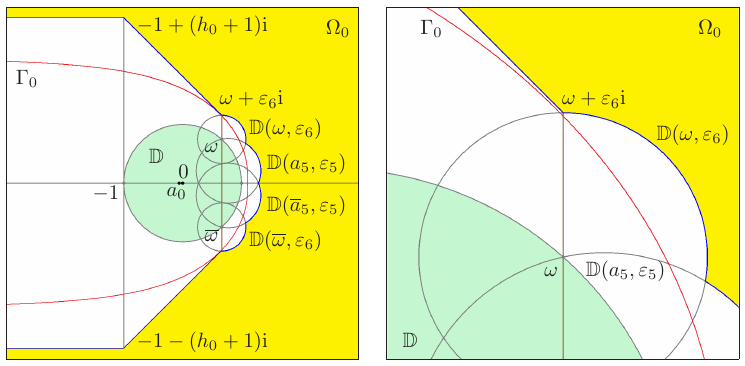}
  \caption{Left: The domain $\Omega_0$ (in yellow) and the curve $\Gamma_0$ (in red). To prove Lemma \ref{lema-arg-arg}, we show that it suffices to prove the property $\Gamma_0\cap\overline{\Omega}_0=\emptyset$. Right: A zoom of the left picture near $\omega+\varepsilon_6\ii$.}
  \label{Fig-Omega0}
\end{figure}

\begin{proof}
By Lemma \ref{lema-esti-varphi}(d), for $|w|>1$, then
$\left|\arg(\varphi(\zeta(w)))\right|\leq  |\arg w|+\left|\arg\tfrac{\varphi(\zeta(w))}{w}\right|\leq |\arg w|+\log\tfrac{|w|+1}{|w|-1}$.
For $\zeta\in\C\setminus\overline{\mathscr{D}}$, we write $\zeta=\zeta(w)=r_0 w+a_0$ with $|w|=r>1$ and $\theta=\arg w\in(-\pi,\pi]$.
We define
\begin{equation}
\Gamma_0:=\{\zeta(re^{\ii\theta})\in\C: \theta\in(-\pi,\pi) \text{ and } r>1 \text{ satisfy } |\theta|+\log\tfrac{r+1}{r-1}=\pi\}.
\end{equation}
It is easy to see that for the points in $\Gamma_0$, as $\theta$ moves from $0$ to $\pi$ monotonously, the modulus $r$ increases strictly from $\frac{e^\pi+1}{e^\pi-1}$ to $+\infty$. Indeed, the following map is strictly increasing on $[0,\pi)$:
\begin{equation}
r_2(\theta):=\tfrac{e^{\pi-\theta}+1}{e^{\pi-\theta}-1}.
\end{equation}

By the definition of $\Omega_0$, one can verify that the intersection of the line $\{\zeta(re^{\ii\theta})=a_0+r_0 re^{\ii\theta}:r>0\}$ with $\partial\Omega_0$ is a singleton
$a_0+r_0 r_3 (\theta) e^{\ii\theta}$ for all $\theta\in[0,\pi)$. The specific formula of $r_3(\theta)$ is a piecewise continuous function which can be seen in the following arguments. So in order to prove the lemma, it suffices to show that $r_2(\theta)<r_3(\theta)$ for all $\theta\in[0,\pi)$.

\medskip
Define $\alpha_1=\arctan\frac{\im a_5}{\re a_5-a_0}=\arctan\frac{1}{4}\Aeq{0.2449}$, $\alpha_2^-=0.54$, $\alpha_2^+=0.55$, $\alpha_3=\arctan\frac{\im\omega}{\re\omega-a_0}\Aeq{0.8017}$, $\alpha_4=\arctan\frac{\im\omega+\varepsilon_6}{\re\omega-a_0}\Aeq{1.0120}$, $\alpha_5=\pi-\arctan\frac{h_0+1}{a_0+1}\Aeq{1.8923}$. We divide the arguments into several cases.

\medskip
(\textbf{Case 1})  Let $\theta\in[0,\alpha_1]$. In order to find the formula of $r_3(\theta)$, it suffices to notice that $a_0+r_0 r_3 (\theta) e^{\ii\theta}=(a_0+r_0 r_3(\theta)\cos\theta, r_0 r_3(\theta)\sin\theta)$ lies on the circle $\partial\D(a_5,\varepsilon_5)$. A direct calculation shows that
\begin{equation}
r_3(\theta):=\widetilde{r}_5(\theta)=\tfrac{1}{r_0}\Big(B_5(\theta)+\sqrt{B_5(\theta)^2-C_5(\theta)}\Big),
\end{equation}
where $B_5(\theta)=\im a_5 \sin\theta+(\re a_5-a_0)\cos\theta$ and $C_5(\theta)=(\re a_5-a_0)^2+\im a_5^2-\varepsilon_5^2$.
By the definition of $\alpha_1$, it follows that $r_3(\theta)$ increases on $[0,\alpha_1]$.
Hence for any $\theta\in[0,\alpha_1]$, we have
\renewcommand\theequation{\thesection.\arabic{equation}*}
\begin{align}
r_3(\theta)-r_2(\theta)\geq \widetilde{r}_5(0)-r_2(\alpha_1)\Aeq{0.1435}\gy 0. \retainlabel{0.0069-Upsilon}
\end{align}

\medskip
(\textbf{Case 2})  Let $\theta\in[\alpha_1,\alpha_2^+]$. We denote $r_5^+(\theta):=\widetilde{r}_5(\theta)$.
Then $a_0+r_0 r_5^+(\alpha_2^+)e^{\ii\alpha_2^+}\in\D(\omega,\varepsilon_6)$ since
\renewcommand\theequation{\thesection.\arabic{equation}*}
\begin{align}
&~(a_0+r_0 r_5^+(\alpha_2^+)\cos\alpha_2^+ -\re\omega)^2 \\
+&~(r_0 r_5^+(\alpha_2^+)\sin\alpha_2^+ -\im\omega)^2-\varepsilon_6^2\Aeq{-0.0091}\ly 0. \label{0.0091-Upsilon}
\end{align}
Let $\zeta_{5,6}\in\partial\Omega_0\cap\partial\D(a_5,\varepsilon_5)\cap\partial\D(\omega,\varepsilon_6)$ and denote $\alpha_2:=\arctan\frac{\im\zeta_{5,6}}{\re\zeta_{5,6}-a_0}$. By \eqref{0.0091-Upsilon} we have $\alpha_2<\alpha_2^+$. So in order to prove $r_3(\theta)>r_2(\theta)$ for all $\theta\in[\alpha_1,\alpha_2]$, it suffices to show that $r_5^+(\theta)>r_2(\theta)$ for all $\theta\in[\alpha_1,\alpha_2^+]$.
Note that $r_5^+(\theta)$ decreases on $[\alpha_1,\alpha_2^+]$. For $\theta\in[\alpha_1,\alpha_2^+]$, we have
\renewcommand\theequation{\thesection.\arabic{equation}*}
\begin{align}
r_5^+(\theta)-r_2(\theta)\geq r_5^+(\alpha_2^+)-r_2(\alpha_2^+)\Aeq{0.0631}\gy 0. \retainlabel{0.0631-Upsilon}
\end{align}

\medskip
(\textbf{Case 3})  Let $\theta\in[\alpha_3,\alpha_4]$. Consider the condition that $a_0+r_0 r_3 (\theta) e^{\ii\theta}=(a_0+r_0 r_3(\theta)\cos\theta, r_0 r_3(\theta)\sin\theta)\in\partial\D(\omega,\varepsilon_6)$. A direct calculation shows that
\begin{equation}
r_3(\theta):=\widetilde{r}_6(\theta)=\tfrac{1}{r_0}\big(B_6(\theta)+\sqrt{B_6(\theta)^2-C_6(\theta)}\big),
\end{equation}
where $B_6(\theta)=\im \omega \sin\theta+(\re \omega-a_0)\cos\theta$ and $C_6(\theta)=(\re \omega-a_0)^2+\im \omega^2-\varepsilon_6^2$.
By the definition of $\alpha_3$, it follows that $r_3(\theta)$ decreases on $[\alpha_3,\alpha_4]$.
Hence for any $\theta\in[\alpha_3,\alpha_4]$, we have
\renewcommand\theequation{\thesection.\arabic{equation}*}
\begin{align}
r_3(\theta)-r_2(\theta)\geq \widetilde{r}_6(\alpha_4)-r_2(\alpha_4)\Aeq{0.0062}\gy 0. \retainlabel{0.0062-Upsilon}
\end{align}

\medskip
(\textbf{Case 4})  Let $\theta\in[\alpha_2^-,\alpha_3]$. We denote $r_6^-(\theta):=\widetilde{r}_6(\theta)$.
Then $a_0+r_0 r_6^-(\alpha_2^-)e^{\ii\alpha_2^-}\in\D(a_5,\varepsilon_5)$ since
\renewcommand\theequation{\thesection.\arabic{equation}*}
\begin{align}
&~(a_0+r_0 r_6^-(\alpha_2^-)\cos\alpha_2^- -\re a_5)^2 \\
+&~(r_0 r_6^-(\alpha_2^-)\sin\alpha_2^- -\im a_5)^2-\varepsilon_5^2\Aeq{-0.0031}\ly 0. \label{0.0091-Upsilon-1}
\end{align}
Let $\alpha_2$ be defined in (Case 2). By \eqref{0.0091-Upsilon-1} we have $\alpha_2^-<\alpha_2$. So in order to prove $r_3(\theta)>r_2(\theta)$ for all $\theta\in[\alpha_2,\alpha_3]$, it suffices to prove that $r_6^-(\theta)>r_2(\theta)$ for all $\theta\in[\alpha_2^-,\alpha_3]$.
Note that $r_6^-(\theta)$ increases on $[\alpha_2^-,\alpha_3]$. For $\theta\in[\alpha_2^-,\alpha_3]$, we have
\renewcommand\theequation{\thesection.\arabic{equation}*}
\begin{align}
r_6^-(\theta)-r_2(\theta)\geq r_6^-(\alpha_2^-)-r_2(\alpha_3)\Aeq{0.0152}\gy 0. \retainlabel{0.0152-Upsilon-1}
\end{align}

\medskip
(\textbf{Case 5})  Let $\theta\in[\alpha_4,\alpha_5]$. Consider the condition that $a_0+r_0 r_3 (\theta) e^{\ii\theta}=(a_0+r_0 r_3(\theta)\cos\theta, r_0 r_3(\theta)\sin\theta)$ lies on the line $\{\zeta\in\C:\re\zeta+\im\zeta=h_0\}$. Then
\begin{equation}
r_3(\theta)=\frac{h_0-a_0}{\sqrt{2}r_0\sin(\theta+\tfrac{\pi}{4})}.
\end{equation}
Note that $\tfrac{\pi}{4}<\alpha_4<\alpha_5<\tfrac{3\pi}{4}$. It follows that $r_3(\theta)$ increases on $[\alpha_4,\alpha_5]$.
Then for any $\theta\in[\theta',\theta'']\subset[\alpha_4,\alpha_5]$, we have $r_3(\theta)-r_2(\theta)\geq r_3(\theta')-r_2(\theta'')$. Define $\theta_{4,1}=\alpha_4$, $\theta_{4,k}=1+0.01k$ for $2\leq k\leq 10$, $\theta_{4,k}=1.1+0.02(k-10)$ for $11\leq k\leq 15$, $\theta_{4,k}=1.2+0.05(k-15)$ for $16\leq k\leq 23$ and $\theta_{4,24}=\alpha_5$.  One can verify that for $1\leq k\leq 23$, then
\renewcommand\theequation{\thesection.\arabic{equation}*}
\begin{align}
r_3(\theta_{4,k})-r_2(\theta_{4,k+1})\gy 0.002>0. \retainlabel{0.0062-Upsilon-2}
\end{align}
This implies that $r_3(\theta)>r_2(\theta)$ for all $\theta\in[\alpha_4,\alpha_5]$.

\medskip
(\textbf{Case 6})  Let $\theta\in[\alpha_5,\pi)$. Consider the condition that $a_0+r_0 r_3 (\theta) e^{\ii\theta}=(a_0+r_0 r_3(\theta)\cos\theta, r_0 r_3(\theta)\sin\theta)$ lies on the line $\{\zeta\in\C:\im\zeta=h_0+1\}$. Then
\begin{equation}
r_3(\theta)=\frac{h_0+1}{r_0\sin\theta}.
\end{equation}
Note that $\alpha_5>\tfrac{\pi}{2}$. Hence $r_3(\theta)$ increases on $[\alpha_5,\pi)$. Similar to (Case 5), for any $\theta\in[\theta',\theta'']\subset[\alpha_5,\pi)$, we have $r_3(\theta)-r_2(\theta)\geq r_3(\theta')-r_2(\theta'')$. Define $\theta_{5,1}=2.38$ and $\theta_{5,2}=2.6$. Then
\renewcommand\theequation{\thesection.\arabic{equation}*}
\begin{align}
r_3(\alpha_5)-r_2(\theta_{5,1})\Aeq{0.0277}\gy 0, ~ r_3(\theta_{5,1})-r_2(\theta_{5,2})\Aeq{0.0388}\gy 0. \retainlabel{0.0388-Upsilon-3}
\end{align}
This implies that $r_3(\theta)>r_2(\theta)$ for all $\theta\in[\alpha_5, 2.6]$.

If $\theta\in[2.6,\pi)$, then $t=\pi-\theta\in(0,\pi-2.6]\subset(0,0.6]$. We have
\begin{equation}
r_3(\theta)-r_2(\theta)= \frac{h_0+1}{r_0 \sin t}-\frac{2}{e^t-1}-1\geq \Big(\frac{h_0+1}{r_0}-2\Big)\frac{1}{t}-1
>\frac{2.8-2.14}{1.07\times 0.6}-1>0.
\end{equation}
Hence $r_3(\theta)>r_2(\theta)$ for all $\theta\in[2.6,\pi)$ and the proof is complete.
\end{proof}

\section{Lifting $Q$ and $\varphi$ to $X$} \label{sec-lift-Q}

Recall that $\MU_{12}'=\MU_1\cup\MU_{2-}\cup\MU_{3+}\cup\gamma_{b1}\cup\gamma_{b3}$ (see Figure \ref{Fig-dom-Q}).

\begin{defi}
Denote $Y_{j\pm}=(Q|_{\overline{\MU}_{j\pm}})^{-1}(\pi_X(X_{j\pm}))$ for $j=1,2$, except $Y_{2+}$ is defined by $Y_{2+}=(Q|_{\overline{\MU}_{3+}})^{-1}(\pi_X(X_{2+}))$ (see Figures \ref{Fig-Riemann-X} and \ref{Fig-dom-Q}). Let
\begin{equation}
Y:=Y_{1+}\cup Y_{1-}\cup Y_{2+}\cup Y_{2-},
\end{equation}
which is subset of $\MU_{12}'\cup\R_-\subset\C$. Define $\widetilde{Q}:Y\to X$ (whose well-definedness will be verified) by
\begin{equation}
\widetilde{Q}(\zeta)=(\pi|_{X_{j\pm}})^{-1}(Q(\zeta))\in X_{j\pm} \text{~for~} \zeta\in Y_{j\pm}.
\end{equation}
We define
\begin{equation}
\widetilde{Y}:=\C\setminus(\overline{\mathscr{D}}_{r_1}\cup\R_+\cup\OV(11,\tfrac{\pi}{6})).
\end{equation}
\end{defi}

\begin{proof}[Proof of Proposition \ref{prop-lift}]
(a) The proof is almost the same as that in \cite[\S5.H]{IS08}. It suffices to check the consistency along the boundaries of $Y_{j\pm}$ since $\widetilde{Q}$ maps $Y_{j\pm}$ homeomorphically onto $X_{j\pm}$ for $j=1,2$. We omit the details here.

\medskip
(b) By Lemma \ref{lema-E-r1}(d)(e) and Lemma \ref{lema-Q-subset-cv}, one can obtain $Y\subset\widetilde{Y}$ by a completely similar argument to \cite[Lemma 5.26]{IS08}. Hence it is sufficient to prove that $\varphi|_{\widetilde{Y}}$ can be lifted to a well-defined holomorphic map $\widetilde{\varphi}:\widetilde{Y}\to X$.

Let $\zeta\in\tuta Y$. By Lemma \ref{lema-phi-rho}, we have $\Big|\arg\frac{\varphi(\zeta)}{\zeta}\Big|<\pi$ and $|\varphi(\zeta)|>\rho$ for $\zeta\in\tuta Y$ since $\tuta Y\subset\C\setminus\overline{\mathscr{D}}_{r_1}$. We define $\tuta\varphi(\zeta)\in X$ such that $\pi_X(\tuta\varphi(\zeta))=\varphi(\zeta)$ and
\begin{equation}
\tuta\varphi(\zeta)\in
\left\{
\begin{array}{l}
X_{1+}\cup X_{2-} \quad \text{~~if~} \im \zeta\geq 0 \text{~and~} -\pi<\arg\tfrac{\varphi(\zeta)}{\zeta}\leq 0,\\
X_{1-}\cup X_{2+} \quad \text{~~if~} \im \zeta< 0 \text{~and~} 0\leq\arg\tfrac{\varphi(\zeta)}{\zeta}<\pi,\\
X_{1+}\cup X_{1-} \quad \text{~~otherwise}.
\end{array}
\right.
\end{equation}
We first prove that $\tuta\varphi$ is well-defined.

For the first case, suppose $\im \zeta\geq 0$ and $\varphi(\zeta)\in H=\{w\in\C:\arg\zeta-\pi<\arg w\leq\arg\zeta\}$.
By Lemma \ref{lema-esti-varphi}(a)(e), if $|\zeta|\geq 11$, then
\renewcommand\theequation{\thesection.\arabic{equation}*}
\begin{equation}\label{equ-varphi-est}
\begin{split}
|\varphi(\zeta)-\zeta|
\leq &~c_{00}+c_{01,max}+\varphi_{1,max}(11)\Aeq{2.304} \\
\ly &~(cv-11)\sin\tfrac{\pi}{6}\Aeq{2.973}.
\end{split}
\end{equation}
Note that $\zeta\in\C\setminus\OV(11,\frac{\pi}{6})$. We have $\varphi(\zeta)\not\in\OV(cv,\frac{\pi}{6})$ since the Euclidean distance between $\partial\OV(11,\frac{\pi}{6})$ and $\OV(cv,\frac{\pi}{6})$ is $(cv-11)\sin\tfrac{\pi}{6}$. We consider the following two subcases:
\begin{itemize}
\item If further $|\zeta|\geq 50$, since $\varphi_{1,max}(11)>\varphi_{1,max}(50)$, we have $\varphi(\zeta)\not\in\OV(cv,\frac{\pi}{6})$, $|\varphi(\zeta)-\zeta|<3$ and $|\varphi(\zeta)|\geq 47$. Since the distance between the two points $\partial\D(0,47)\cap\partial\V(cv,\tfrac{\pi}{6})$ is larger than $3$ and $\varphi(\zeta)\in H$, we have $\im \varphi(\zeta)\geq 0$. This implies that $\tuta\varphi(\zeta)$ should be defined in $X_{1+}$.
\item If further $|\zeta|< 50$, then $|\varphi(\zeta)|<53<R=125$ and $\tuta\varphi(\zeta)$ should be defined in $X_{1+}\cup X_{2-}$.
\end{itemize}
Therefore, $\tuta\varphi$ is well-defined for the first case.

Similar argument can be applied to the second case $\im \zeta< 0$ and $0\leq\arg\tfrac{\varphi(\zeta)}{\zeta}<\pi$. For the continuity of $\tuta\varphi$, it is sufficient to consider the switching case: $\im\zeta=0$ or $\arg\tfrac{\varphi(\zeta)}{\zeta}=0$. The continuity follows by definition. Once the continuity of $\tuta\varphi$ is obtained, it is holomorphic.
\end{proof}

\section{Estimates on $F$}

Recall that $b_0$ and $b_1$, $c_{00}$ and $c_{01,max}$ are constants introduced in Lemma \ref{lema-Q} and Lemma \ref{lema-esti-varphi} respectively.
\begin{lem}\label{lema-esti-F}
Suppose $r\geq r'=6.1$ and $\re (\zeta e^{-\ii\theta})>r$ with $\theta\in\R$. Then the following estimates hold for $z=\varphi(\zeta)$:
\begin{enumerate}
\item $F(z)-z\in\D\big(b_0-c_{00}+\frac{b_1 e^{-\ii\theta}}{2r},\beta_{max}(r)\big)$, where
\begin{equation}
\beta_{max}(r):=c_{01,max}+\frac{b_1}{2r}+Q_{2,max}(r)+\varphi_{1,max}(r);
\end{equation}

\item $Arg\Delta F_{min}(r,\theta)\leq\arg(F(z)-z)\leq Arg\Delta F_{max}(r,\theta)$, where
\begin{equation}
Arg\Delta F_{\left\{\substack{max \\ min}\right\}}(r,\theta):= -\arctan\big(\sigma_1(r,\theta)\big)
\pm\arcsin\left(\frac{\beta_{max}(r)}{\sigma_2(r,\theta)}\right)
\end{equation}
and
\begin{equation}
\begin{split}
\sigma_1(r,\theta)=&~\frac{\frac{b_1\sin\theta}{2r}}{b_0-c_{00}+\frac{b_1\cos\theta}{2r}},\\
\sigma_2(r,\theta)=&~\sqrt{\left(b_0-c_{00}\right)^2+\left(\tfrac{b_1}{2r}\right)^2+2\left(b_0-c_{00}\right)\left(\tfrac{b_1}{2r}\right)\cos\theta};
\end{split}
\end{equation}

\item $Abs\Delta F_{min}(r,\theta)\leq|F(z)-z|\leq Abs\Delta F_{max}(r,\theta)$, where
\begin{equation}
Abs\Delta F_{\left\{\substack{max \\ min}\right\}}(r,\theta):= \sigma_2(r,\theta)\pm\beta_{max}(r);
\end{equation}

\item $|\log F'(z)|\leq LogDF_{max}(r):=LogDQ_{max}(r)+LogD\varphi_{max}(r)$.
\end{enumerate}
\end{lem}

\begin{proof}
If $r\geq r'=6.1$, then
\renewcommand\theequation{\thesection.\arabic{equation}*}
\begin{align}
b_0-c_{00}-\frac{b_1}{2\cdot r'}\Aeq{5.5090}\gy \beta_{max}(r')\Aeq{5.5046}.\retainlabel{5.5046-IS}
\end{align}
This is the only place that needs to be verified in the proof of \cite[Lemma 5.27]{IS08}. We will not include a proof here since the proof is completely the same.
\end{proof}

\begin{lem}\label{lema-F-cv}
\begin{enumerate}
\item $F(\OV(cv,\frac{\pi}{6}))\subset\V(17,\frac{\pi}{6})\subset\V(cv,\frac{\pi}{6})$;
\item $F(cv)\in\D(25.5,3)$, $\OV(F(cv),\frac{7\pi}{20})\subset \V(22,\frac{7\pi}{20})$, $\OV(F(cv),\frac{13\pi}{20})\subset \V(22,\frac{13\pi}{20})$; and
\item $F^{\circ 2}(cv)\in\V(F(cv),\frac{7\pi}{20})$.
\end{enumerate}
\end{lem}

\begin{proof}
(a) By the estimate \eqref{equ-varphi-est}, if $\zeta\in\C\setminus\OV(11,\tfrac{\pi}{6})$, then $\varphi(\zeta)\not\in\OV(cv,\frac{\pi}{6})$. This implies that $\varphi^{-1}(\OV(cv,\frac{\pi}{6}))\subset\OV(11,\frac{\pi}{6})$. By Lemma \ref{lema-Q-subset-cv}, we have $F(\OV(cv,\frac{\pi}{6}))$ $=Q\circ\varphi^{-1}(\OV(cv,\frac{\pi}{6}))\subset Q(\OV(11,\frac{\pi}{6}))\subset\V(17,\frac{\pi}{6})\subset\V(cv,\frac{\pi}{6})$.

\medskip
(b) Based on $\varphi(\zeta)=\zeta+c_0+\varphi_1(\zeta)$ and Lemma \ref{lema-esti-varphi}(b), we write
\begin{equation}\label{equ:varphi-inv}
\varphi^{-1}(z)=z-c_0+\phi_1(z),
\end{equation}
where $\varphi^{-1}$ is a univalent map defined in $\{z\in\C:|z|>4r_0\}$ with $\lim_{z\to\infty}\phi_1(z)=0$. Similar to Lemma \ref{lema-esti-varphi}(e), for $|z|\geq r> 4r_0=4.28$, we have
\begin{equation}\label{equ:phi-1-est}
|\phi_1(z)|\leq\phi_{1,max}(r):=4 r_0\sqrt{-\log\Big(1-(\tfrac{4r_0}{r})^2\Big)}.
\end{equation}
Then by Lemma \ref{lema-esti-varphi}(a), if $|z|\geq cv-2.35$, we have
\renewcommand\theequation{\thesection.\arabic{equation}*}
\begin{equation}\label{equ-varphi-inv-est}
|\varphi^{-1}(z)-z|\leq c_{00}+c_{01,max}+\phi_{1,max}(cv-2.35)\Aeq{3.483}\ly 3.5.
\end{equation}

We improve this bound as follows.
Note that $\big|\log|\varphi'(\zeta)|\big|\leq |\log\varphi'(\zeta)|$ and $|\varphi'(\zeta)|=\exp(\log|\varphi'(\zeta)|)$. By Lemma \ref{lema-esti-varphi}(f), if $|\zeta|\geq r>r_0+|a_0|=1.13$, we have
\begin{equation}\label{equ:varphi-deriv}
\exp(-LogD\varphi_{max}(r))\leq|\varphi'(\zeta)|\leq\exp(LogD\varphi_{max}(r)).
\end{equation}
By \eqref{equ-varphi-est}, if $|\zeta|\geq cv-5.9>11$, then $|\varphi(\zeta)-\zeta| \leq c_{00}+c_{01,max}+\varphi_{1,max}(cv-5.9)<c_{00}+c_{01,max}+\varphi_{1,max}(11)<2.35$.
By \eqref{equ-varphi-inv-est} and Lemma \ref{lema-esti-varphi}(f), if $|z|\geq cv$, then
\renewcommand\theequation{\thesection.\arabic{equation}*}
\begin{align}
&|\varphi^{-1}(z)-z| =|\varphi^{-1}(z)-\varphi^{-1}(\varphi(z))|\\
\leq~&|\varphi(z)-z|\cdot\max_{\xi\in\overline{\D}(z,2.35)}|(\varphi^{-1})'(\xi)|=|\varphi(z)-z|\cdot\max_{\xi\in\overline{\D}(z,2.35)}\frac{1}{|\varphi'(\varphi^{-1}(\xi))|} \\
\leq~&\frac{2.35}{\exp\big(-LogD\varphi_{max}(cv-2.35-3.5)\big)}\Aeq{2.37}\ly 2.4. \label{2.3-2.4-cv}
\end{align}
Therefore, if $|z|\geq cv$, by Lemma \ref{lema-injective},
\renewcommand\theequation{\thesection.\arabic{equation}*}
\begin{align}
&~|F(z)-Q(z)|=|Q(\varphi^{-1}(z))-Q(z)| \\
<~&2.4\cdot\exp(LogDQ_{max}(cv-2.4)) \Aeq{2.708}\ly 2.75. \retainlabel{2.703-2.8}
\end{align}
Then we have
\renewcommand\theequation{\thesection.\arabic{equation}*}
\begin{align}
|F(cv)-25.5|< 2.75+|Q(cv)-25.5|\Aeq{2.835}\ly 3. \retainlabel{2.835-Fcv}
\end{align}
Since
\renewcommand\theequation{\thesection.\arabic{equation}*}
\begin{align}
(25.5-22)\sin(\tfrac{7\pi}{20})\Aeq{3.11}\gy 3, \retainlabel{22.13-22}
\end{align}
we have $\OV(F(cv),\frac{7\pi}{20})\subset \V(22,\frac{7\pi}{20})$ and $\OV(F(cv),\frac{13\pi}{20})\subset \V(22,\frac{13\pi}{20})$.

\medskip
(c) Note that $|F(cv)|\geq 25.5-3>cv$ by Part (b). According to \eqref{2.3-2.4-cv}, we have $\zeta:=\varphi^{-1}(F(cv))\in\D(F(cv), 2.4)$. Then $|\zeta|\geq |F(cv)|-2.4> 25.5-3-2.4>20$. Let $b_0$ and $b_1$ be constants in Lemma \ref{lema-Q}. Then
\begin{align}
\left|\tfrac{b_1}{\zeta}+Q_2(\zeta)\right|\leq \tfrac{b_1}{20}+Q_{2,max}(20)\Aeq{1.136}\ly 1.2. \retainlabel{20-b1-zeta}
\end{align}
Hence $F^{\circ 2}(cv)=Q(\zeta)\in\D(\zeta+b_0,1.2)\subset\D(F(cv)+b_0,3.6)$. Since $b_0\sin\frac{7\pi}{20}>7.3\cdot\sin\frac{\pi}{4}>\frac{7.3}{2}>3.6$, we have $F^{\circ 2}(cv)\in\V(F(cv),\frac{7\pi}{20})$.
\end{proof}

\section{Repelling Fatou coordinate $\widetilde{\Phi}_{rep}$ on $X$}~ \label{sec-rep}

\begin{proof}[Proof of Proposition \ref{prop-rep-Fatou}]
On the Riemann surface $X$, we have $F\circ\pi_X\circ g=Q\circ\varphi^{-1}\circ\pi_X\circ\tuta\varphi\circ\tuta Q^{-1}=Q\circ\tuta Q^{-1}=\pi_X$ (see Figure \ref{Fig-commu-diagram}). In a small neighborhood of $\infty$, $F$ has an inverse branch $\overline{g}(z)=z-(b_0-c_0)+o(1)$ as $z\to\infty$. By Lemma \ref{lema-basic-esti} and Lemma \ref{lema-esti-varphi}(a), we have
\begin{equation}
\big|\arg(b_0-c_0)\big|
=\left|\arg\left(1+\frac{c_{00}-c_0}{b_0-c_{00}}\right)\right|
\leq \frac{\pi}{3}\cdot\frac{2.14}{7.3-0.06}<\frac{\pi}{9}.
\end{equation}
Let $L>0$ be a large number such that $\overline{g}$ is defined and injective in $W=\C\setminus\OV(-L,\frac{\pi}{9})$, and moreover, it satisfies $|\arg(\overline{g}(z)-z)-\pi|<\frac{\pi}{9}$, $\overline{g}(W)\subset W$ and $\re \overline{g}(z)<\re z-(b_0-c_{00})+c_{01,max}+1<\re z-4$. The rest proof can be obtained by a word for word copy of \cite[\S5.J]{IS08}.
In the following we only verify that in $\{z\in\C:\re z<-2R\}$, $F=Q\circ\varphi^{-1}$ has a repelling Fatou coordinate $\Phi_{rep}$ which is injective.

\medskip
Recall that $Q(\zeta)=\zeta+b_0+\frac{b_1}{\zeta}+Q_2(\zeta)$. We denote $\zeta:=\varphi^{-1}((b_0-c_0)z)$. By \eqref{equ:varphi-inv}, we have
\begin{equation}
\widehat{F}(z):=\frac{F((b_0-c_0)z)}{b_0-c_0}=z+1+\frac{\phi_1((b_0-c_0)z)+\frac{b_1}{\zeta}+Q_2(\zeta)}{b_0-c_0}.
\end{equation}
In the following we prove that $|\widehat{F}(z)-(z+1)|<\frac{1}{4}$ and $|\widehat{F}'(z)-1|<\frac{1}{4}$ for $|z|\geq \widehat{r}=25$.

By Lemma \ref{lema-esti-varphi}(a), we have $|c_0-0.06|\leq 2.14$. Since $b_0\in[7.31,7.32]$, thus $b_0-c_0\in\OD(7.25,2.15)$ and $5< |b_0-c_0|< 9.5$. For $|z|\geq \widehat{r}$, we have $|(b_0-c_0)z|\geq 5\widehat{r}$. By \eqref{2.3-2.4-cv}, $|\zeta|\geq 5\widehat{r}-3=122$. By Lemma \ref{lema-Q} and \eqref{equ:phi-1-est}, we have
\renewcommand\theequation{\thesection.\arabic{equation}*}
\begin{align}
|\widehat{F}(z)-(z+1)|\leq \frac{\phi_{1,max}(125)+\frac{b_1}{122}+Q_{2,max}(122)}{5}\Aeq{0.064}\ly\frac{1}{4}. \retainlabel{0.2051-1-4}
\end{align}
By Lemma \ref{lema-esti-F}(d), we have
\renewcommand\theequation{\thesection.\arabic{equation}*}
\begin{align}
|\log \widehat{F}'(z)|=|\log F'((b_0-c_0)z)|\leq LogDF_{max}(5\widehat{r})\Aeq{0.0014}\ly 0.01. \retainlabel{0.0014-1-4}
\end{align}
This implies that $|\widehat{F}'(z)-1|<0.01e^{0.01}<\frac{1}{4}$.
Note that $\{z\in\C:\re \big((b_0-c_0)z)\leq -9.5\widehat{r}\}\subset\{z\in\C:|z|> \widehat{r}\}$. By \cite[Proposition 2.5.2]{Shi00a}, there exists a repelling Fatou coordinate $\widehat{\Phi}_{rep}:\{z\in\C:\re \big((b_0-c_0)z)<-9.5\widehat{r}\}\to\C$ such that $\widehat{\Phi}_{rep}$ is univalent and $\widehat{\Phi}_{rep}(\widehat{F}(z))=\widehat{\Phi}_{rep}(z)+1$.
Define $\Phi_{rep}(z):=\widehat{\Phi}_{rep}(\frac{z}{b_0-c_0})$. Note that $2R=10\widehat{r}>9.5\widehat{r}$. Then $\Phi_{rep}$ is univalent in $\{z\in\C:\re z<-9.5\widehat{r}\}$ and we have $\Phi_{rep}(F(z))=\Phi(z)+1$ for $\{z\in\C:\re z<-2R\}$.
\end{proof}

\section{Attracting Fatou coordinate $\Phi_{attr}$}\label{sec-Phi-attr}

\begin{defi}
For $0\leq \theta<\frac{\pi}{2}$, let $pr_+^\theta(z)=\re (ze^{-\ii\theta})$ and $pr_-^\theta(z)=\re (ze^{+\ii\theta})$ be the orthogonal projections to the line with angle $\pm\theta$ to the real axis respectively. Define two angles
\begin{equation}
\theta_1:=\tfrac{3\pi}{20} \quad\text{and}\quad \theta_2:=\tfrac{\pi}{4};
\end{equation}
some constants
\begin{align}
u_{1,\theta_1}&:=8.5, & u_{2,\theta_1}&:=6.1, \quad u_3:=22\cos \theta_1\Aeq{19.60},\quad u_4:=17.3, \\
u_{1,\theta_2}&:=9, & u_{2,\theta_2}&:=6.6;
\end{align}
and some half planes ($i=1,2$)
\begin{align}
H_{1,\theta_i}^\pm & :=\{z\in\C:pr_\pm^{\theta_i}(z)>u_{1,\theta_i}\}, &  H_{2,\theta_i}^\pm & :=\{z\in\C:pr_\pm^{\theta_i}(z)>u_{2,\theta_i}\}, \\
H_3^\pm & :=\{z\in\C:pr_\pm^{\theta_1}(z)\geq u_3\}, & H_4^\pm & :=\{z\in\C:pr_\pm^{\theta_1}(z)\geq u_4\}.
\end{align}
\end{defi}

\begin{lem}[{Attracting Fatou coordinate $\Phi_{attr}$}]\label{lema-attr}
We have
\begin{enumerate}
\item $\varphi(H_{2,\theta_i}^\pm)\supset H_{1,\theta_i}^\pm$ for $i=1,2$ and $\varphi(H_4^\pm)\supset H_3^\pm$. Hence $F$ is defined in $H_{1,\theta_i}^+\cup H_{1,\theta_i}^-$ for $i=1,2$;
\item $Q$ is injective in $H_{2,\theta_i}^\pm$. Therefore, $F$ is injective in $H_{1,\theta_i}^\pm$ for $i=1,2$;
\item  If $z\in H_{1,\theta_i}^\pm$ for $i=1,2$, then $|\arg(F(z)-z)|<\tfrac{\pi}{2}-\theta_i$, hence $F(H_{1,\theta_i}^\pm)\subset H_{1,\theta_i}^\pm$. In particular, $H_{1,\theta_i}^+\cup H_{1,\theta_i}^-=\V(u_{0,\theta_i},\tfrac{\pi}{2}+\theta_i)$ is forward invariant under $F$ and contained in the immediate parabolic basin of $\infty$, where
    \begin{equation}
    u_{0,\theta_1}=u_{1,\theta_1}/\cos\theta_1\Aeq{9.53}\quad\text{and}\quad u_{0,\theta_2}=u_{1,\theta_2}/\cos\theta_2\Aeq{12.72};
    \end{equation}
\item  There exists an attracting Fatou coordinate $\Phi_{attr}$ for $F$ in $\V(u_{0,\theta_1},\tfrac{13\pi}{20})\cup \V(u_{0,\theta_2},\tfrac{3\pi}{4})$ and it is injective in each of $H_{1,\theta_i}^\pm$.
\end{enumerate}
\end{lem}

In the following, we normalize the Fatou coordinate $\Phia$ such that $\Phia(cv)=1$.

\begin{proof}
(a) By Lemma \ref{lema-esti-varphi}(b), the half planes $H_{1,\theta_i}^\pm$ is contained in $Image(\varphi)$. If $\zeta\in\partial H_{2,\theta_1}^\pm$, by Lemma \ref{lema-esti-varphi}(e) we have
\renewcommand\theequation{\thesection.\arabic{equation}*}
\begin{align}
&~ pr_\pm^{\theta_1}(\varphi(\zeta))
=    pr_\pm^{\theta_1}(\zeta)+pr_\pm^{\theta_1}(c_{00})+pr_\pm^{\theta_1}(c_0-c_{00})+pr_\pm^{\theta_1}(\varphi_1(\zeta)) \notag \\
\leq &~u_{2,\theta_1}+c_{00}\cos\theta_1+c_{01,max}+\varphi_{1,max}(u_{2,\theta_1})\Aeq{8.484}\ly 8.5. \retainlabel{equ:u-2-2}
\end{align}
Similarly, we have
\renewcommand\theequation{\thesection.\arabic{equation}*}
\begin{align}
&~ pr_\pm^{\theta_2}(\varphi(\zeta))
=    pr_\pm^{\theta_2}(\zeta)+pr_\pm^{\theta_2}(c_{00})+pr_\pm^{\theta_2}(c_0-c_{00})+pr_\pm^{\theta_2}(\varphi_1(\zeta)) \notag \\
\leq &~u_{2,\theta_2}+c_{00}\cos\theta_2+c_{01,max}+\varphi_{1,max}(u_{2,\theta_2})\Aeq{8.958}\ly 9. \retainlabel{equ:8.161-8.2}
\end{align}
Hence $\varphi(\zeta)\not\in H_{1,\theta_i}^\pm$ for $i=1,2$. This implies that $\varphi^{-1}(H_{1,\theta_i}^\pm)$ is contained in one side of $\partial H_{2,\theta_i}^\pm$. On the other hand, if $\zeta\in H_{1,\theta_i}^\pm$ is a point with large real part, then $\varphi^{-1}(\zeta)\in H_{2,\theta_i}^\pm$. This implies that $\varphi^{-1}(H_{1,\theta_i}^\pm)\subset H_{2,\theta_i}^\pm$ and hence $\varphi(H_{2,\theta_i}^\pm)\supset H_{1,\theta_i}^\pm$.

If $\zeta\in\partial H_4^\pm$, then
\renewcommand\theequation{\thesection.\arabic{equation}*}
\begin{align}
pr_\pm^{\theta_1}(\varphi(\zeta))
\leq &~u_4+c_{00}\cos\theta_1+c_{01,max}+\varphi_{1,max}(u_4)\Aeq{19.55} \notag\\
\ly    &~ pr_\pm^{\theta_1}(22)\Aeq{19.60}. \retainlabel{equ:u_4-2}
\end{align}
By the same argument as above one can show that $\varphi(H_4^\pm)\supset H_3^\pm$.

\medskip
(b) By Lemma \ref{lema-injective}(c), $Q$ is an injection in $H_{2,\theta_i}^\pm$. Then $F=Q\circ\varphi^{-1}$ is injective in $H_{1,\theta_i}^\pm$ since $\varphi^{-1}(H_{1,\theta_i}^\pm)\subset H_{2,\theta_i}^\pm$.

\medskip
(c) If $z\in H_{1,\theta_1}^\pm$, then $\zeta=\varphi^{-1}(z)\in H_{2,\theta_1}^\pm$ by Part (a). By Lemma \ref{lema-esti-F}(b), we have
\renewcommand\theequation{\thesection.\arabic{equation}*}
\begin{align}
|\arg(F(z)-z)|
& ~\leq\max\{Arg\Delta F_{max}(u_{2,\theta_1},\pm\theta_1),-Arg\Delta F_{min}(u_{2,\theta_1},\pm\theta_1)\} \notag\\
& ~(\doteqdot\max\{0.7619\ldots,0.5827\ldots\})\ly 1< \tfrac{7\pi}{20}. \retainlabel{equ:ArgFminusz}
\end{align}
Similarly, if $z\in H_{1,\theta_2}^\pm$, then $\zeta=\varphi^{-1}(z)\in H_{2,\theta_2}^\pm$ by Part (a). By Lemma \ref{lema-esti-F}(b), we have
\renewcommand\theequation{\thesection.\arabic{equation}*}
\begin{align}
|\arg(F(z)-z)|
& ~\leq\max\{Arg\Delta F_{max}(u_{2,\theta_2},\pm\theta_2),-Arg\Delta F_{min}(u_{2,\theta_2},\pm\theta_2)\} \notag\\
& ~(\doteqdot\max\{0.7767\ldots,0.5069\ldots\})\ly \tfrac{\pi}{4}\Aeq{0.7853}. \retainlabel{equ:pi-0.7853}
\end{align}
This implies that each of $H_{1,\theta_i}^{\pm}$ is forward invariant under $F$ and hence $H_{1,\theta_i}^+\cup H_{1,\theta_i}^-=\V(u_{0,\theta_i},\frac{\pi}{2}+\theta_i)$ is also. The assertion that $\V(u_{0,\theta_i},\frac{\pi}{2}+\theta_i)$ is contained in the immediate parabolic basin of $\infty$ follows immediately.

\medskip
(d) This part can be proved as that of \cite[Proposition 5.5]{IS08}.
\end{proof}

\begin{lem}[{Estimates on $\Phi_{attr}$}]\label{lema-est-Phi}
We have
\begin{enumerate}
\item The attracting Fatou coordinate $\Phi_{attr}$ satisfies the following inequalities:
\renewcommand\theequation{\thesection.\arabic{equation}}
\begin{gather}
-\tfrac{3\pi}{20}<\arg\Phi_{attr}'(z)<\tfrac{\pi}{5} \text{~for~} z\in H_3^+ \text{~and~} -\tfrac{\pi}{5}<\arg\Phi_{attr}'(z)<\tfrac{3\pi}{20} \text{~for~} z\in H_3^-,\\
0.083<|\Phi_{attr}'(z)|<0.228 \text{~for~} z\in H_3^+\cup H_3^-=\OV(22,\tfrac{13\pi}{20});
\end{gather}

\item $\Phi_{attr}$ is injective in $H_3^+\cup H_3^-=\OV(22,\tfrac{13\pi}{20})\supset \OV(F(cv),\frac{13\pi}{20})$. There exists a domain $\MH_2$ such that $\Phi_{attr}$ is a homeomorphism from $\overline{\MH}_2$ onto $\{w\in\C:\re w\geq 2\}$, and $\MH_2$ satisfies $\OV(F(cv),\tfrac{7\pi}{20})\subset\MH_2\cup\{F(cv)\}\subset\overline{\MH}_2
    \subset\V(F(cv),\tfrac{13\pi}{20})\cup\{F(cv)\}$ and $F(cv)\in\partial\MH_2$.
\end{enumerate}
\end{lem}

\begin{proof}
By Lemma \ref{lema-F-cv}(b), we have $\OV(F(cv),\frac{13\pi}{20})\subset \OV(22,\frac{13\pi}{20})$ and it is easy to see that $H_3^+\cup H_3^-=\OV(22,\frac{13\pi}{20})$. The proof of this proposition is similar to \cite[Lemma 5.31]{IS08}. We only check the numerical results of Part (a).

\medskip

Suppose $z\in H_3^+$. Then $\zeta=\varphi^{-1}(z)\in H_4^+$, i.e., $\re (\zeta e^{-\ii\theta_1})\geq u_4=17.3$. We claim that
\renewcommand\theequation{\thesection.\arabic{equation}}
\begin{equation}\label{dom-1}
F(z)\in\D_{H_{1,\theta_1}^+}\big(z,s(r_4)\big)
\end{equation}
with $r_4=0.34$, where $s(r)=\log\tfrac{1+r}{1-r}$ is defined in Lemma \ref{lema-esti-re}(b). By applying Lemma \ref{lema-esti-re}(b) with $H=H_{1,\theta_1}^+$, $t=u_{1,\theta_1}$, $u=pr_+^{\theta_1}(z)-u_{1,\theta_1}$, $r=r_4$ and $\theta=\theta_1$, \eqref{dom-1} is equivalent to
\begin{equation}\label{dom-2}
F(z)-z\in\D\left(\frac{2ur_4^2e^{\ii\theta_1}}{1-r_4^2},\frac{2ur_4}{1-r_4^2}\right).
\end{equation}
This disk contains $0$ since $|r_4^2e^{\ii\theta_1}|<r_4$ and it is increasing with $u$. Therefore, we only need to check \eqref{dom-2} with the smallest $u$, i.e., $u_5=u_3-u_{1,\theta_1}=pr_+^{\theta_1}(22)-8.5$. As in the proof of \cite[Lemma 5.27 (a)]{IS08}, one can write $F(z)-z=\alpha+\beta$, where $\alpha=b_0-c_{00}+\frac{b_1 e^{-\ii\theta_1}}{2u_4}$ and $|\beta|\leq\beta_{max}(u_4)$. By a numerical calculation, we have
\renewcommand\theequation{\thesection.\arabic{equation}*}
\begin{align}
&~\left|\alpha-\frac{2u_5r_4^2e^{\ii\theta_1}}{1-r_4^2}\right|+\beta_{max}(u_4)-\frac{2u_5 r_4}{1-r_4^2} \notag\\
=&~\sqrt{\Big(b_0-c_{00}+\Big(\frac{b_1}{2u_4}-\frac{2u_5 r_4^2}{1-r_4^2}\Big)\cos \theta_1\Big)^2
+\Big(\frac{b_1}{2u_4}+\frac{2u_5 r_4^2}{1-r_4^2}\Big)^2\sin^2\theta_1} \notag\\
&~+\beta_{max}(u_4)-\frac{2u_5 r_4}{1-r_4^2}\Aeq{-0.1616}\ly 0. \retainlabel{equ:Alpha-Beta-2}
\end{align}
This implies that \eqref{dom-1} and \eqref{dom-2} are true.

\medskip
Applying Theorem \ref{thm-est-Fatou} to $\Phi_{attr}$ with $\Omega=H_{1,\theta_1}^+$ and $r=r_4$, by Lemma \ref{lema-esti-F} we have
\renewcommand\theequation{\thesection.\arabic{equation}*}
\begin{align}
\arg\Phi_{attr}'(z)
\leq&~ -\arg(F(z)-z)+\frac{1}{2}\left|\log F'(z)\right|+\frac{1}{2}\log\frac{1}{1-r_4^2} \notag\\
\leq&~ -Arg\Delta F_{min}(u_4,\theta_1)+\frac{1}{2}LogDF_{max}(u_4)-\frac{1}{2}\log(1-r_4^2) \notag\\
&~ \Aeq{0.5244} \ly \tfrac{\pi}{5}\Aeq{0.6283} \retainlabel{equ:PhiArg-sup}
\end{align}
and
\begin{align}
\arg\Phi_{attr}'(z)
\geq&~-\arg(F(z)-z)-\frac{1}{2}\left|\log F'(z)\right|-\frac{1}{2}\log\frac{1}{1-r_4^2} \notag\\
\geq&~-Arg\Delta F_{max}(u_4,\theta_1)-\frac{1}{2}LogDF_{max}(u_4)+\frac{1}{2}\log(1-r_4^2)\notag\\
&~\Aeq{-0.4530} \gy -0.46 \gy -\tfrac{3\pi}{20}\Aeq{-0.4712}.\label{equ:arg-lower}
\end{align}
The similar estimate can be applied to $z\in H_3^-$.

For $|\Phi_{attr}'(z)|$ on $H_3^+$ or $H_3^-$, by Theorem \ref{thm-est-Fatou} and Lemma \ref{lema-esti-F}, we have
\begin{align}
|\Phi_{attr}'(z)|
& \leq \exp\left(-\log |F(z)-z|+\frac{1}{2}\left|\log F'(z)\right|+\frac{1}{2}\log\frac{1}{1-r_4^2}\right) \notag\\
& \leq \frac{\exp\left(\tfrac{1}{2}LogDF_{max}(u_4)\right)}{Abs\Delta F_{min}(u_4,\theta_1)\sqrt{1-r_4^2}}\Aeq{0.2275}\ly 0.228 \retainlabel{equ:PhiAbs-sup}
\end{align}
and
\begin{align}
&~|\Phi_{attr}'(z)|
\geq \exp\left(-\log |F(z)-z|-\frac{1}{2}\left|\log F'(z)\right|-\frac{1}{2}\log\frac{1}{1-r_4^2}\right)\notag\\
\geq &~ \frac{\sqrt{1-r_4^2}}{Abs\Delta F_{max}(u_4,\theta_1)\exp\left(\tfrac{1}{2}LogDF_{max}(u_4)\right)}\Aeq{0.0839}\gy 0.083. \retainlabel{equ:PhiAbs-inf}
\end{align}
This finishes the proof.
\end{proof}

Recall that $R_1=108$ is defined just before Proposition \ref{prop-Phi}.

\begin{lem}\label{lema-W2}
Let $R_2=R_1-9=99$.
There are domains $D_2$, $D_2^\pm$ in
\begin{equation}
W_2:=\V(F(cv),\tfrac{13\pi}{20})\setminus\overline{\V}(F^{\circ 2}(cv),\tfrac{7\pi}{20})
\end{equation}
such that
\begin{equation}
\begin{split}
\Phia(D_2)=&~\{w\in\C:2<\re w<3,\,-\eta<\im w<\eta\}, \text{~and}\\
\Phia(D_2^\pm)=&~\{w\in\C:2<\re w<3,\,\pm\,\im w>\eta\},
\end{split}
\end{equation}
where $D_2\subset\D(F(cv),R_2)$ and $D_2^\pm\subset\{z\in\C:0.785<\pm\,\arg(z-F(cv))<\tfrac{13\pi}{20}\}$.
\end{lem}

\begin{proof}
Note that $\Phia:\overline{\MH}_2\to\{w\in\C:\re w\geq 2\}$ is a homeomorphism by Lemma \ref{lema-est-Phi}(b). We define
\begin{equation}
\begin{split}
D_2:=&~\Phia^{-1}(\{w\in\C:2<\re w<3,\,-\eta<\im w<\eta\}) \text{ and }\\
D_2^\pm:=&~\Phia^{-1}(\{w\in\C:2<\re w<3,\,\pm\,\im w>\eta\}).
\end{split}
\end{equation}
By Lemma \ref{lema-F-cv}(c), we have $F^{\circ 2}(cv)\in\V(F(cv),\frac{7\pi}{20})$. Hence $\OV(F^{\circ 2}(cv),\frac{7\pi}{20})\subset \V(F(cv),\frac{7\pi}{20})$.
By Lemma \ref{lema-F-cv}(b), we have $\OV(F(cv),\frac{7\pi}{20})\subset\overline{\V}(22,\frac{7\pi}{20})=H_3^+\cap H_3^-$.

We need to use the following result which appears in the proof of \cite[Lemma 5.31(b)]{IS08}:
Let $U$ be a domain in $\C$ and $f:U\to\C$ be a holomorphic map with $f'(z)\neq 0$ for all $z\in U$. Let $[z_1,z_2]\subset U$ be a non-degenerate closed segment. Then
\renewcommand\theequation{\thesection.\arabic{equation}}
\begin{equation}\label{equ:criterion-1}
\theta<\arg\frac{f(z_2)-f(z_1)}{z_2-z_1}<\theta'
\text{\quad if \quad}
\theta<\arg f'(z)<\theta'\leq\theta+\pi \text{ on } [z_1,z_2].
\end{equation}

Applying \eqref{equ:criterion-1} to $z_1=F^{\circ 2}(cv)$, $z_2\in\partial \V(F^{\circ 2}(cv),\frac{7\pi}{20})$ with $\im z_2>\im F^{\circ 2}(cv)$, $f=\Phia$, $\theta=-\frac{3\pi}{20}$ and $\theta'=\frac{3\pi}{20}$ (note that $\Phia(F^{\circ 2}(cv))=3$ and $\OV(F^{\circ 2}(cv),\frac{7\pi}{20})\subset H_3^+\cap H_3^-$), we have
\begin{equation}
-\tfrac{3\pi}{20}<\arg(\Phia(z_2)-3)-\tfrac{7\pi}{20}<\tfrac{3\pi}{20}.
\end{equation}
Similarly, if $z_2\in\partial \V(F^{\circ 2}(cv),\frac{7\pi}{20})$ with $\im z_2<\im F^{\circ 2}(cv)$, then we have
$-\tfrac{3\pi}{20}<\arg(\Phia(z_2)-3)+\tfrac{7\pi}{20}<\tfrac{3\pi}{20}$. Therefore, $|\arg(\Phia(z_2)-3)|<\frac{\pi}{2}$ for all
$z_2\in\partial \V(F^{\circ 2}(cv),\frac{7\pi}{20})\setminus\{F^{\circ 2}(cv)\}$ and hence $\re \Phia(z_2)>3$.
By Lemma \ref{lema-est-Phi}(b), $\MH_2\subset\V(F(cv),\tfrac{13\pi}{20})$. This implies that $D_2$, $D_2^+$ and $D_2^-$ are all contained in $W_2$.

\medskip
Similarly, if $|\arg(z-F(cv))|\leq 0.785<\tfrac{7\pi}{20}$ (hence $z\in\V(F(cv),0.785)\subset H_3^+\cap H_3^-$), by \eqref{equ:arg-lower} we have $|\arg(\Phia(z)-2)|<0.46+0.785=1.245$.
Then $\Phia(z)$ cannot be in $\{w\in\C:2<\re w<3,|\im w|>\eta\}$ since
\renewcommand\theequation{\thesection.\arabic{equation}*}
\begin{equation}\label{eta-geq}
\tan(1.245)\Aeq{2.960}\ly \eta=3.
\end{equation}
This implies that $D_2^+$ and $D_2^-$ are contained in $\{z\in\C:0.785<\pm\arg(z-F(cv))<\frac{13\pi}{20}\}$ respectively.

Finally, it remains to show $D_2\subset\D(F(cv),R_2)$. Note that the derivative of $\Phia^{-1}$ is bounded above by $\frac{1}{0.083}$ in $\Phia(D_2)=\{w\in\C:2<\re w<3,\,-\eta<\im w<\eta\}$ by Lemma \ref{lema-est-Phi}(a). Since $\Phia(D_2)\subset\D(2,\sqrt{1+\eta^2})$, we have $D_2\subset\D(F(cv),\sqrt{1+\eta^2}/0.083)$. Therefore, we only need to check that $\sqrt{1+\eta^2}/0.083<R_2=99$. Actually, this is true even for a bigger $\eta$ such as $\eta=8$ since
\renewcommand\theequation{\thesection.\arabic{equation}*}
\begin{equation}\label{eta-leq}
\sqrt{1+8^2}/0.083\Aeq{97.135}\ly 99.
\end{equation}
This finishes the proof.
\end{proof}

Recall that $\MU_1=\MU_1^Q=\MU_{1+}\cup\MU_{1-}\cup\gamma_{a1}$ is defined in \S\ref{sec-P-to-Q} (see Figure \ref{Fig-dom-Q}) and let $\MH_2$ be introduced in Lemma \ref{lema-est-Phi}(b).
The next work is to study the preimage of $W_2$ under $F=Q\circ\varphi^{-1}$.

\begin{lem}\label{lema-W1}
Let $W_1'$ be the component of $Q^{-1}(W_2)$ in $\MU_1$ and denote $\MH_1:=\varphi\big((Q|_{\MU_1})^{-1}(\MH_2)\big)$. Then
\begin{enumerate}
\item $\varphi$ is well defined on $\overline{W_1'}$ and $\varphi(\overline{W_1'})\setminus\{cv, F(cv)\}\subset W_1=\V(cv,\tfrac{3\pi}{4})\setminus\overline{\V}(F(cv),\tfrac{\pi}{4})$; and
\item $\overline{\V}(cv,\tfrac{\pi}{6})\subset\MH_1\cup\{cv\}\subset \overline{\MH}_1\subset \V(cv,\tfrac{3\pi}{4})\cup\{cv\}$.
\end{enumerate}
\end{lem}

\begin{proof}
(a) Recall that $\theta_1=\frac{3\pi}{20}$. We first claim that $\overline{W_1'}\subset\V(u_6/\cos\theta_1,\frac{13\pi}{20})$, where $u_6=10.7$. Note that $\V(F(cv)$, $\tfrac{13\pi}{20})\subset\V(22,\frac{13\pi}{20})$ by Lemma \ref{lema-F-cv}(b). To prove the claim, it is sufficient to show that if $\zeta\in \partial\V(u_6/\cos\theta_1,\frac{13\pi}{20})$, then $Q(\zeta)\in \C\setminus\OV(22,\tfrac{13\pi}{20})$. This is true since by Lemma \ref{lema-Q},
\renewcommand\theequation{\thesection.\arabic{equation}*}
\begin{equation}
\begin{split}
|Q(\zeta)-(\zeta+b_0)|
&\leq \frac{b_1}{u_6}+Q_{2,max}\big(u_6)\Aeq{2.306} \\
&\ly (22-b_0)\cos\theta_1-u_6\Aeq{2.388}. \retainlabel{2.22-2.25}
\end{split}
\end{equation}
Obviously, $\varphi$ is well defined on $\overline{W_1'}$.

If $\zeta\in \OV(u_6/\cos\theta_1,\frac{13\pi}{20})$, by Lemma \ref{lema-injective}(b), we have
\renewcommand\theequation{\thesection.\arabic{equation}*}
\begin{equation}\label{0.2803-0.29}
|\arg Q'(\zeta)|\leq LogDQ_{max}(u_6)\Aeq{0.2433}\ly 0.245.
\end{equation}
Applying \eqref{equ:criterion-1} to $z_1=F(cv)$, $z_2\in\partial \V(F(cv),\frac{13\pi}{20})$ with $\im z_2>\im F(cv)$, $f=(Q|_{\MU_1})^{-1}$, $\theta=-0.245$ and $\theta'=0.245$ (note that $(Q|_{\MU_1})^{-1}(z_1)=\varphi^{-1}(cv)$), we have
\begin{equation}
-0.245<\arg\big((Q|_{\MU_1})^{-1}(z_2)-\varphi^{-1}(cv)\big)-\tfrac{13\pi}{20}<0.245.
\end{equation}
If $\zeta\in \OV(u_6/\cos\theta_1,\frac{13\pi}{20})$, by Lemma \ref{lema-esti-varphi}(f), we have
\renewcommand\theequation{\thesection.\arabic{equation}*}
\begin{equation}\label{arg-0.03}
|\arg \varphi'(\zeta)|\leq LogD\varphi_{max}(u_6)\Aeq{0.0101}\ly 0.011.
\end{equation}
Applying \eqref{equ:criterion-1} again to $\widetilde{z}_1=\varphi^{-1}(cv)$, $\widetilde{z}_2=(Q|_{\MU_1})^{-1}(z_2)$, $\widetilde{f}=\varphi$, $\widetilde{\theta}=-0.011$ and $\widetilde{\theta}'=0.011$ (note that $\varphi(\widetilde{z}_1)=cv$ and $\varphi(\widetilde{z}_2)=F^{-1}(z_2)$, where $F^{-1}=\varphi\circ (Q|_{\MU_1})^{-1}$), we have
\begin{equation}
\tfrac{11\pi}{20}<\tfrac{13\pi}{20}-0.256<\arg\big(F^{-1}(z_2)-cv\big)<\tfrac{13\pi}{20}+0.256<\tfrac{3\pi}{4}.
\end{equation}
The case $\im z_2<\im F(cv)$ can be proved completely similarly.

By a similar argument, if $z_1=F^{\circ 2}(cv)$ and $z_2\in\partial \V(F^{\circ 2}(cv),\frac{7\pi}{20})\setminus\{z_1\}$, then
\begin{equation}
\tfrac{\pi}{4}<\tfrac{7\pi}{20}-0.256<\arg\big(F^{-1}(z_2)-F(cv)\big)<\tfrac{7\pi}{20}+0.256<\tfrac{9\pi}{20}.
\end{equation}
Hence $\varphi(\overline{W_1'})\setminus\{cv, F(cv)\}\subset \V(cv,\tfrac{13\pi}{20}+0.256)\setminus\overline{\V}(F(cv),\tfrac{7\pi}{20}-0.256)\subset W_1$ and this finishes the proof of Part (a).

\medskip
(b) Since $\V(F(cv),\tfrac{7\pi}{20}-0.256)\subset\V(F(cv),\frac{13}{20}\pi)$, the corresponding estimates in Part (a) still hold. Applying \eqref{equ:criterion-1} to $z_1=F(cv)$, $z_2\in\partial \V(F(cv),\tfrac{7\pi}{20}-0.256)$ with $\im z_2>\im F(cv)$, $f=F^{-1}=\varphi\circ (Q|_{\MU_1})^{-1}$, $\theta=-0.256$ and $\theta'=0.256$, we have
\begin{equation}
\tfrac{\pi}{6}<\tfrac{7\pi}{20}-0.256-0.256<\arg\big(F^{-1}(z_2)-cv\big)<\tfrac{7\pi}{20}.
\end{equation}
The case $\im z_2<\im F(cv)$ can be proved completely similarly.
Therefore, we have $\overline{\V}(cv,\tfrac{\pi}{6})\subset\MH_1\cup\{cv\}$ and $\overline{\MH}_1\subset \V(cv,\tfrac{3\pi}{4})\cup\{cv\}$ follows by Part (a).
\end{proof}

\begin{proof}[Proof of Proposition \ref{prop-Phi}]
(a) By Lemma \ref{lema-attr}, $\Phia$ is defined in $\V(u_0,\tfrac{3\pi}{4})$, where $u_0:=u_{0,\theta_2}=9/\cos \frac{\pi}{4}\Aeq{12.72}<13$. In particular, $\Phia$ is defined in $\V(cv,\tfrac{3\pi}{4})$. Moreover, $\V(13,\tfrac{3\pi}{4})$ is contained in the immediate parabolic basin of $\infty$. Hence $\{w\in\C:\re w>1\}\subset\Phia(\V(13,\frac{3\pi}{4}))$ follows by Lemmas \ref{lema-W2} and \ref{lema-W1}.

\medskip
(b) Define $\widetilde{D}_1=Q^{-1}(D_2)\cap\MU_1$ and $\widetilde{D}_1^\pm=Q^{-1}(D_2^\pm)\cap\MU_1$. We then define
$D_1=\varphi(\widetilde{D}_1)$ and $D_1^\pm=\varphi(\widetilde{D}_1^\pm)$.
By Lemma \ref{lema-W1}(a), $D_1\cup D_1^+\cup D_1^-$ is contained in $W_1$ and satisfies \eqref{equ:PhiaD}.

By \eqref{0.2803-0.29} and \eqref{arg-0.03}, applying \eqref{equ:criterion-1} to $z_1=F(cv)$, $z_2\in\partial \V(F(cv)$, $0.785)$ with $\im z_2>\im F(cv)$, $f=F^{-1}=\varphi\circ (Q|_{\MU_1})^{-1}$, $\theta=-0.256$ and $\theta'=0.256$, we have
$-0.256<\arg\big(F^{-1}(z_2)-cv\big)-0.785<0.256$.
Then $D_1^+\subset\{z\in\C:\tfrac{\pi}{6}<\arg(z-cv)<\tfrac{3\pi}{4}\}$ since
$0.785-0.256=0.529>\tfrac{\pi}{6}$.
The case $\im z_2<\im F(cv)$ can be proved completely similarly and $D_1^-\subset\{z\in\C:\tfrac{\pi}{6}<-\arg(z-cv)<\tfrac{3\pi}{4}\}$.

\medskip
If $|\zeta-cv|\geq R_2+6$, then $|\zeta|>50$. By Lemma \ref{lema-F-cv}(b), $|F(cv)-25.5|<3$. Note that $|b_0+cv-25.5|<1.5$ and $b_1<22$. If $|\zeta|\geq 50$, by \eqref{0.3-Q2max} we have
\renewcommand\theequation{\thesection.\arabic{equation}}
\begin{equation}\label{equ:Q-zeta-est}
|Q(\zeta)-\zeta-b_0|\leq \tfrac{b_1}{50}+Q_{2,max}(50)<\tfrac{22}{50}+0.4<1.
\end{equation}
Hence we have $|Q(\zeta)-F(cv)|\geq |\zeta-cv|-|Q(\zeta)-\zeta-b_0|-|F(cv)-25.5|-|b_0+cv-25.5|\geq |\zeta-cv|-5.5>R_2$. By Lemma \ref{lema-W2}, $D_2\subset\D(F(cv),R_2)$. This implies that
\begin{equation}
\widetilde{D}_1= (Q|_{\MU_1})^{-1}(D_2)\subset (Q|_{\MU_1})^{-1}(\D(F(cv),R_2))\subset\D(cv,R_2+6).
\end{equation}
Note that $\varphi(\{\zeta\in\C:|\zeta-cv|=R_2+6\})$ is a Jordan curve contained in $\D(cv,R_2+9)$ by \eqref{equ-varphi-est}. This implies that $D_1=\varphi(\widetilde{D}_1)\subset\D(cv,R_2+9)=\D(cv,R_1)$.
\end{proof}

\section{Locating domains $D_0$, $D_0'$, $D_0''$, $D_{-1}$, $D_{-1}'''$ and $D_{-1}''''$}\label{sec-bounding-dom}

In this section, we denote $pr_\pm:=pr_\pm^{\theta_2}$ for simplicity, where $\theta_2=\frac{\pi}{4}$. Recall that $\Omega_0$ is the set defined in Lemma \ref{lema-arg-arg}.

\begin{lem}\label{lema-W0}
\begin{enumerate}
\item Let $\widetilde{W}_0:=\{\zeta:\re \zeta>cp \text{~or~} pr_+(\zeta)>6.5\text{~or~} pr_-(\zeta)>6.5\}$ and $\widetilde{W}_0'$ be the component of $Q^{-1}(\V(cv,\frac{3\pi}{4}))$ in $\MU_1$. Then $\widetilde{W}_0'\subset\widetilde{W}_0$;
\item $\varphi(\widetilde{W}_0)\subset W_0:=\{z\in\C:\re z>1.7 \text{~or~} pr_+(z)>4.2\text{~or~} pr_-(z)>4.2\}$;
\item If $\zeta\in Q^{-1}(W_0)\cap (\MU_{2+}\cup\MU_{3-}\cup\gamma_{b2})$, then $\zeta\not\in \OD(a_5,\varepsilon_5)\cup\OD(\overline{a}_5,\varepsilon_5)$;
\item $\big(Q^{-1}(W_0)\cap\MU_{123}\big)\setminus\big(\OD(\omega,\varepsilon_6)\cup\OD(\overline{\omega},\varepsilon_6)\big)\subset \Omega_0$.
\end{enumerate}
\end{lem}

See Figure \ref{Fig-W-0}. We postpone the proof of Lemma \ref{lema-W0} until the end of this section.

\begin{figure}[!htpb]
  \setlength{\unitlength}{1mm}
  \centering
  \includegraphics[width=0.96\textwidth]{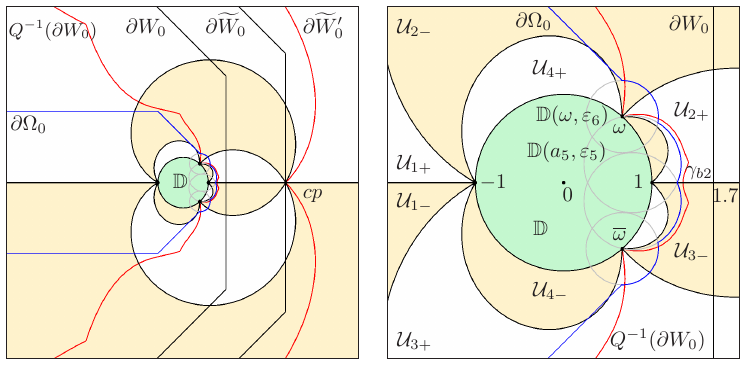}
  \caption{Left: The domains $\widetilde{W}_0'$, $\widetilde{W}_0$, $W_0$, $Q^{-1}(W_0)$, $\Omega_0$ and their boundaries; Right: A zoom of the left picture near the unit disk.}
  \label{Fig-W-0}
\end{figure}

\begin{defi}
Note that $Q$ maps each of $\MU_1$, $\MU_2$ and $\MU_3$ homeomorphically onto $\C\setminus(-\infty,cv]$. Define
\begin{align}
\widetilde{\MH}_0&=(Q|_{\MU_1})^{-1}(\MH_1), & \widetilde{D}_0&=(Q|_{\MU_1})^{-1}(D_1),  & \widetilde{D}_0^+&=(Q|_{\MU_1})^{-1}(D_1^+),\\
\widetilde{D}_0'&=(Q|_{\MU_2})^{-1}(D_1), & \widetilde{D}_0''&=(Q|_{\MU_3})^{-1}(D_1). &
\end{align}
The domain $\widetilde{\MH}_0$ and hence $\widetilde{D}_0$ and $\widetilde{D}_0^+$ are contained in $\C\setminus\overline{\mathscr{D}}_{r_1}$ because of Lemma \ref{lema-E-r1}(d) and $\overline{\MH}_1\subset\OV(cv,\frac{3\pi}{4})\subset\C\setminus\OD(0,\rho)$. The domains $\widetilde{D}_0'$ and $\widetilde{D}_0''$ are contained in $\C\setminus\overline{\mathscr{D}}_{r_1}$ by Lemma \ref{lema-D0}(b) below. Hence we can define
\begin{equation}
\MH_0=\varphi(\widetilde{\MH}_0), ~D_0=\varphi(\widetilde{D}_0),~D_0^+=\varphi(\widetilde{D}_0^+),
~D_0'=\varphi(\widetilde{D}_0'),~D_0''=\varphi(\widetilde{D}_0'').
\end{equation}
It is easy to see that $F(\MH_0)=\MH_1$ and $\Phia$ extends naturally to $\overline{\MH}_0$ such that it is a homeomorphism onto $\{w\in\C:\re w\geq 0\}$. Moreover, $\Phia(D_0)=\{w\in\C:0<\re w<1,\,|\im w|<\eta\}$ and $D_0\subset\MH_0\setminus\overline{\MH}_1$. In particular, by Lemma \ref{lema-W1}(b), we have $\OV(cv,\frac{\pi}{6})\subset\overline{\MH}_1$ and hence $D_0\cap\OV(cv,\frac{\pi}{6})=\emptyset$. By Lemma \ref{lema-W0}(a)(b), $D_0$ is contained in $W_0$ since $D_1\subset\V(cv,\frac{3\pi}{4})$. This implies that $D_0$ must be contained in $\C\setminus((-\infty,0]\cup[cv,+\infty))$.
\end{defi}

Note that $Q$ maps $\MU_{1+}\cup\,\MU_{2-}\cup\gamma_{b1}$, $\MU_{2+}\cup\,\MU_{3-}\cup\gamma_{b2}$ and $\MU_{1-}\cup\,\MU_{3+}\cup\gamma_{b3}$ homeomorphically onto the double slitted complex plane $\C\setminus((-\infty,0]\cup[cv,+\infty))$. Thus we can define
\begin{align}
\widetilde{D}_{-1}&=(Q|_{(\MU_{1+}\cup\,\MU_{2-}\cup\gamma_{b1})})^{-1}(D_0), \quad
\widetilde{D}_{-1}'''=(Q|_{(\MU_{2+}\cup\,\MU_{3-}\cup\gamma_{b2})})^{-1}(D_0), \text{ and}\\
\widetilde{D}_{-1}''''&=(Q|_{(\MU_{1-}\cup\,\MU_{3+}\cup\gamma_{b3})})^{-1}(D_0).
\end{align}
These domains are contained in $\C\setminus\overline{\mathscr{D}}_{r_1}$ by Lemma \ref{lema-D0}(b) below. Then one can define
\begin{equation}
D_{-1}=\varphi(\widetilde{D}_{-1}),\quad D_{-1}'''=\varphi(\widetilde{D}_{-1}''') \text{\quad and\quad} D_{-1}''''=\varphi(\widetilde{D}_{-1}'''').
\end{equation}
By the construction, $F$ maps each of $D_0$, $D_0'$ and $D_0''$  homeomorphically onto $D_1$, and maps each of $D_{-1}$, $D_{-1}'''$ and $D_{-1}''''$ homeomorphically onto $D_0$.

\medskip
Let $R=125$ and $R_1=108$ be the constants introduced in \S\ref{sec-definitions}.

\begin{lem}\label{lema-D0}
\textup{(a)} $\widetilde{D}_0\subset \widetilde{W}_0\cap\D(10,R_1+2)$ and $D_0\subset W_0\cap\D(10,R_1+5)$;

\textup{(b)} $\widetilde{D}_0\cup \widetilde{D}_0'\cup \widetilde{D}_0''\cup \widetilde{D}_{-1}\cup \widetilde{D}_{-1}'''\cup \widetilde{D}_{-1}''''\subset \Omega_0\cap\D(0,R_1+12)\cap\MU_{123}\cap(\C\setminus\overline{\mathscr{D}}_{r_1})$;

\textup{(c)} $D_0\cup D_0'\cup D_0''\cup D_{-1}\cup D_{-1}'''\cup D_{-1}''''\subset\D(0,R_1+15)$.
\end{lem}

\begin{proof}
(a) If $|\zeta-10|\geq R_1+2$, then $|\zeta|\geq R_1-8>50$. If $|\zeta|\geq 50$, similar to \eqref{equ:Q-zeta-est} we have $|Q(\zeta)-\zeta-b_0|<1$. Note that $|10+b_0-cv|<1$. Therefore, we have $|Q(\zeta)-cv|=|(Q(\zeta)-\zeta-b_0)+(\zeta-10)+(10+b_0-cv)|\geq |\zeta-10|-|Q(\zeta)-\zeta-b_0|-|10+b_0-cv|>R_1$. By Proposition \ref{prop-Phi}(b), $D_1\subset \D(cv,R_1)$. Therefore
\begin{equation}
\widetilde{D}_0\cup \widetilde{D}_0'\cup \widetilde{D}_0''\subset Q^{-1}(D_1)\subset Q^{-1}(\D(cv,R_1))\subset\D(10,R_1+2).
\end{equation}
Note that $\varphi(\{\zeta\in\C:|\zeta-10|=R_1+2\})$ is a Jordan curve contained in $\D(10,R_1+5)$ by \eqref{equ-varphi-est}. This implies that $D_0\cup D_0'\cup D_0''\subset\D(10,R_1+5)$. By Lemma \ref{lema-W0}(a) and (b), we have $\widetilde{D}_0\subset \widetilde{W}_0$ and $D_0\subset W_0$.

\medskip

(b) By a similar argument as Part (a), we have $\widetilde{D}_{-1}\cup\widetilde{D}_{-1}'''\cup\widetilde{D}_{-1}''''$ $\subset Q^{-1}(D_0)$ $\subset Q^{-1}(\D(10$, $R_1+5))$ $\subset\D(3,R_1+7)$ and $D_{-1}\cup D_{-1}'''\cup D_{-1}''''\subset\D(3,R_1+10)$. Let $\zeta\in\widetilde{D}_0\cup \widetilde{D}_0'\cup \widetilde{D}_0''\cup \widetilde{D}_{-1}\cup \widetilde{D}_{-1}'''\cup \widetilde{D}_{-1}''''$. By the above, we have $\zeta\in\D(10,R_1+2)\cup\D(3,R_1+7)\subset\D(0,R_1+12)$. By definition, $\zeta\in\MU_{123}$.

By Proposition \ref{prop-Phi}(b) and Lemma \ref{lema-W1}, $D_1\subset W_1\subset\V(cv,\frac{3\pi}{4})\subset W_0$. Hence we have $\zeta\in Q^{-1}(D_0\cup D_1)\subset Q^{-1}(W_0)$. Since $R_1+cv<108+17=R$, we have
\begin{equation}\label{relation-R-R1}
D_0\cup D_1\subset W_0\cap(\D(10,R_1+5)\cup\D(cv,R_1))\subset\D(0,R)\setminus\OD(0,\rho).
\end{equation}
By Lemma \ref{lema-W0}(c), we have $\zeta\not\in\OD(a_5,\varepsilon_5)\cup\OD(\overline{a}_5,\varepsilon_5)$.
By Lemma \ref{lema-E-r1}(f), we have $\zeta\in \C\setminus\overline{\mathscr{D}}_{r_1}$. By Lemmas \ref{lema-a4}(a) and \ref{lema-E-r1}(a), we have $\zeta\not\in\OD(\omega,\varepsilon_6)\cup\OD(\overline{\omega},\varepsilon_6)$. According to Lemma \ref{lema-W0}(d), this implies that $\zeta\in\Omega_0$.

\medskip
(c) By \eqref{equ-varphi-est}, the Jordan curve $\varphi(\{\zeta\in\C:|\zeta|=R_1+12\})$ is contained in $\D(0,R_1+15)$. Therefore, by Part (b), we have $D_0\cup D_0'\cup D_0''\cup D_{-1}\cup D_{-1}'''\cup D_{-1}''''\subset\D(0,R_1+15)$.
\end{proof}

\begin{proof}[Proof of Proposition \ref{prop-F}]
We only need to prove (e). Lemma \ref{lema-D0}(c) shows that $\overline{D}_0\cup \overline{D}_0'\cup \overline{D}_0''\cup \overline{D}_{-1}\cup \overline{D}_{-1}'''\cup \overline{D}_{-1}''''\setminus\{cv\}\subset\D(0,R)$ since $R_1+15=123<R=125$.

Let $\zeta\in closure(\widetilde{D}_0\cup \widetilde{D}_0'\cup \widetilde{D}_0''\cup \widetilde{D}_{-1}\cup \widetilde{D}_{-1}'''\cup \widetilde{D}_{-1}'''')\subset\overline{\MU}_{123}$. By Lemma \ref{lema-D0}(b) and Lemma \ref{lema-phi-rho}, we have $\zeta\in\C\setminus \mathscr{D}_{r_1}$ and hence $|\varphi(\zeta)|>\rho$. By Lemma \ref{lema-D0}(b) and Lemma \ref{lema-arg-arg}, we have $\zeta\in\overline{\Omega}_0$ and hence $\varphi(\zeta)\not\in\R_-$.

Let $z\in\overline{D}_0\cup \overline{D}_0'\cup \overline{D}_0''\cup \overline{D}_{-1}\cup \overline{D}_{-1}'''\cup \overline{D}_{-1}''''$. Then $F(z)\in\overline{\MH}_0$ and $0\leq\re \Phia(F(z))\leq 2$. By Lemma \ref{lema-W1}(b), if $z'\in\OV(cv,\frac{\pi}{6})\setminus\{cv\}$, we have $\re \Phia(z')>1$ and hence $\re \Phia(F(z'))>2$. Therefore, $z$ cannot lie in $\OV(cv,\frac{\pi}{6})\setminus\{cv\}$. This ends the proof of the first statement in Proposition \ref{prop-F}(e).

Finally, we need to prove that $\overline{D}_0^+\subset\pi_X(X_{1+})$. Similar to the argument in last paragraph, we have $z\not\in\overline{\mathbb{V}}(cv,\frac{\pi}{6})$ if $z\in\overline{D}_0^+$. By Proposition \ref{prop-Phi}(b) and the definition of $W_1$, we have $D_1^+\subset W_1\subset\mathbb{V}(cv,\frac{3\pi}{4})$. By Lemma \ref{lema-W0}(a) and (b), we have $\widetilde{D}_0^+\subset \widetilde{W}_0$ and $\overline{D}_0^+\subset\overline{W}_0$. Therefore, $\overline{D}_0^+\cap\overline{\D}(0,\rho)=\emptyset$. This implies that $\overline{D}_0^+\subset\pi_X(X_{1+})$.
\end{proof}

\vskip0.2cm
The rest of this section is devoted to the proof of Lemma \ref{lema-W0}.

\begin{proof}[{Proof of Lemma \ref{lema-W0}}]
(a) The boundary $\partial \widetilde{W}_0$ consists of $\ell_0^\pm:\zeta=cp\pm \ii t \,(0\leq t\leq t_0)$ and $\ell_1^\pm:\zeta=cp\pm \ii t_0+s\,e^{\pm\frac{3\pi\ii}{4}}(s\geq 0)$, where
$t_0=6.5\cdot\sqrt{2}-cp \Aeq{5.118}$. It is easy to see that the end point $\ell_0^\pm(t_0)$ coincides with the initial point $\ell_1^\pm(0)$ and one can check that $\ell_1^\pm$ are subsets of the lines $\{\zeta\in\C:pr_\pm(\zeta)=6.5\}$ respectively.

\medskip
(\textbf{Case a.1}) Take $\zeta=cp+\ii t\,(0<t\leq t_0)$ on $\ell_0^+\setminus\{cp\}$. From Lemma \ref{lema-cp-cv-Q} (see also Figure \ref{Fig-dom-Q}) we have
\begin{equation}
Q(\zeta)-cv=\frac{(\zeta-cp)^3(\zeta-cp')^3(\zeta-\nu_1)(\zeta-\overline{\nu}_1)(\zeta-\nu_2)(\zeta-\overline{\nu}_2)}{\zeta(\zeta-\omega)^4(\zeta-\overline{\omega})^4}.
\end{equation}
Then $\arg(Q(\zeta)-cv)=\frac{3}{2}\pi+\widetilde{\theta}(\zeta)$, where
\begin{align}
\widetilde{\theta}(\zeta):=
&~3\arg(\zeta-cp')+\arg(\zeta-\nu_1)+\arg(\zeta-\overline{\nu}_1)+\arg(\zeta-\nu_2)+\arg(\zeta-\overline{\nu}_2) \\
&-\arg\zeta-4\arg(\zeta-\omega)-4\arg(\zeta-\overline{\omega}).
\end{align}
We define
\begin{align}
\theta_1(t)&=3\arctan(\tfrac{t}{cp-cp'}), & \theta_2(t)&=\arctan(\tfrac{t-\im\nu_1}{cp-\re\nu_1}), & \theta_3(t)&=\arctan(\tfrac{t+\im\nu_1}{cp-\re\nu_1}), \\
\theta_4(t)&=\arctan(\tfrac{t-\im\nu_2}{cp-\re\nu_2}), & \theta_5(t)&=\arctan(\tfrac{t+\im\nu_2}{cp-\re\nu_2}), & \theta_6(t)&=\arctan(\tfrac{t}{cp}), \\
\theta_7(t)&=4\arctan(\tfrac{t-\im\omega}{cp-\re\omega}), & \theta_8(t)&=4\arctan(\tfrac{t+\im\omega}{cp-\re\omega}),
\end{align}
and
\begin{align}
\Theta(t',t''):=\theta_1(t')+\theta_2(t')+\theta_3(t')+\theta_4(t')+\theta_5(t')-\theta_6(t'')-\theta_7(t'')-\theta_8(t'').
\end{align}
For $\zeta=cp+\ii t$ with $t\geq 0$, we have $\widetilde{\theta}(\zeta)=\Theta(t,t)$. Note that $\theta_j$ increases on $[0,+\infty)$ for all $1\leq j\leq 8$. So for any $t\in[t',t'']\subset[0,+\infty)$, we have $\Theta(t',t'')\leq \widetilde{\theta}(\zeta)\leq\Theta(t'',t')$.

Define $t_{0,k}=0.5k$ for $0\leq k\leq 8$, $t_{0,k}=0.4k+0.8$ for $9\leq k\leq 10$ and $t_{0,k}=0.2k+2.8$ for $11\leq k\leq 12$. Then for $0\leq k\leq 11$ we have
\renewcommand\theequation{\thesection.\arabic{equation}*}
\begin{align}
-\tfrac{3\pi}{4}\Aeq{-2.3561}\ly\Theta(t_{0,k},t_{0,k+1})\leq \Theta(t_{0,k+1},t_{0,k})\ly\tfrac{\pi}{2}\Aeq{1.5707}. \retainlabel{1.5707-pi-2}
\end{align}
Therefore, $-\tfrac{3\pi}{4}<\widetilde{\theta}(\zeta)<\tfrac{\pi}{2}$ for $\zeta\in\ell_0^+\setminus\{cp\}$. Hence if $\zeta\in\ell_0^+\setminus\{cp\}$, then
\renewcommand\theequation{\thesection.\arabic{equation}}
\begin{equation}\label{equ:arg-zeta-0}
\tfrac{3\pi}{4}<\arg(Q(\zeta)-cv)<2\pi.
\end{equation}
By a similar calculation, we have $-2\pi<\arg(Q(\zeta)-cv)<-\tfrac{3\pi}{4}$ if $\zeta\in\ell_0^-\setminus\{cp\}$.

\medskip
(\textbf{Case a.2}) Take $\zeta=cp+ \ii t_0+s\,e^{\pm\frac{3\pi\ii}{4}}(s\geq 0)$ on $\ell_1^+$. In the following we prove that $pr_+(Q(\zeta))<pr_+(cv)$. From Lemma \ref{lema-Q}, we have
\begin{equation}
Q(\zeta)=\zeta+b_0+\frac{b_1}{\zeta}+\frac{2^4}{5^5}\cdot\frac{a_{1,1}}{(\zeta-\omega)(\zeta-\overline{\omega})}+Q_3(\zeta),
\end{equation}
where
\begin{equation}
\begin{split}
|Q_3(\zeta)|\leq Q_{3,max}(r) :=
&~ \frac{2^4}{5^5}\cdot\frac{a_{0,1}}{r(r-1)^2}
+\frac{2^6}{5^{10}}\cdot\frac{a_{1,2}r+a_{0,2}}{(r-1)^4}\\
&~ +\frac{2^{11}}{5^{14}}\cdot\frac{a_{1,3}r+a_{0,3}}{(r-1)^6}
+\frac{2^{12}}{5^{16}}\cdot\frac{a_{1,4}r+a_{0,4}}{(r-1)^8}
\end{split}
\end{equation}
for $|\zeta|\geq r>1$.

Note that $pr_+(\zeta)=6.5\cos\frac{\pi}{4}$, $pr_+(b_0)=b_0\cos\frac{\pi}{4}$ and $\arctan\big(\frac{t_0}{cp}\big)\leq \arg\zeta< \frac{3\pi}{4}$. By a numerical calculation, we have
\renewcommand\theequation{\thesection.\arabic{equation}*}
\begin{align}
\arctan\big(\tfrac{t_0}{cp}\big) \Aeq{0.8985}\gy \tfrac{\pi}{4} \Aeq{0.7853}. \retainlabel{equ:arct1}
\end{align}
Then $-\pi< \arg\Big(\frac{e^{-\ii \pi/4}}{\zeta}\Big)<-\frac{\pi}{2}$ and hence $pr_+(\frac{1}{\zeta})< 0$. Note that $\arg(\zeta-\omega)< \frac{3\pi}{4}$, $\arg(\zeta-\overline{\omega})< \frac{3\pi}{4}$ and
\renewcommand\theequation{\thesection.\arabic{equation}*}
\begin{align}
\arg(\zeta-\omega)&\geq\arctan(\tfrac{t_0-\im\omega}{cp-\re\omega})\Aeq{0.9082}\gy \tfrac{\pi}{4}\Aeq{0.7853}, \retainlabel{0.785-arg-zeta}\\
\arg(\zeta-\overline{\omega})&\geq\arctan(\tfrac{t_0+\im\omega}{cp-\re\omega})\Aeq{1.0442}\gy \tfrac{\pi}{4}.  \retainlabel{1.044-arg-zeta}
\end{align}
We have $-\frac{7\pi}{4}< \arg\Big(\frac{e^{-\ii \pi/4}}{(\zeta-\omega)(\zeta-\overline{\omega})}\Big)<-\frac{3\pi}{4}$ and $pr_+(\frac{1}{(\zeta-\omega)(\zeta-\overline{\omega})})\leq \frac{1}{(|\zeta|-1)^2}\cos \frac{\pi}{4}$.
Therefore, we have
\renewcommand\theequation{\thesection.\arabic{equation}*}
\begin{align}
pr_+(Q(\zeta))
\leq&~ \big(6.5+b_0\big)\cos\tfrac{\pi}{4}+0+\tfrac{2^4}{5^5}\tfrac{a_{1,1}}{(6.5-1)^2}\cos\tfrac{\pi}{4}+Q_{3,max}(6.5) \notag\\
&~\Aeq{10.73}\ly pr_+(cv)=cv\cos\tfrac{\pi}{4}\Aeq{11.98}. \retainlabel{equ:prQ1}
\end{align}
This implies that
\renewcommand\theequation{\thesection.\arabic{equation}}
\begin{equation}\label{equ:arg-zeta-1}
Q(\ell_1^+)\subset\{z\in\C:\tfrac{3\pi}{4}<\arg(z-cv)<\pi\}.
\end{equation}
By the symmetry of the dynamics of $Q$, we have $Q(\ell_1^-)\subset\{z\in\C:-\pi<\arg(z-cv)<-\frac{3\pi}{4}\}$.

Note that $Q$ maps $[cp,+\infty)$ homeomorphically to $[cv,+\infty)$. By \eqref{equ:arg-zeta-0} and \eqref{equ:arg-zeta-1}, we have $Q^{-1}(\V(cv,\tfrac{3\pi}{4})\cap\BH_+)\cap\MU_1\subset \widetilde{W}_0\cap\BH_+$. By the dynamical symmetry of $Q$, we have $Q^{-1}(\V(cv,\tfrac{3\pi}{4}))\cap\MU_1\subset \widetilde{W}_0$.

\medskip

(b) Suppose $\zeta\in \widetilde{W}_0$. Then $\re \zeta\geq cp$ or $pr_\pm(\zeta)\geq 6.5$.
If $\re \zeta\geq cp$, then
\renewcommand\theequation{\thesection.\arabic{equation}*}
\begin{align}
\re \varphi(\zeta)\geq cp+c_{00}-c_{01,max}-\varphi_{1,max}(cp)\Aeq{1.703}\gy 1.7. \retainlabel{equ:varphi-1}
\end{align}
If $pr_\pm(\zeta)\geq 6.5$, then
\renewcommand\theequation{\thesection.\arabic{equation}*}
\begin{align}
&~ pr_\pm(\varphi(\zeta))=pr_\pm(\zeta)+pr_\pm(c_{00})+pr_\pm(c_0-c_{00})+pr_\pm(\varphi_1(\zeta)) \notag\\
\geq &~6.5+c_{00}\cos\tfrac{\pi}{4}-c_{01,max}-\varphi_{1,max}(6.5)\Aeq{4.223}\gy 4.2.\retainlabel{equ:pmvarphi1}
\end{align}
Therefore, in both cases we have $\varphi(\zeta)\in W_0$.

\medskip
(c) Since this part and Lemma \ref{lema-a4}(b) are related to the position of $\D(a_5,\varepsilon_5)$ and $\D(\overline{a}_5,\varepsilon_5)$, we postpone the proof of this part to Appendix \ref{sec:appendix}.

\medskip

(d) Recall that $h_0=\frac{14\sqrt{6}}{25}+0.45\Aeq{1.82}$ and $\Omega_0$ are defined in Lemma \ref{lema-arg-arg}. Denote $\ell_2^\pm:\zeta=x\pm (h_0-x)\ii \,(-1\leq x\leq\re\omega)$ and $\ell_3^\pm:\zeta=x\pm (h_0+1)\ii \, (x\leq -1)$. By Part (c) and the definition of $\Omega_0$, it suffices to prove that $Q(\zeta)\in \C\setminus \overline{W}_0$ when $\zeta\in\ell_2^\pm\cup\ell_3^\pm$.

Denote $y_2^\pm(x):=\pm(h_0-x)$ and $y_3^\pm(x):=\pm(h_0+1)$.
For $j=2,3$, we consider
\begin{align}
\theta_{j,1}^\pm(x)&=6\arctan_\star\Big(\tfrac{y_j^\pm(x)}{x+1}\Big),~\theta_{j,2}^\pm(x)=4\arctan_\star\Big(\tfrac{y_j^\pm(x)}{x-1}\Big),\\
~\theta_{j,3}^\pm(x)&=\arctan_\star\Big(\tfrac{y_j^\pm(x)}{x}\Big), \\
\theta_{j,4}^\pm(x)&=4\arctan_\star\Big(\tfrac{y_j^\pm(x)-\im\omega}{x-\re\omega}\Big),\quad \theta_{j,5}^\pm(x)=4\arctan_\star\Big(\tfrac{y_j^\pm(x)+\im\omega}{x-\re\omega}\Big),
\end{align}
where $\arctan_\star(t)=\arctan(t)$ if $t\in[0,+\infty]$ and $\arctan_\star(t)=\arctan(t)+\pi$ if $t\in[-\infty,0)$, and
\renewcommand\theequation{\thesection.\arabic{equation}}
\begin{equation}\label{equ:Theta-j-pm}
\Theta_j^\pm(x,x'):=\theta_{j,1}^\pm(x')+\theta_{j,2}^\pm(x')-\theta_{j,3}^\pm(x)-\theta_{j,4}^\pm(x)-\theta_{j,5}^\pm(x).
\end{equation}
We define $\xi_{j,i}^\pm(x)$, where $1\leq i\leq 5$, $j=2,3$, and $\Xi_j^\pm(x,x')$ as in \eqref{equ:Xi-j-pm}.
Then
\begin{equation}
|Q(\zeta)|=\Xi_j^\pm(x,x) \text{\quad and \quad} \arg Q(\zeta)=\Theta_j^\pm(x,x)
\end{equation}
for $\zeta=x+y_j^\pm(x)\ii\in\ell_j^\pm$, where $j=2,3$.
Since the dynamics of $Q$ is symmetric about $\R$, we only consider $Q(\ell_2^+\cup\ell_3^+)$ and the same estimates can be obtained for $Q(\ell_2^-\cup\ell_3^-)$.

\medskip
(\textbf{Case d.1}) Take $\zeta=x+ (h_0-x)\ii\in\ell_2^+$ with $0\leq x\leq \re\omega=\tfrac{8\sqrt{6}-3}{25}\Aeq{0.6638}$. Note that $\theta_{2,i}^+(x)$ decreases on $[-1,\re\omega]$ for $1\leq i\leq 5$. Hence
\begin{equation}
\Theta_2^+(x,x')\leq \arg Q(\zeta)\leq \Theta_2^+(x',x), \quad \text{ for all }-1\leq x\leq\re\zeta\leq x'\leq \re\omega.
\end{equation}
Denote $x_{1,k}=0.04k$ for $0\leq k\leq 16$ and $x_{1,17}=\re\omega$. One can verify that for $0\leq k\leq 16$,
\renewcommand\theequation{\thesection.\arabic{equation}*}
\begin{align}
-\tfrac{5\pi}{4}&\ly\Theta_2^+(x_{1,k},x_{1,k+1})\leq\Theta_2^+(x_{1,k+1},x_{1,k})\ly -\tfrac{3\pi}{4}. \retainlabel{pi-4-d1}
\end{align}
This implies that $Q(\zeta)\in\C\setminus \overline{W}_0$ for $\zeta=x+ (h_0-x)\ii\in\ell_2^+$ for $0\leq x\leq \re\omega$.

\medskip
(\textbf{Case d.2}) Take $\zeta=x+ (h_0-x)\ii\in\ell_2^+$ with $-1\leq x\leq 0$.
Note that $\xi_{2,i}^+(x)$ decreases on $[-1,0]$ for $1\leq i\leq 5$. Therefore, we have
\begin{equation}
\Xi_2^+(x,x')\leq |Q(\zeta)|\leq \Xi_2^+(x',x), \quad \text{ for all }-1\leq x\leq\re\zeta\leq x'\leq 0.
\end{equation}
Define $x_{2,k}=-1+0.025k$ for $0\leq k\leq 4$ and $x_{2,k}=-1.1+0.05k$ for $5\leq k\leq 22$. We consider the following two functions:
\begin{equation}
\begin{split}
h_2^+(x,x') & :=\Xi_2^+(x',x)\cos \Theta_2^+(x',x), \text{ and}\\
v_2^+(x,x') & :=\Xi_2^+(x',x)\sin \Theta_2^+(x',x)-\big(h_2^+(x,x')-\tfrac{21}{5}\sqrt{2}\big).
\end{split}
\end{equation}
One can verify that for $0\leq k\leq 3$,
\renewcommand\theequation{\thesection.\arabic{equation}*}
\begin{align}
-\tfrac{\pi}{2} &\ly \Theta_2^+(x_{2,k},x_{2,k+1})\leq\Theta_2^+(x_{2,k+1},x_{2,k})\ly 0, \retainlabel{x-2-2-k-1} \\
h_2^+(x_{2,k},x_{2,k+1})&\ly 1.7 \text{\quad and\quad} -\Xi_2^+(x_{2,k+1},x_{2,k})-(1.7-\tfrac{21}{5}\sqrt{2})\gy 0. \retainlabel{x-2-2-k-2}
\end{align}
For $4\leq k\leq 21$, we have
\renewcommand\theequation{\thesection.\arabic{equation}*}
\begin{align}
-\pi\ly \Theta_2^+(x_{2,k},x_{2,k+1})&\leq\Theta_2^+(x_{2,k+1},x_{2,k})\ly -\tfrac{\pi}{4}\Aeq{-0.7853}, \quad\qquad \retainlabel{x-2-2-k-3} \\
h_2^+(x_{2,k},x_{2,k+1})&\ly 1.7 \text{\quad and\quad} v_2^+(x_{2,k},x_{2,k+1})\gy 0. \retainlabel{x-2-2-k-4}
\end{align}
This implies that $Q(\zeta)\in\C\setminus \overline{W}_0$ for $\zeta=x+ (h_0-x)\ii\in\ell_2^+$ with $-1\leq x\leq 0$.

\medskip
(\textbf{Case d.3}) Take $\zeta=x+(h_0+1)\ii\in\ell_3^+$ with $-8\leq x\leq-1$. Note that both $\theta_{3,i}^+(x)$ and $\xi_{3,i}^+(x)$ decrease in $(-\infty,-1]$ for $1\leq i\leq 5$. If $x\leq \re\zeta\leq x'\leq -1$, then
\begin{equation}
\Theta_3^+(x,x')\leq \arg Q(\zeta)\leq \Theta_3^+(x',x) \text{\quad and\quad}\Xi_3^+(x,x')\leq |Q(\zeta)|\leq \Xi_3^+(x',x).
\end{equation}
We define $x_{3,k}=-1-0.04k$ for $0\leq k\leq 20$ and $x_{3,k}=2.2-0.2k$ for $21\leq k\leq 29$. Note that $x_{3,k}>x_{3,k+1}$ for $0\leq k\leq 28$. One can verify that if $0\leq k\leq 7$, then
\renewcommand\theequation{\thesection.\arabic{equation}*}
\begin{align}
-\tfrac{\pi}{2} \ly &~\Theta_3^+(x_{3,k+1},x_{3,k})\leq \Theta_3^+(x_{3,k},x_{3,k+1})\ly 0, \retainlabel{x-3-2-k-5} \\
&~\Xi_3^+(x_{3,k},x_{3,k+1})\ly \tfrac{21}{5}\sqrt{2}-1.7\Aeq{4.2396} \text{\quad and} \retainlabel{x-3-2-k-7} \\
&~\Xi_3^+(x_{3,k},x_{3,k+1}) \cos\Theta_3^+(x_{3,k},x_{3,k+1})\ly 1.7. \retainlabel{x-3-2-k-6}
\end{align}
If $8\leq k\leq 28$, we have
\renewcommand\theequation{\thesection.\arabic{equation}*}
\begin{align}
\Xi_3^+(x_{3,k},x_{3,k+1})\ly 1.7. \retainlabel{1.7-Xi-3-x3}
\end{align}
This implies that $Q(\zeta)\in\C\setminus \overline{W}_0$ for $\zeta=x+ (h_0+1)\ii\in\ell_3^+$ with $x\in[-3.6,-1]$.

\medskip
For $30\leq k\leq 40$, we define $x_{3,k}=-(3.6+0.4(k-29))=8-0.4k$. Consider the following two functions:
\begin{equation}
\begin{split}
h_3^+(x,x') & :=\Xi_3^+(x,x')\cos \Theta_3^+(x',x), \text{ and}\\
v_3^+(x,x') & :=\Xi_3^+(x,x')\sin \Theta_3^+(x',x)-\big(\tfrac{21}{5}\sqrt{2}-h_3^+(x,x')\big).
\end{split}
\end{equation}
One can verify that if $29\leq k\leq 39$, then
\renewcommand\theequation{\thesection.\arabic{equation}*}
\begin{align}
\tfrac{\pi}{4}\Aeq{0.7853}&\ly \Theta_3^+(x_{3,k+1},x_{3,k})\leq \Theta_3^+(x_{3,k},x_{3,k+1})\ly \pi, \quad\qquad \retainlabel{x-3-2-k-33} \\
h_3^+(x_{3,k},x_{3,k+1})&\ly 1.7 \text{\quad and\quad} v_3^+(x_{3,k},x_{3,k+1})\ly 0. \retainlabel{x-3-2-k-44}
\end{align}
This implies that $Q(\zeta)\in\C\setminus \overline{W}_0$ for $\zeta=x+ (h_0+1)\ii\in\ell_3^+$ with $x\in[-8,-3.6]$.

\medskip
(\textbf{Case d.4}) Take $\zeta=x+(h_0+1)\ii\in\ell_3^+$ with $x\leq-8$. We claim that $Q(\zeta)\in\C\setminus\OV(1.6+(h_0+1)\ii,\frac{3\pi}{4})$. Denote $\widetilde{Q}(\zeta):=Q(\zeta)-(1.6+(h_0+1)\ii)$. It suffices to prove that $|\im \widetilde{Q}(\zeta)|<-\re \widetilde{Q}(\zeta)$. By Lemma \ref{lema-Q}, if $\zeta=x+(h_0+1)\ii$ with $x\leq -8$, we have $\widetilde{Q}(\zeta)=x-1.6+b_0+\tfrac{b_1}{x+(h_0+1)\ii}+Q_2(\zeta)$.
Note that $|\im\tfrac{b_1}{x+(h_0+1)\ii}|+\re\tfrac{b_1}{x+(h_0+1)\ii}<0$ since $h_0+1<|x|$. We have
\begin{align}\retainlabel{0.1202-im}
|\im \widetilde{Q}(\zeta)|+\re\widetilde{Q}(\zeta)\leq 2Q_{2,max}(8)-8-1.6+b_0\Aeq{-0.9396}\ly 0.
\end{align}
This implies that $Q(\zeta)\in\C\setminus \overline{W}_0$ for $\zeta=x+ (h_0+1)\ii\in\ell_3^+$ with $x\leq -8$  and the proof is finished.
\end{proof}

\section{Proofs of Proposition \ref{prop-E_F-P} and the Main Theorem}~\label{sec-para-renorm}

In this section, we give the proof of Proposition \ref{prop-E_F-P} and hence the Main Theorem.

\begin{proof}[{Proof of Proposition \ref{prop-E_F-P}}]
By Proposition \ref{prop-F}, the sets $\overline{D}_0$, $\overline{D}_0'$, $\overline{D}_0''$, $\overline{D}_{-1}$, $\overline{D}_{-1}'''$, $\overline{D}_{-1}''''$ and $\overline{D}_0^+$ are contained in $\pi_X(X_{1+}\cup X_{2- })$. Hence they can also be regarded as subsets of $X_{1+}\cup X_{2-}$. For $n=1,2,\ldots$, define
\begin{align}
\qquad D_{-n-1}:=&~g^n(D_{-1}),  & D_{-n}':=&~g^n(D_0'), & D_{-n}'':=&~g^n(D_0''), \\
\qquad D_{-n-1}''':=&~g^n(D_{-1}'''),  & D_{-n-1}'''':=&~g^n(D_{-1}''''), & D_{-n}^+:=&~g^n(D_0^+).
\end{align}
The Fatou coordinate $\Phia$ extends naturally to $\widetilde{\Phi}_{attr}$ on these domains and their closures. Let
\begin{equation}
\begin{split}
\SD =&~\{w\in\C:0<\re w<1 \text{ and } |\im w|<\eta\}, \quad\text{and} \\
\SD^+  =&~\{w\in\C:0<\re w<1 \text{ and } \im w>\eta\}.
\end{split}
\end{equation}
We name the boundary segments of $\SD$ and $\SD^+$ as follows (see Figure \ref{Fig-Psi0}):
\begin{align}
\qquad \partial_\pm^l\SD &= 0+[0,\pm\,\eta]\ii, & \partial_\pm^r\SD &= 1+[0,\pm\,\eta]\ii, & \partial_\pm^h\SD &= [0,1]\pm\eta\ii,\\
\qquad \partial^h\SD^+ &= \partial_+^h\SD, & \partial^l\SD^+ &= 0+[\eta,+\infty]\ii, & \partial^r\SD^+ &= 1+[\eta,+\infty]\ii.
\end{align}

\begin{figure}[!htpb]
  \setlength{\unitlength}{1mm}
  \centering
  \includegraphics[width=0.77\textwidth]{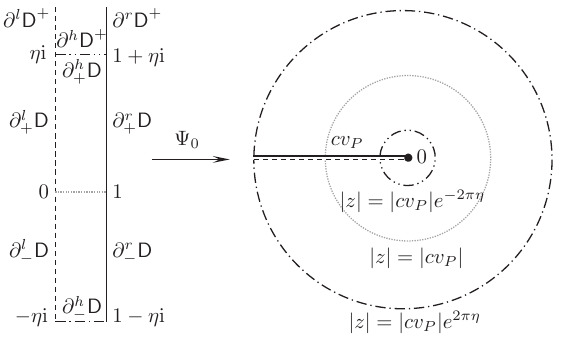}
  \caption{The domain and the range of $\Psi_0(z)=cv_P\, e^{2\pi\ii z}$. These two pictures are just topologically correct but not conformally precise.}
  \label{Fig-Psi0}
\end{figure}

Note that $\Phia(z)-1$ maps $D_1$ and $D_1^+$ homeomorphicially onto $\SD$ and $\SD^+$ including their boundaries respectively. Hence we can name the boundary segments of $D_1$ and $D_1^+$ by $\partial_+^l D_1$ and $\partial^h D_1^+$, etc., according to their images under $\Phia(z)-1$. In the following, the same naming convention will be applied to domains that are mapped homeomorphically onto $D_1$ and $D_1^+$ by the iterates of $F$ and by $Q$.

\medskip
According to the definitions of $D_n$, $D_n'$, $D_n''$, $D_{n-1}'''$, $D_{n-1}''''$ and $D_n^+$, where $n=1$, $-1$, $-2, \ldots$, we have the following relations (see Figure \ref{Fig-chessboard}):
\begin{itemize}
\item $g(D_0)=D_{-1}$ and $g(D_1^+)=D_0^+$;
\item $\overline{D}_n\cap \overline{D}_{n-1}=\partial_+^l D_n=\partial_+^r D_{n-1}$, $\overline{D}_{n-1}\cap \overline{D}_n'=\partial_-^r D_{n-1}=\partial_-^l D_n'$;
\item $\overline{D}_n'\cap\overline{D}_{n-1}'''=\partial_+^l D_n'=\partial_+^r D_{n-1}'''$, $\overline{D}_{n-1}'''\cap\overline{D}_n''=\partial_-^r D_{n-1}'''=\partial_-^l D_n''$;
\item $\overline{D}_n''\cap\overline{D}_{n-1}''''=\partial_+^l D_n''=\partial_+^r D_{n-1}''''$, $\overline{D}_{n-1}''''\cap\overline{D}_n=\partial_-^r D_{n-1}''''=\partial_-^l D_n$;

\item The following sets are equal to each other, which is a singleton:  $\overline{D}_n\cap \overline{D}_n'\cap \overline{D}_n''$, $\overline{D}_{n-1}\cap \overline{D}_{n-1}'''\cap \overline{D}_{n-1}''''$, $\overline{D}_n\cap \overline{D}_n'$, $\overline{D}_n\cap \overline{D}_{n-1}'''$, $\overline{D}_n\cap \overline{D}_n''$, $\overline{D}_{n-1}\cap \overline{D}_{n-1}'''$, $\overline{D}_{n-1}\cap \overline{D}_n''$, $\overline{D}_{n-1}\cap \overline{D}_{n-1}''''$, $\overline{D}_n'\cap \overline{D}_n''$, $\overline{D}_n'\cap \overline{D}_{n-1}''''$, $\overline{D}_{n-1}'''\cap \overline{D}_{n-1}''''$;

\item $\overline{D}_n\cap \overline{D}_n^+=\partial_+^h D_n=\partial^h D_n^+$, $\overline{D}_n^+\cap \overline{D}_{n-1}^+=\partial^l D_n^+=\partial^r D_{n-1}^+$;
\item $\overline{D}_n\cap \overline{D}_{n-1}^+=\overline{D}_{n-1}\cap \overline{D}_{n}^+ = \text{a point}$.
\end{itemize}

Define (see Figures \ref{Fig-U-eta-P-log} and \ref{Fig-dom-P}):
\begin{align}
\MU & = \MU_{1+}^P\cup\MU_{1-}^P\cup\gamma_{c1}^P,  &\MU' & = \MU_{2-}^P\cup\MU_{4+}^P\cup\gamma_{c4}^P, & \MU'' & = \MU_{3-}^P\cup\MU_{5+}^P\cup\gamma_{c5}^P,\\
\MU''' & =\MU_{2+}^P\cup\MU_{5-}^P\cup\gamma_{c2}^P,  & \MU'''' & = \MU_{3+}^P\cup\MU_{4-}^P\cup\gamma_{c3}^P.
\end{align}
It is easy to see that $P$ maps each of these $5$ domains onto $\C\setminus(-\infty,0]$. The map $\Psi_0(z)=cv_P\,e^{2\pi\ii z}$ defined in Proposition \ref{prop-E_F-P} satisfies (see Figure \ref{Fig-Psi0})
\begin{align}
  \Psi_0(\SD) & = \{z\in\C: |cv_P|\,e^{-2\pi\eta}<|z|<|cv_P|\,e^{2\pi\eta}\}\setminus(-\infty,0], \text{ and}\\
  \Psi_0(\SD^+) & = \{z\in\C: |z|<|cv_P|\,e^{-2\pi\eta}\}\setminus(-\infty,0].
\end{align}

We now define $\Psi_1$ in the interior of $D_n$ etc., as following, where $n\leq 0$:
\begin{equation}
\Psi_1:=
\left\{
\begin{aligned}
&(P|_{\MU})^{-1}\circ\Psi_0\circ\widetilde{\Phi}_{attr}  \text{~~~~~~~on~} D_n\cup D_n^+\\
&(P|_{\MU'})^{-1}\circ\Psi_0\circ\widetilde{\Phi}_{attr}  \text{~~~~~~\,on~} D_n'\\
&(P|_{\MU''})^{-1}\circ\Psi_0\circ\widetilde{\Phi}_{attr}  \text{~~~~~~on~} D_n''\\
&(P|_{\MU'''})^{-1}\circ\Psi_0\circ\widetilde{\Phi}_{attr}  \text{~~~~~\,on~} D_{n-1}'''\\
&(P|_{\MU''''})^{-1}\circ\Psi_0\circ\widetilde{\Phi}_{attr}  \text{~~~~~\,on~} D_{n-1}''''.
\end{aligned}
\right.
\end{equation}
Then $\Psi_1$ is a homeomorphism from each domain onto its image, and it extends continuously to the closure of each domain. Note that $\Psi_1$ is holomorphic in the interior of each domain since $\Psi_1$ is a branch of $P^{-1}\circ\Psi_0\circ\widetilde{\Phi}_{attr}$ on each domain. In order to prove that $\Psi_1$ is holomorphic in the whole domain, we just need to show that on the common boundary of any two domains above, the two extensions are consistent.

Now we check the consistent condition. If $z\in D_n$ tends to $\partial_+^l D_n$, then $\Psi_0\circ\widetilde{\Phi}_{attr}(z)$ tends to $[cv_P,0)=\Gamma_a^P\cup\{cv_P\}$ from the lower side. Hence $\Psi_1(z)\in\MU$ tends to $[cp_P,0)=\gamma_{a1}^P$ from the lower side (see Figures \ref{Fig-U-eta-P-log}, \ref{Fig-dom-P} and \ref{Fig-Psi0}). If $z\in D_{n-1}$ tends to $\partial_+^r D_{n-1}$ from another side, then $\Psi_0\circ\widetilde{\Phi}_{attr}(z)$ tends to $[cv_P,0)=\Gamma_a^P\cup\{cv_P\}$ from the upper side, and $\Psi_1(z)\in\MU$ tends to $[cp_P,0)=\gamma_{a1}^P$ from the upper side. The map $\Psi_1$ matches completely along $\overline{D}_n\cap \overline{D}_{n-1}=\partial_+^l D_n=\partial_+^r D_{n-1}$ since $P$ is a homeomorphism in a neighborhood of each $\gamma_{ai}^P$. It follows that $\Psi_1$ is holomorphic there. Similarly, one can check the rest easily. The map $\Psi_1$ is defined in $U=\text{the interior of }\bigcup_{n=-\infty}^0 (\overline{D}_n\cup \overline{D}_n'\cup \overline{D}_n''\cup \overline{D}_{n-1}'''\cup \overline{D}_{n-1}''''\cup \overline{D}_n^+)$. Now it is obvious that $P\circ\Psi_1=\Psi_0\circ\widetilde{\Phi}_{attr}$ and $\Psi_1: U\to U_\eta^P\setminus\{0\}=V'\setminus\{0\}$ is a surjection. The assertions (b), (c) and (d) in Proposition \ref{prop-E_F-P} now are straightforward and the Main Theorem is proved.
\end{proof}

\section{Remarks on the constants}\label{sec:rmk-cst}

The important constants are $\eta$, $\rho$, $R$ and $r_1$.
The number $\eta$ is determined by \eqref{eta-geq} and \eqref{eta-leq}. There, $\eta$ can be chosen between $3$ and $8$. By \eqref{eta-leq}, the upper bound of $\eta$ is effected by $R_2$ but actually, it is effected by $R$ since $R_2$ and $R_1$ are related by $R_2+9=R_1$ (see Lemma \ref{lema-W2}), and $R_1$, $R$ need to satisfy $R_1+cv<R$ (see \eqref{relation-R-R1}). However, $R$ cannot be too large, the upper bound of $R$ is determined by Lemma \ref{lema-E-r1}(a)(c) since $\overline{\mathscr{D}}_{r_1}$ needs to be covered by $7$ disks and as two of them, the disks $\D(\omega,\varepsilon_6)$ and $\D(\overline{\omega},\varepsilon_6)$ are affected by $R$ (the radius $\varepsilon_6$ decreases as $R$ increases). Similarly, $\rho$ cannot be too small by Lemma \ref{lema-E-r1}(b)(c) and cannot be too large by Lemma \ref{lema-phi-rho}. In fact, Lemmas \ref{lema-E-r1} and \ref{lema-phi-rho} imply that the constant $r_1$ can neither be too large nor too small.

\medskip
The choice of the domain $\Omega_0$ is also crucial. On the one hand, this domain cannot be too large since for every $\zeta\in\Omega_0$, we need to guarantee that $\varphi(\zeta)\not\in\R_-$ (see Lemma \ref{lema-arg-arg}). On the other hand, it cannot be too small since it should cover the set $\big(Q^{-1}(W_0)\cap\MU_{123}\big)\setminus\big(\OD(\omega,\varepsilon_6)\cup\OD(\overline{\omega},\varepsilon_6)\big)$ (see Lemma \ref{lema-W0}(d)). The suitable choice of the location of $\D(a_5,\varepsilon_5)$ guarantees the above properties of $\Omega_0$.

\medskip
The constant $u_{2,\theta_1}$ defined at the beginning of \S\ref{sec-Phi-attr} is determined by Lemma \ref{lema-esti-F} and also Lemma \ref{lema-injective}(c).

\appendix
\section{The position of $\D(a_4,\varepsilon_4)$ and $\D(a_5,\varepsilon_5)$}\label{sec:appendix}

In this appendix, we give the proofs of the statements of the positions of $\D(a_4,\varepsilon_4)$ and $\D(a_5,\varepsilon_5)$ in Lemma \ref{lema-a4} and Lemma \ref{lema-W0}(c). This can be observed clearly from Figure \ref{Fig-a4-a5}.

\begin{figure}[!htpb]
  \setlength{\unitlength}{1mm}
  \centering
  \includegraphics[width=0.96\textwidth]{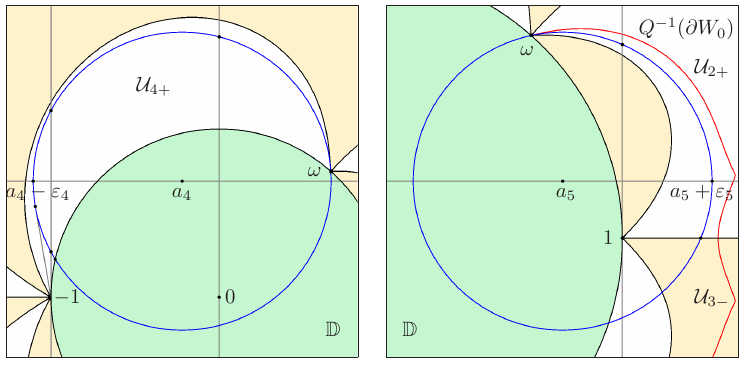}
  \caption{The position of $\D(a_4,\varepsilon_4)$ and $\D(a_5,\varepsilon_5)$, and some special points are marked. It can be seen clearly  that $\OD(a_4,\varepsilon_4)\setminus\OD\subset\MU_{4+}$, $\big(\partial\D(a_5,\varepsilon_5)\cap\BH_+\big)\setminus\OD\subset \MU_{2+}$ and $Q^{-1}(\partial W_0)\cap \MU_{2+}\cap \OD(a_5,\varepsilon_5)=\emptyset$.}
  \label{Fig-a4-a5}
\end{figure}

For $j=4,5$, let $y_j^\pm(x)$, $\xi_{j,i}^\pm(x)$, $\Xi_j^\pm(x,x')$ be defined in \eqref{equ:y-j-pm}, \eqref{equ:Xi-j-pm}, and let $\theta_{j,i}^\pm(x)$, $\Theta_j^\pm(x,x')$ be defined as in \eqref{equ:Theta-j-pm}, where $1\leq i\leq 5$. Then we have $|Q(\zeta)|=\Xi_j^\pm(x,x)$ and $\arg Q(\zeta)=\Theta_j^\pm(x,x)$ for $\zeta=x+y_j^\pm(x)\ii$, where $j=4,5$.

\begin{proof}[{Proof of Lemma \ref{lema-a4}(a)}]
Recall that $\omega=\tfrac{8\sqrt{6}-3}{25}+\tfrac{6\sqrt{6}+4}{25}\ii$. The intersection of $\partial\D$ and $\partial\D(a_4,\varepsilon_4)$ with $\varepsilon_4=|a_4-\omega|$ consists of two points, one is $\omega$ and the other is $\zeta_{4,2}^-= x_{4,2}^-+y_{4,2}^-\ii$, where
\renewcommand\theequation{\Alph{section}.\arabic{equation}*}
\begin{align}
x_{4,2}^-= &~\frac{2\,\re a_4\,\im a_4\,\im \omega +((\re a_4)^2-(\im a_4)^2)\,\re \omega}{(\re a_4)^2+(\im a_4)^2} \Aeq{-0.9742}, \retainlabel{zeta44:0.9742} \\
y_{4,2}^-= &~\frac{2\,\re a_4\,\im a_4\,\re \omega  -((\re a_4)^2-(\im a_4)^2)\,\im \omega}{(\re a_4)^2+(\im a_4)^2} \Aeq{0.2255}. \retainlabel{zeta44:0.2255}
\end{align}
The arc $\partial\D(a_4,\varepsilon_4)\setminus\D$ can be divided into $5$ subarcs:
\begin{align}
\ell_{4,p}^-(x)&:=x+y_4^-(x)\ii, \quad x_{4,p}^-\leq x< x_{4,p+1}^-,~p=0,1, \text{ and}\\
\ell_{4,q}^+(x)&:=x+y_4^+(x)\ii, \quad x_{4,q}^+\leq x< x_{4,q+1}^+,~q=0,1,2,
\end{align}
where $x_{4,0}^\pm=\re a_4-\varepsilon_4\Aeq{-1.1057}$,  $x_{4,1}^\pm=-1$, $x_{4,2}^+=0$ and $x_{4,3}^+=\re \omega$ $\Aeq{0.6638}$.
We denote
\begin{equation}
\widetilde{\Theta}_4^\pm(x,x'):=\Theta_4^\pm(x,x')+2\pi.
\end{equation}
To prove this part, it is sufficient to show that $0<\Theta_4^\pm(x,x)=\arg Q(\zeta)+2\pi<\pi$ for all $\zeta=x+y_4^\pm(x)\ii\in\ell_{4,0}^-\cup\ell_{4,1}^-\cup\ell_{4,0}^+\cup \ell_{4,1}^+\cup \ell_{4,2}^+$.

\medskip

Let $y=s_{4,1}(x+1)$ $(s_{4,1}<0)$ be the line which is tangent to $\partial \D(a_4,\varepsilon_4)$ with smaller slope. A direct calculation shows that
\renewcommand\theequation{\Alph{section}.\arabic{equation}*}
\begin{align}
s_{4,1}=&-\frac{(1+\re a_4)\im a_4+\varepsilon_4\sqrt{(1+\re a_4)^2+(\im a_4)^2-\varepsilon_4^2}}{\varepsilon_4^2-(1+\re a_4)^2} \\
&\Aeq{-5.8104} \retainlabel{5.8104-k-41}
\end{align}
and the real part of the tangent point $\widetilde{\zeta}_{4,1}$ is
\renewcommand\theequation{\Alph{section}.\arabic{equation}*}
\begin{align}
\widetilde{x}_{4,1}=\frac{\re a_4+(\im a_4) s_{4,1}-s_{4,1}^2}{1+s_{4,1}^2}\Aeq{-1.0928} \retainlabel{1.0928-k-41}
\end{align}
Let $y=s_{4,2}(x-1)$ $(s_{4,2}<0)$ be the line which is tangent to $\partial \D(a_4,\varepsilon_4)$. A direct calculation shows that
\begin{equation}
s_{4,2}=-\frac{(1-\re a_4)\im a_4+\varepsilon_4\sqrt{(1-\re a_4)^2+(\im a_4)^2-\varepsilon_4^2}}{(1-\re a_4)^2-\varepsilon_4^2}
\end{equation}
and the real part of the tangent point is
\renewcommand\theequation{\Alph{section}.\arabic{equation}*}
\begin{align}\label{equ:tilde-x-4-2}
\widetilde{x}_{4,2}=\frac{\re a_4+(\im a_4) s_{4,2}+s_{4,2}^2}{1+s_{4,2}^2}\Aeq{0.6051}.
\end{align}
Obviously, the tangent points of $\partial\D(a_4,\varepsilon_4)$ and the line passing through $\overline{\omega}$ are not contained in $\partial\D(a_4,\varepsilon_4)\setminus\D$ since $\im a_4=0.69<\im\omega\Aeq{0.7478}$.

\medskip
(\textbf{Case 1}) We first assume that $\zeta(x)=x+y_4^-(x)\ii\in\ell_{4,0}^-$ with $x\in[x_{4,0}^-,-1)$. Define $x_{4,0,1}^-=-1.096$ such that
$x_{4,0}^-\Aeq{-1.1057}<x_{4,0,1}^-<\widetilde{x}_{4,1}\Aeq{-1.0928}<x_{4,1}^-=-1$.
It is easy to see that $\arg (\zeta(x)+1)$, $\arg (\zeta(x)-1)$, $\arg \zeta(x)$, $\arg (\zeta(x)-\omega)$ and $\arg (\zeta(x)-\overline{\omega})$ increase on $[x_{4,0}^-, \widetilde{x}_{4,1}]$. Hence if $x\in[x_{4,0}^-,x_{4,0,1}^-]$, then
\renewcommand\theequation{\Alph{section}.\arabic{equation}*}
\begin{align}
\arg Q(\zeta)+2\pi & \leq \widetilde{\Theta}_4^-(x_{4,0}^-,x_{4,0,1}^-)\Aeq{3.1341} \ly \pi, \text{\quad and} \retainlabel{3.1341-argQ}\\
\arg Q(\zeta)+2\pi & \geq \widetilde{\Theta}_4^-(x_{4,0,1}^-,x_{4,0}^-)\Aeq{2.2450}\gy  0.  \retainlabel{2.2450-argQ}
\end{align}
If $x\in[x_{4,0,1}^-,-1)$, then $\tfrac{\pi}{2}< \arg (\zeta(x)+1)\leq \pi+\arctan(s_{4,1})$, $\arg (\zeta(x)-1)$, $\arg \zeta(x)$, $\arg (\zeta(x)-\omega)$ and $\arg (\zeta(x)-\overline{\omega})$ increase on $[x_{4,0,1}^-,-1)$. We have
\renewcommand\theequation{\Alph{section}.\arabic{equation}*}
\begin{align}
\arg Q(\zeta)+2\pi \leq
&~ 6\arctan(s_{4,1})+\theta_{4,2}^-(-1)-\theta_{4,3}^-( x_{4,0,1}^-) -\theta_{4,4}^-( x_{4,0,1}^-) \\
&~-\theta_{4,5}^-( x_{4,0,1}^-)+3\pi \Aeq{3.0845}\ly\pi \retainlabel{3.0845-argQ}
\end{align}
and
 \renewcommand\theequation{\Alph{section}.\arabic{equation}*}
 \begin{align}
\arg Q(\zeta)+2\pi \geq
&~ 6\cdot(-\tfrac{\pi}{2})+\theta_{4,2}^-(x_{4,0,1}^-)-\theta_{4,3}^-(-1) -\theta_{4,4}^-(-1) \\
&~-\theta_{4,5}^-(-1)+3\pi \Aeq{0.3000}\gy 0 \retainlabel{0.3000-argQ}
\end{align}
This implies that $0< \arg Q(\zeta)+2\pi<\pi$ for $\zeta\in\ell_{4,0}^-$.

\medskip
(\textbf{Case 2}) To prove that $\ell_{4,1}^-\subset\MU_{4+}$, we consider the line $y_{4,1}^-(x)=s_{4,1}(x+1)$ which is tangent to $\partial \D(a_4,\varepsilon_4)$, where $\widetilde{x}_{4,1}\leq x< -1$, and denote
\begin{align}
\Theta_{4,1}^-(x,x'):=
&~6\arctan(s_{4,1})+4\arctan\Big(\tfrac{y_{4,1}^-(x')}{x'-1}\Big)-\arctan\Big(\tfrac{y_{4,1}^-(x)}{x}\Big) \\
&-4\arctan\Big(\tfrac{y_{4,1}^-(x)-\im\omega}{x-\re\omega}\Big)-4\arctan\Big(\tfrac{y_{4,1}^-(x)+\im\omega}{x-\re\omega}\Big)+3\pi.
\end{align}
Define $x_{4,1,1}^-=-1.04$. Similar to (Case 1) we have
 \renewcommand\theequation{\Alph{section}.\arabic{equation}*}
\begin{align}
\Theta_{4,1}^-(\widetilde{x}_{4,1},x_{4,1,1}^-)&\Aeq{3.0860}\ly\pi, & \Theta_{4,1}^-(x_{4,1,1}^-,\widetilde{x}_{4,1})&\Aeq{1.1461}\gy  0, \retainlabel{1.1461-Theta-4-1}\\
\Theta_{4,1}^-(x_{4,1,1}^-,-1)&\Aeq{2.1557}\ly\pi, & \Theta_{4,1}^-(-1,x_{4,1,1}^-)&\Aeq{0.5688}\gy  0. \retainlabel{0.5688-Theta-4-1}
\end{align}
Therefore, if $\zeta$ lies on the line $y_{4,1}^-(x)=s_{4,1}(x+1)$ with $\widetilde{x}_{4,1}\leq x< -1$, then $0<\arg Q(\zeta)+2\pi<\pi$.

\medskip
(\textbf{Case 3}) Assume that $\zeta(x)=x+y_4^+(x)\ii\in\ell_{4,0}^+$ with $x\in[x_{4,0}^+,-1)$. It is easy to see that $\arg (\zeta(x)+1)$, $\arg (\zeta(x)-1)$, $\arg \zeta(x)$, $\arg (\zeta(x)-\omega)$ and $\arg (\zeta(x)-\overline{\omega})$ decrease on $[x_{4,0}^+,-1]$. Hence we have
$\widetilde{\Theta}_4^+(x,x')\leq \arg Q(\zeta)+2\pi\leq \widetilde{\Theta}_4^+(x',x)$ for any $x_{4,0}^+\leq x\leq\re\zeta\leq x'\leq -1$.
Define $x_{4,0,0}^+=x_{4,0}^+$, $x_{4,0,k}^+=-1.11+0.005k$ for $1\leq k\leq 6$ and $x_{4,0,k}^+=-1.14+0.01k$ for $7\leq k\leq 14$. One can verify that for $0\leq k\leq 13$, then
\renewcommand\theequation{\Alph{section}.\arabic{equation}*}
 \begin{align}
\widetilde{\Theta}_4^+(x_{4,0,k+1}^+,x_{4,0,k}^+)\ly\pi \text{\quad and\quad} \widetilde{\Theta}_4^+(x_{4,0,k}^+,x_{4,0,k+1}^+)\gy  0, \retainlabel{1.1210-Theta-4-1}
\end{align}
Hence we have $0<\arg Q(\zeta)+2\pi<\pi$ for $\zeta\in\ell_{4,0}^+$.

\medskip
(\textbf{Case 4}) Assume that $\zeta(x)=x+y_4^+(x)\ii\in\ell_{4,1}^+$ with $x\in[-1,0)$. Similar to (Case 3), $\arg (\zeta(x)+1)$, $\arg (\zeta(x)-1)$, $\arg \zeta(x)$, $\arg (\zeta(x)-\omega)$ and $\arg (\zeta(x)-\overline{\omega})$ decrease on $[-1,0]$. Define $x_{4,1,k}^+=-1+0.01k$ for $0\leq k\leq 20$, $x_{4,1,k}^+=-1.2+0.02k$ for $21\leq k\leq 40$ and $x_{4,1,k}^+=-2+0.04k$ for $41\leq k\leq 50$. Then one can verify that for $0\leq k\leq 49$,
\renewcommand\theequation{\Alph{section}.\arabic{equation}*}
 \begin{align}
\widetilde{\Theta}_4^+(x_{4,1,k+1}^+,x_{4,1,k}^+)\ly\pi \text{\quad and\quad} \widetilde{\Theta}_4^+(x_{4,1,k}^+,x_{4,1,k+1}^+)\gy  0, \retainlabel{0000-Theta-4-1}
\end{align}
This implies that $0<\arg Q(\zeta)+2\pi<\pi$ for $\zeta\in\ell_{4,1}^+$.

\medskip
(\textbf{Case 5}) Assume that $\zeta(x)=x+y_4^+(x)\ii\in\ell_{4,2}^+$ with $x\in[0,\re\omega)$. It is easy to see that $\arg (\zeta(x)+1)$, $\arg \zeta(x)$, $\arg (\zeta(x)-\omega)$ and $\arg (\zeta(x)-\overline{\omega})$ decrease on $[0,\re\omega)$ while $\arg (\zeta(x)-1)$ decreases on $[0,\widetilde{x}_{4,2}]$ and increase on $[\widetilde{x}_{4,2},\re\omega)$, where $\widetilde{x}_{4,2}$ is defined in \eqref{equ:tilde-x-4-2}. Define $x_{4,2,k}^+=0.05k$ for $0\leq k\leq 11$, $x_{4,2,12}^+=0.575$ and $x_{4,2,13}^+=\widetilde{x}_{4,2}$. Then one can verify that if $0\leq k\leq 12$, then
\renewcommand\theequation{\Alph{section}.\arabic{equation}*}
 \begin{align}
\widetilde{\Theta}_4^+(x_{4,2,k+1}^+,x_{4,2,k}^+)\ly\pi \text{\quad and\quad} \widetilde{\Theta}_4^+(x_{4,2,k}^+,x_{4,2,k+1}^+)\gy  0. \retainlabel{0101-Theta-4-1}
\end{align}
We define
\begin{equation}
\widehat{\Theta}_4^+(x,x'):=\theta_{4,1}^+(x')+\theta_{4,2}^+(x)-\theta_{4,3}^+(x)-\theta_{4,4}^+(x)-\theta_{4,5}^+(x)+2\pi.
\end{equation}
Hence $\widehat{\Theta}_4^+(x,x')\leq \arg Q(\zeta)+2\pi\leq \widehat{\Theta}_4^+(x',x)$ for any $\widetilde{x}_{4,2}\leq x\leq\re\zeta\leq x'<\re\omega$.
Define $x_{4,2,k}^+=0.625+0.005(k-14)$ for $14\leq k\leq 20$, $x_{4,2,k}^+=0.659+0.001(k-21)$ for $21\leq k\leq 25$ and $x_{4,2,26}^+=0.6635$. One can verify that for $13\leq k\leq 25$, then
 \renewcommand\theequation{\Alph{section}.\arabic{equation}*}
 \begin{align}
\widehat{\Theta}_4^+(x_{4,2,k+1}^+,x_{4,2,k}^+)\ly\pi \text{\quad and\quad} \widehat{\Theta}_4^+(x_{4,2,k}^+,x_{4,2,k+1}^+)\gy  0. \retainlabel{0101-Thetahat-4-1}
\end{align}
If $x\in[0.6635,\re \omega)$, then
\renewcommand\theequation{\Alph{section}.\arabic{equation}*}
\begin{align}
&~\arg Q(\zeta)+2\pi \leq
\theta_{4,1}^+(0.6635)+\theta_{4,2}^+(\re\omega)-\theta_{4,3}^+(\re\omega)\\
&\qquad -4\Big(\arctan\big(\tfrac{\im\omega-0.69}{\re\omega+0.22}\big)+\tfrac{\pi}{2}\Big)
-4\cdot \tfrac{\pi}{2}+2\pi \Aeq{3.1331}\ly\pi \retainlabel{3.1331-argQ}
\end{align}
and
 \renewcommand\theequation{\Alph{section}.\arabic{equation}*}
\begin{align}
\arg Q(\zeta)+2\pi\geq \widehat{\Theta}_4^+(0.6635,\re \omega)\Aeq{3.0939}\gy 0. \retainlabel{3.0939-argQ}
\end{align}
Hence we have $0<\arg Q(\zeta)+2\pi<\pi$ for $\zeta\in\ell_{4,2}^+$ and the proof is finished.
\end{proof}

\begin{proof}[{Proof of Lemma \ref{lema-W0}(c) and Lemma \ref{lema-a4}(b)}]
The arc $(\partial\D(a_5,\varepsilon_5)\setminus\D)\cap\BH_+$ consists of two end points, one is $\omega$ and the other is $\zeta_{5,1}^-= x_{5,1}^-$, where
\renewcommand\theequation{\Alph{section}.\arabic{equation}*}
\begin{align}
x_{5,1}^-=&~\re a_5+\sqrt{1+(\re a_5)^2-2\,\re a_5\re\omega-2\,\im a_5\,\im\omega} \\
&~\Aeq{1.2886}. \retainlabel{1.2886-x51}
\end{align}
Such arc can be divided into $3$ subarcs:
\begin{align}
\ell_{5,p}^+(x)&:=x+y_5^+(x)\ii, \quad x_{5,p}^+< x\leq x_{5,p+1}^+,~p=0,1, \\
\ell_{5,1}^-(x)&:=x+y_5^-(x)\ii, \quad x_{5,1}^-< x\leq x_{5,2}^-,
\end{align}
where $x_{5,0}^+=\re \omega\Aeq{0.6638}$, $x_{5,1}^+=1$ and $x_{5,2}^\pm=\re a_5+\varepsilon_5\Aeq{1.3302}$.
Recall that $W_0$ is defined in Lemma \ref{lema-W0}(b). By the dynamical symmetry of $Q$ about the real line, it suffices to prove that $Q(\zeta)\in (\C\setminus \overline{W}_0)\cap\BH_+$ when $\zeta\in\ell_{5,0}^+\cup\ell_{5,1}^+\cup\ell_{5,1}^-$.

\medskip
(\textbf{Case 1}) Assume that $\zeta(x)=x+y_5^+(x)\ii\in\ell_{5,0}^+$ with $x\in(\re\omega,1]$. It is easy to see that $\arg (\zeta(x)+1)$, $\arg (\zeta(x)-1)$, $\arg \zeta(x)$, $\arg (\zeta(x)-\omega)$ and $\arg (\zeta(x)-\overline{\omega})$ decrease on $(\re\omega, 1]$. Hence we have $\Theta_5^+(x,x')\leq \arg Q(\zeta)\leq \Theta_5^+(x',x)$ for any $\re\omega< x\leq\re\zeta\leq x'\leq 1$.
Define $x_{5,0,k}^+=0.66+0.03k$ for $1\leq k\leq 11$ and $x_{5,0,12}^+=1$. One can verify that for $1\leq k\leq 11$, then
\renewcommand\theequation{\Alph{section}.\arabic{equation}*}
 \begin{align}
\tfrac{3\pi}{4}\Aeq{2.3561}\ly\Theta_5^+(x_{5,0,k}^+,x_{5,0,k+1}^+)\leq \Theta_5^+(x_{5,0,k+1}^+,x_{5,0,k}^+)\ly \pi. \retainlabel{10-equ-Yan-1}
\end{align}
Moreover, if $x\in (\re\omega,x_{5,0,1}^+]$, we have
 \renewcommand\theequation{\Alph{section}.\arabic{equation}*}
\begin{align}
\arg Q(\zeta)\leq \Theta_5^+( x_{5,0,1}^+,\re\omega)\Aeq{2.7110}\ly\pi \retainlabel{2.7110-argQ}
\end{align}
and
\renewcommand\theequation{\Alph{section}.\arabic{equation}*}
\begin{align}
\arg Q(\zeta) \geq&~
\theta_{5,1}^+(x_{5,0,1}^+)+\theta_{5,2}^+(x_{5,0,1}^+)-\theta_{5,3}^+(\re\omega)-4\arctan\big(\tfrac{\re(a_5-\omega)}{\im(\omega-a_5)}\big)\\
&~ -4\cdot\tfrac{\pi}{2}\Aeq{2.3813}\gy \tfrac{3\pi}{4}\Aeq{2.3561}. \retainlabel{2.3561-argQ}
\end{align}
Hence we have $\tfrac{3\pi}{4}<\arg Q(\zeta)<\pi$ for $\zeta\in\ell_{5,0}^+$. This implies that $Q(\zeta)\in (\C\setminus \overline{W}_0)\cap\BH_+$ when $\zeta\in\ell_{5,0}^+$.

\medskip
(\textbf{Case 2}) Assume that $\zeta(x)=x+y_5^+(x)\ii\in\ell_{5,1}^+$ with $x\in(1,x_{5,2}^+]$. It is easy to see that $\arg (\zeta(x)+1)$, $\arg (\zeta(x)-1)$, $\arg \zeta(x)$, $\arg (\zeta(x)-\omega)$ and $\arg (\zeta(x)-\overline{\omega})$ decrease on $(1,x_{5,2}^+]$. Define $x_{5,1,k}^+=1+0.04k$ with $k=0,1,2$. Then one can verify that
if $k=0,1$, then
\renewcommand\theequation{\Alph{section}.\arabic{equation}*}
 \begin{align}
\tfrac{3\pi}{4}\Aeq{2.3561}\ly\Theta_5^+(x_{5,1,k}^+,x_{5,1,k+1}^+)\leq \Theta_5^+(x_{5,1,k+1}^+,x_{5,1,k}^+)\ly \pi. \retainlabel{10-equ-Yan-25}
\end{align}

Let $\iota_{5,1}$, $\iota_{5,2}$ and $\iota_{5,3}$  be the lines passing through $a_5$ and $-1$, $a_5$ and $0$, $a_5$ and $\overline{\omega}$,  with slopes $s_{5,1}=\frac{\im a_5}{\re a_5+1}$, $s_{5,2}=\frac{\im a_5}{\re a_5}$ and $s_{5,3}=\frac{\im (a_5+\omega)}{\re(a_5-\omega)}$ respectively. We use $\zeta(\widetilde{x}_{5,i})$, where $i=1,2,3$, to denote one of the points in $\iota_{5,i}\cap\partial\D(a_5,\varepsilon_5)$ with larger imaginary part. Then $\widetilde{x}_{5,1}=\re a_5+\varepsilon_5/\sqrt{1+s_{5,1}^2}\Aeq{1.3264}$, $\widetilde{x}_{5,2}=\re a_5+\varepsilon_5/\sqrt{1+s_{5,2}^2}\Aeq{1.3113}$ and $\widetilde{x}_{5,3}=\re a_5+\varepsilon_5/\sqrt{1+s_{5,3}^2}\Aeq{0.8462}$.
This implies that $|\zeta(x)+1|$ increases on $(1,\widetilde{x}_{5,1}]$ and decreases on $[\widetilde{x}_{5,1},x_{5,2}^+]$, $|\zeta(x)|$ increases on $(1,\widetilde{x}_{5,2}]$ and decreases on $[\widetilde{x}_{5,2},x_{5,2}^+]$. Moreover, $|\zeta(x)-1|$, $|\zeta(x)-\overline{\omega}|$ decrease on $(1,x_{5,2}^+]$, and $|\zeta(x)-\omega|$ increases on $(1,x_{5,2}^+]$.
Based on this, we define:
\begin{equation}
\widetilde{\Xi}_5^+(x,x'):=
\left\{
\begin{aligned}
&\frac{\xi_{5,1}^+(x')\cdot \xi_{5,2}^+(x)}{\xi_{5,3}^+(x)\cdot\xi_{5,4}^+(x)\cdot\xi_{5,5}^+(x')}, \text{\quad where } [x,x']\subset[1.08,\widetilde{x}_{5,2}], \\
&\frac{\xi_{5,1}^+(x')\cdot \xi_{5,2}^+(x)}{\xi_{5,3}^+(x')\cdot\xi_{5,4}^+(x)\cdot\xi_{5,5}^+(x')}, \text{\quad  where } [x,x']\subset[\widetilde{x}_{5,2},\widetilde{x}_{5,1}], \\
&\frac{\xi_{5,1}^+(x)\cdot \xi_{5,2}^+(x)}{\xi_{5,3}^+(x')\cdot\xi_{5,4}^+(x)\cdot\xi_{5,5}^+(x')}, \text{\quad  where } [x,x']\subset[\widetilde{x}_{5,1},x_{5,2}^+].
\end{aligned}
\right.
\end{equation}
If $[x,x']$ is contained in $[1.08,\widetilde{x}_{5,2}]$, or $[\widetilde{x}_{5,2},\widetilde{x}_{5,1}]$ or $[\widetilde{x}_{5,1},x_{5,2}^+]$, then $|Q(\zeta)|\leq \widetilde{\Xi}_5^+(x,x')$.
Consider following two functions:
\begin{equation}
\begin{split}
h_5^+(x,x') & :=\widetilde{\Xi}_5^+(x,x')\cos \Theta_5^+(x,x'), \text{ and}\\
v_5^+(x,x') & :=\widetilde{\Xi}_5^+(x,x')\sin \Theta_5^+(x,x')-\big(\tfrac{21}{5}\sqrt{2}-h_5^+(x,x')\big).
\end{split}
\end{equation}
We define $x_{5,1,k}^+=1.1+0.02(k-3)$ with $3\leq k\leq 10$, $x_{5,1,k}^+=1.25+0.01(k-11)$ with $11\leq k\leq 16$, $x_{5,1,17}^+=1.305$ and $x_{5,1,18}^+=\widetilde{x}_{5,2}$; $x_{5,1,19}^+=1.317$, $x_{5,1,20}^+=1.323$ and $x_{5,1,21}^+=\widetilde{x}_{5,1}$; $x_{5,1,22}^+=1.329$ and $x_{5,1,23}^+=x_{5,2}^+$. One can verify that if $2\leq k\leq 22$, then
\renewcommand\theequation{\Alph{section}.\arabic{equation}*}
\begin{align}
\tfrac{\pi}{4}\Aeq{0.7853}&\ly \Theta_5^+(x_{5,1,k}^+,x_{5,1,k+1}^+)\leq \Theta_5^+(x_{5,1,k+1}^+,x_{5,1,k}^+)\ly \pi, \quad\qquad \retainlabel{x-5-2-k-33} \\
h_5^+(x_{5,1,k}^+,x_{5,1,k+1}^+)&\ly 1.7 \text{\quad and\quad} v_5^+(x_{5,1,k}^+,x_{5,1,k+1}^+)\ly 0. \retainlabel{x-5-2-k-44}
\end{align}
This implies that $Q(\zeta)\in (\C\setminus \overline{W}_0)\cap\BH_+$ when $\zeta\in\ell_{5,1}^+$.

\medskip
(\textbf{Case 3}) Assume that $\zeta(x)=x+y_5^-(x)\ii\in\ell_{5,1}^-$ with $x\in(x_{5,1}^-,x_{5,2}^-]$. We first consider the points which are close to $x_{5,1}^-$. The point $Q(x_{5,1}^-)\Aeq{0.9514}$ lies on the positive axis and one can choose a univalent branch of $Q^{-1}$ in $\D(Q(x_{5,1}^-),0.7)$ since the critical values of $Q$ are $0$, $cv(\doteqdot 16.94\ldots)$ and $\infty$ (see Lemma \ref{lema-cp-cv-Q}(c)). According to Koebe's distortion theorem, it implies that $Q$ is univalent in $\D(x_{5,1}^-,r_{5,1})$ and $Q(\D(x_{5,1}^-,r_{5,1}))\subset\D(Q(x_{5,1}^-),0.7)$, where
\renewcommand\theequation{\Alph{section}.\arabic{equation}*}
\begin{align}
r_{5,1}=\tfrac{1}{4}\cdot\tfrac{0.7}{|Q'(x_{5,1}^-)|}\Aeq{0.0176}. \retainlabel{0.0176-r51}
\end{align}
Define $x_{5,1,1}^-=1.293$. Then
\renewcommand\theequation{\Alph{section}.\arabic{equation}*}
\begin{align}
x_{5,1,1}^- -x_{5,1}^-\Aeq{0.0043} & \ly \tfrac{0.0175}{\sqrt{2}}\Aeq{0.0123} \text{\quad and} \retainlabel{0.0123-x511} \\
 y_5^-(x_{5,1,1}^-)\Aeq{0.0109} & \ly \tfrac{0.0175}{\sqrt{2}}\Aeq{0.0123}. \retainlabel{0.0123-y5-x511}
\end{align}
This implies that $Q(\zeta)\in (\C\setminus \overline{W}_0)\cap\BH_+$ when $\zeta(x)=x+y_5^-(x)\ii\in\ell_{5,1}^-$ with $x\in[x_{5,1}^-,x_{5,1,1}^-]$.

\medskip
Let $\iota_{5,4}$  be the line passing through $\overline{\omega}$ and $x_{5,1}^-$  with slope $s_{5,4}=\frac{\im \omega}{\re (x_{5,1}^--\omega)}$. We claim that $\iota_{5,4}\cap\partial\D(a_5,\varepsilon_5)$ consists of two different points. Indeed, denote $\widetilde{x}_{5,4}:=1.2<x_{5,1}^-\Aeq{1.2886}$ and we have
\renewcommand\theequation{\Alph{section}.\arabic{equation}*}
\begin{align}
(\widetilde{x}_{5,4}-\re a_5)^2+(s_{5,4}(\widetilde{x}_{5,4}-x_{5,1}^-)-\im a_5)^2-\varepsilon_5^2\Aeq{-0.0264}\ly 0. \retainlabel{0.0264-x54}
\end{align}
Based on this, it is easy to see that $\arg (\zeta(x)+1)$, $\arg (\zeta(x)-1)$, $\arg \zeta(x)$, $\arg (\zeta(x)-\omega)$ and $\arg (\zeta(x)-\overline{\omega})$ increase on $[x_{5,1,1}^-,x_{5,2}^-]$. On the other hand, $|\zeta(x)+1|$, $|\zeta(x)-1|$, $|\zeta(x)|$ and $|\zeta(x)-\overline{\omega}|$ increase and $|\zeta(x)-\omega|$ decreases on $[x_{5,1,1}^-,x_{5,2}^-]$.
This implies that for any $[x,x']\subset[x_{5,1,1}^-,x_{5,2}^-]$, then $\Theta_5^-(x',x)\leq \arg Q(\zeta)\leq \Theta_5^-(x,x')$ and $\widetilde{\Xi}_5^-(x',x)\leq |Q(\zeta)|\leq \widetilde{\Xi}_5^-(x,x')$, where
\begin{equation}
\widetilde{\Xi}_5^-(x,x'):=\frac{\xi_{5,1}^-(x')\cdot \xi_{5,2}^-(x')}{\xi_{5,3}^-(x)\cdot\xi_{5,4}^-(x')\cdot\xi_{5,5}^-(x)}.
\end{equation}

Define $x_{5,1,2}^-=1.297$, $x_{5,1,k}^-=1.3+0.005(k-3)$ for $3\leq k\leq 8$, $x_{5,1,9}^-=1.327$, $x_{5,1,10}^-=1.329$  and $x_{5,1,11}^-=x_{5,2}^-$. One can verify that for $1\leq k\leq 10$, then
\renewcommand\theequation{\Alph{section}.\arabic{equation}*}
\begin{align}
0 \ly &~\Theta_5^-(x_{5,1,k+1}^-,x_{5,1,k}^-)\leq \Theta_5^-(x_{5,1,k}^-,x_{5,1,k+1}^-)\ly \pi, \retainlabel{x-55-2-k-5} \\
&~\widetilde{\Xi}_5^-(x_{5,1,k}^-,x_{5,1,k+1}^-)\ly \tfrac{21}{5}\sqrt{2}-1.7\Aeq{4.2396} \text{\quad and} \retainlabel{x-55-2-k-7} \\
&~\widetilde{\Xi}_5^-(x_{5,1,k}^-,x_{5,1,k+1}^-) \cos\Theta_5^-(x_{5,1,k+1}^-,x_{5,1,k}^-)\ly 1.7. \retainlabel{x-55-2-k-6}
\end{align}
This implies that $Q(\zeta)\in (\C\setminus \overline{W}_0)\cap\BH_+$ when $\zeta(x)=x+y_5^-(x)\ii\in\ell_{5,1}^-$ with $x\in[x_{5,1,1}^-,x_{5,2}^-]$ and the proof is finished.
\end{proof}


\section*{Acknowlegdements}

The project of this paper began in October 2014, when the author was a visiting scholar at Institut de Math\'{e}matiques de Toulouse. The author would like to thank its hospitality during his one-year visit. He is grateful to Arnaud Ch\'{e}ritat for helpful comments, Davoud Cheraghi for huge encouragements, modifications on an earlier version and helpful suggestions on the presentation of the paper, and Hiroyuki Inou for useful suggestions on the numerical calculations.
He is also very grateful to the referee for careful reading and helpful suggestions and the editor for kind comments.

\bibliographystyle{amsalpha}
\bibliography{E:/Latex-model/Ref1}

\end{document}